\newtheorem{theorem}{Theorem}[section]
\newtheorem{lemma}[theorem]{Lemma}
\theoremstyle{definition}
\newtheorem{definition}[theorem]{Definition}
\newtheorem{example}[theorem]{Example}
\newtheorem{remark}[theorem]{Remark}
\newtheorem{notation}[theorem]{Notation}
\numberwithin{equation}{section}
\newtheorem{corollary}[theorem]{Corollary}
\newcommand{\Map}{\mathbf{Map}}
\newcommand{\map}{\mathbf{map}}
\newcommand{\HOM}{\mathbf{HOM}}
\newcommand{\homs}{\mathbf{hom}}
\newcommand{\sSet}{\mathbf{sSet}}
\newcommand{\Cat}{\mathbf{Cat}}
\newcommand{\Top}{\mathbf{Top}}
\newcommand{\C}{\mathbf{C}}
\newcommand{\D}{\mathbf{D}}
\newcommand{\A}{\mathbf{A}}
\newcommand{\B}{\mathbf{B}}
\newcommand{\M}{\mathbf{M}}
\newcommand{\sCat}{\mathbf{sCat}}
\newcommand{\K}{\mathcal{K}}
\newcommand{\V}{\mathbf{V}}
\newcommand{\N}{\mathrm{N}_{\bullet}}
\newcommand{\iso}{\mathrm{iso}}
\newcommand{\sing}{\mathrm{sing}}
\newcommand{\Ho}{\mathrm{Ho}}
\newcommand{\colim}{\mathrm{colim}}
\newcommand{\Ob}{\mathrm{Ob}}
\begin{document}

\title{A Model Structure on $\Cat_{\Top}$}
\author{Amrani Ilias}
\address{Department of Mathematics, Masaryk University, Czech republic}
\email{ilias.amranifedotov@gmail.com}


\subjclass[2000]{Primary 55, 18.}

\date{April 30, 2011.}


\keywords{Model Categories, $\infty$-categories}

\begin{abstract}
In this article, we construct a cofibrantly generated Quillen model structure on the category of small
topological categories $\Cat_{\Top}$. It is Quillen equivalent to the Joyal  model structure
of $(\infty,1)$-categories and the Bergner model structure on $\Cat_{\sSet}$.    
\end{abstract}

\maketitle

\section*{Introduction}

In the section \ref{section1}, we construct a Quillen model structure on the category of small 
topological categories  $\Cat_{\Top}$ \cite{Amrani}. The main advantage is the fact that all 
objects in  $\Cat_{\Top}$ are fibrant.
We show that this model structre is Quillen equivalent to the model structure on the category
 of small simplicial categories $\Cat_{\sSet}$ defined in \cite{bergner}. \\
 Why we are interested on topological categories? In \cite{Hovey}, it is shown that any model 
 category $\M$ is naturally enriched over $\sSet$ or $\Top$. The enrichment give us a higher 
  homotopical information about $\M$. \\
  
  In the topological setting, the cohomolgy theories are defined directly from the mapping space
  in the model category of topological spectra. \\
  Our future goal is to define algebraic $\K$-theory \cite{Amrani1} for a larger class of categories.


\section{ Category of small topological categories.}\label{section1}
In this article, the category of weakly Hausdorf compactly generated topological spaces
will be denoted by $\Top$ which is simplicial monoidal model category.\\
Before to start the main theorem of this section we will introduce some notations and definitions.\\

A topological category is a category enriched over $\Top$. The Category of all (small) topological 
categories is denoted by  $\Cat_{\Top}$. The morphisms in $\Cat_{\Top}$ are the enriched functors.
It is complete and cocomplete category.
 
\begin{theorem}\label{top-cat}\cite{Amrani}
 The category $\Cat_{\Top}$ admit a cofibranty generated model structure defined as follow.\\
The weak equivalences $\mathrm{F}:\C\rightarrow\D$ satisfy the following conditions.
\begin{enumerate}
\item [WT1 : ] The morphsim $\Map_{\C}(a,b)\rightarrow\Map_{\D}(\mathrm{F}a,\mathrm{F}b)$
 is a weak equivalence in the category  $\Top.$
\item [WT2 : ] The induiced morphism $\pi_{0}\mathrm{F}:\pi_{0}\C\rightarrow\pi_{0}\D$
 is a categorical equivalence in $\Cat$. 
\end{enumerate}
The fibrations are the morphisms  $\mathrm{F}:\C\rightarrow\D$ which satisfy :
\begin{enumerate}

\item [FT1 : ] The morphism $\Map_{\C}(a,b)\rightarrow \Map_{\D}(\mathrm{F}a,\mathrm{F}b)$
 is a fibration in $\Top.$
\item [FT2 : ] For each objects $a$ and $b$ in $\C$, and a weak equivalence of homotopy\\
$e:~\mathrm{F}(a)\rightarrow b$ in $\D$, there exists an object 
 $a_{1}$ in $\C$ and a weak homotopy equivalence  $d:~a\rightarrow a_{1}$ in $\C$
such that $\mathrm{F}d=e.$ 
\end{enumerate}
More over, the set $I$ of generating cofibrations is given by :
\begin{enumerate} 
\item [CT1 : ]  $|U\partial\Delta^{n}|\rightarrow |U\Delta^{n}|$, for $n\geq 0.$
\item [CT1 : ] $\emptyset \rightarrow \{x\}$, where $\emptyset $ is the empty topological category and 
$\{x\}$ is the category with one object and one morphism.
\end{enumerate}
The set $J$ of generating acyclic cofibrations is given by:
\begin{enumerate}
\item [ACT1 : ]  $|U\Lambda^{n}_{i}|\rightarrow |U\Delta^{n}|$, for $0\leq n$ and  $0\leq i \leq n.$
\item [ACT2 : ]  $\{x\}\rightarrow |\mathcal{H}|$ where $\{\mathcal{H}\}$ as defined in \cite{bergner}.
\end{enumerate}
\end{theorem}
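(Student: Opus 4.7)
The plan is to transfer the Bergner model structure from $\Cat_{\sSet}$ to $\Cat_{\Top}$ along the adjunction
\[
|-|\colon \Cat_{\sSet} \rightleftarrows \Cat_{\Top} \colon \sing
\]
obtained by applying geometric realization and the singular complex functor hom-wise. This adjunction exists because $|-|\colon \sSet\to\Top$ is strong monoidal with respect to cartesian products (working in the $k$-space setting) while $\sing$ is lax monoidal, so both lift to functors between the categories of enriched small categories, with the lift of $|-|$ remaining left adjoint to the lift of $\sing$.

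With this adjunction in hand, I would invoke Kan's transfer theorem. Declare a map $F\colon\C\to\D$ in $\Cat_{\Top}$ to be a fibration (resp.\ weak equivalence) exactly when $\sing F$ is a fibration (resp.\ weak equivalence) in Bergner's model structure on $\Cat_{\sSet}$, and take the generating sets to be $|I_{B}|$ and $|J_{B}|$, where $I_{B}, J_{B}$ are Bergner's generators. The equalities $|I_{B}|=I$ and $|J_{B}|=J$ are immediate from inspection and match CT1--CT2, ACT1--ACT2. To identify the resulting weak equivalences and fibrations with WT1, WT2, FT1, FT2, I would use that $|-|\dashv\sing$ is a hom-wise Quillen equivalence between $\sSet$ and $\Top$: WT1 and FT1 follow directly from the Bergner conditions applied to $\sing F$; WT2 holds because $\pi_{0}\C=\pi_{0}\sing\C$ (geometric realization and singular complex induce bijections on components of mapping spaces); and FT2 is the Bergner lifting-of-equivalences condition transported along the same correspondence.

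The main obstacle lies in verifying the two hypotheses of the transfer theorem: (a) the domains of $|J|$ are small with respect to $|J|$-cell complexes, and (b) every relative $|J|$-cell complex is a weak equivalence in $\Cat_{\Top}$. Condition (a) reduces to smallness in $\Cat_{\sSet}$ using that $\sing$ preserves filtered colimits (the standard simplices $\Delta^{n}$ being compact). Condition (b) is the crucial point: for the horn-inclusion generators $|U\Lambda^{n}_{i}|\to |U\Delta^{n}|$, pushing out only modifies one hom-space by attaching the acyclic cofibration $|\Lambda^{n}_{i}|\to|\Delta^{n}|$ in $\Top$, so the argument reduces to a hom-wise check together with the observation that WT2 is unaffected; for the generator $\{x\}\to|\mathcal{H}|$ one transports Bergner's analysis of the corresponding simplicial generator across the Quillen equivalence $|-|\dashv\sing$, using that $|-|$ preserves the relevant transfinite composites of pushouts. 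A pleasant by-product, consistent with the introduction, is that every object is fibrant, since FT1 is automatic (every topological space is fibrant) and FT2 is automatic for the terminal map.
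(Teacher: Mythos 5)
Your overall strategy coincides with the paper's: transfer Bergner's structure along the hom-wise adjunction $|-|\dashv\sing$ between $\Cat_{\sSet}$ and $\Cat_{\Top}$, with generating sets the realizations of Bergner's generators, and verify smallness and the acyclicity of pushouts of the generating trivial cofibrations. The gap is precisely in the step you call ``the crucial point,'' condition (b), where your justification does not hold up. For the horn generators, it is false that pushing out $|U\Lambda^{n}_{i}|\rightarrow|U\Delta^{n}|$ along $|U\Lambda^{n}_{i}|\rightarrow\C$ ``only modifies one hom-space by attaching $|\Lambda^{n}_{i}|\rightarrow|\Delta^{n}|$'': a pushout in enriched categories freely adjoins composites, so \emph{every} hom-space $\Map_{\D}(w,z)$ changes, being an increasing union $\Map_{\D}(w,z)^{0}\subset\Map_{\D}(w,z)^{1}\subset\cdots$ in which each stage is a pushout of an inclusion $N^{k}_{\C}\subset M^{k}_{\C}$ built from alternating strings of old morphisms and new cells (this is the description of Section \ref{pushout}, following Lurie A.3.2). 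Showing each such stage is a weak equivalence is the real work; in $\Top$ it is done by exhibiting $N^{k}_{\C}\subset M^{k}_{\C}$ as a closed inclusion admitting a deformation retraction fixing the subspace and checking that such maps are preserved under pushout (Lemmas \ref{tech1} and \ref{tech2}). Your ``hom-wise check'' skips exactly this filtration-and-retraction argument, which is the technical heart of the theorem.

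The second generator has a similar problem: ``transporting Bergner's analysis across the Quillen equivalence'' only controls pushouts of $\{x\}\rightarrow|\mathcal{H}|$ along maps into categories of the form $|\sing\C|$, because $|-|$ preserves pushouts of diagrams in $\Cat_{\sSet}$; but the pushout you must handle is along an arbitrary $\{x\}\rightarrow\C$ with $\C$ an arbitrary topological category. One is forced to compare $|\mathcal{H}|\sqcup_{\{x\}}|\sing\C|$ with $|\mathcal{H}|\sqcup_{\{x\}}\C$ via the counit $|\sing\C|\rightarrow\C$, i.e.\ to prove that forming the free product with $|\mathcal{H}|$ (equivalently, pushing out along this cofibration) preserves this weak equivalence. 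That is a nontrivial left-properness-type statement, and the paper devotes Lemma \ref{tech5} to it (via the comonad resolution $F_{\bullet}\C$, Segal's good realization $\tau$, and Lemmas \ref{grapheq} and \ref{grapheq2}); nothing in your proposal substitutes for it. Until these two points are supplied, the transfer-theorem hypothesis that relative $J$-cell complexes are weak equivalences, and hence the theorem itself, remains unproved; the rest of your outline (smallness via $\sing$ preserving filtered colimits, identification of WT1--WT2, FT1--FT2, fibrancy of all objects) is fine and agrees with the paper.
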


\begin{remark}
All objects in $\Cat_{\Top}$ are \textbf{fibrant}. 
\end{remark}

\section{Proof of the main theorem}
We start by a useful lemma which gives us conditions to transfer a model structure by adjunction.
  \begin{lemma}\label{lem1}[\cite{worytkiewicz2007model}, proposition 3.4.1]
Let an adjunction $$\xymatrix{
\M \ar@<1ex>[r]^-{G} & \C\ar@<1ex>[l]^-{F}
}$$
where $\M$ is cofibrantly generated model category, with $\mathrm{I}$ generating cofibrations and $\mathrm{J}$
generating trivial cofibrations. We pose 
\begin{enumerate}
\item $\mathrm{W}$ The class of morphisms in $\C$ such the image by  $F$ is a weak equivalence in $\M$.
\item $\mathrm{F}$ The class of morphisms in $\C$ such the image by  $F$ is a fibration in $\M$.
\end{enumerate}
We suppose that the following conditions are verified:
\begin{enumerate}
\item The domain of $G (i)$ are small with respect to  $G (\mathrm{I} )$ for all $i \in \mathrm{I} $ and the domains of  $G(j)$ are small with respect to
$G(\mathrm{J} )$ for all $ j \in \mathrm{J}$.
\item The functor $F$ commutes commutes with directed colimits i.e.,  
$$F\colim (\lambda \rightarrow \C)= \colim F(\lambda\rightarrow \C).$$
\item Every transfinite composition of weak equivalences in  $\M$ is a weak equivalence.
\item The pushout of $G(j)$ by any morphism $f$ in  $\C$ is in  $\mathrm{W}.$
\end{enumerate}
Then $\C$ form a model category with weak equivalences  (resp. fibrations) $\mathrm{W}$  (resp. $\mathrm{F})$. More over it is cofibrantly generated with generating cofibrations $G(\mathrm{I})$ and generating trivial cofibrations $G(\mathrm{J})$.
 \end{lemma}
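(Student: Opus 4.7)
The plan is to invoke Kan's recognition theorem for cofibrantly generated model structures: given the cocomplete category $\C$, the two proposed sets $G(\I)$ and $G(\mathrm{J})$, and the class $\mathrm{W}$ of weak equivalences, it suffices to check that $\mathrm{W}$ has the two-out-of-three property and is closed under retracts, that the domains of $G(\I)$ and $G(\mathrm{J})$ satisfy the stated smallness condition, that every $G(\mathrm{J})$-cell complex lies in $\mathrm{W}$, and that the $G(\I)$-injectives coincide with $\mathrm{W}\cap G(\mathrm{J})\text{-inj}$.

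The formal properties of $\mathrm{W}$ are immediate: since $F$ is a functor it preserves compositions and retract diagrams, so the two-out-of-three property and closure under retracts transport from $\M$ to $\C$. The smallness requirement is exactly hypothesis (1). By the $(G,F)$-adjunction, a morphism $f$ in $\C$ has the right lifting property against $G(\I)$ (respectively $G(\mathrm{J})$) if and only if $F(f)$ has the right lifting property against $\I$ (respectively $\mathrm{J}$) in $\M$, i.e.\ is a trivial fibration (respectively a fibration). Hence $G(\I)\text{-inj}$ is exactly the class of morphisms $f$ with $F(f)$ a trivial fibration, while $G(\mathrm{J})\text{-inj}$ is the class of fibrations in $\C$; the equality $G(\I)\text{-inj}=\mathrm{W}\cap G(\mathrm{J})\text{-inj}$ then follows from the corresponding identification in $\M$.

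The essential obstacle is showing that every $G(\mathrm{J})$-cell complex belongs to $\mathrm{W}$. Write such a complex as a transfinite composition $f=\colim_\lambda f_\lambda$ whose successive stages $f_\lambda$ are pushouts in $\C$ of maps $G(j)$ with $j\in\mathrm{J}$. Hypothesis (4) says each stage belongs to $\mathrm{W}$, so $F(f_\lambda)$ is a weak equivalence in $\M$. Hypothesis (2) identifies $F(f)$ with the transfinite composition $\colim_\lambda F(f_\lambda)$, and hypothesis (3) then declares this composition to be a weak equivalence in $\M$. Combining this step with the small object argument licensed by (1) produces the two functorial factorisations required by Kan's theorem; the remaining axioms follow formally, completing the construction of the transferred model structure with generating (trivial) cofibrations $G(\I)$ and $G(\mathrm{J})$.
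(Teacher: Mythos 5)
The paper gives no proof of this lemma at all: it is quoted from the reference (Worytkiewicz et al., Proposition 3.4.1), so there is nothing internal to compare against. Your argument is correct and is essentially the standard proof of that transfer result: verify Kan's recognition theorem for $G(\I)$, $G(\mathrm{J})$ and $\mathrm{W}=F^{-1}(\text{weak equivalences})$, using the $G\dashv F$ adjunction to identify $G(\I)\text{-inj}$ and $G(\mathrm{J})\text{-inj}$ with the maps sent by $F$ to trivial fibrations and fibrations, and using hypotheses (2)--(4) to show relative $G(\mathrm{J})$-cell complexes land in $\mathrm{W}$ (the remaining inclusion $G(\mathrm{J})\text{-cell}\subseteq G(\I)\text{-cof}$, which you leave implicit, follows from $G(\I)\text{-inj}\subseteq G(\mathrm{J})\text{-inj}$; also note that cocompleteness of $\C$, which you invoke, is a tacit hypothesis of the statement).
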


We prove the main theorem using \ref{lem1}.

\begin{lemma}
The poushout of  $|U\Lambda^{n}_{i}|\rightarrow |U\Delta^{n}|$ by a morphism  $\mathrm{F}:|U\Lambda^{n}_{i}|\rightarrow \D$ is a weak equivalence.
\end{lemma}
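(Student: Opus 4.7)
The plan is to form the pushout $P := \D \cup_{|U\Lambda^n_i|} |U\Delta^n|$ in $\Cat_{\Top}$, give an explicit description of its hom-spaces as a sequential colimit of cell attachments, and then verify separately conditions WT1 and WT2 from Theorem \ref{top-cat} for the canonical morphism $\iota : \D \to P$. Since both $|U\Lambda^n_i|$ and $|U\Delta^n|$ have object set $\{x, y\}$ and the inclusion $|U\Lambda^n_i| \to |U\Delta^n|$ is the identity on objects, $\iota$ is bijective on objects. Write $a = \mathrm{F}(x)$ and $b = \mathrm{F}(y)$.

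For each pair of objects $(c, d)$ I would describe $\Map_P(c, d)$ as a sequential colimit $\colim_k X_k$ with $X_0 = \Map_{\D}(c, d)$, where each transition map is a pushout in $\Top$ of the form
\[
X_{k+1} = X_k \cup_{Y_k \times |\Lambda^n_i|} (Y_k \times |\Delta^n|),
\]
with $Y_k$ a finite product built from the hom-spaces $\Map_{\D}(c, a)$, $\Map_{\D}(b, a)$, and $\Map_{\D}(b, d)$. The intuition is that arrows in $P$ from $c$ to $d$ are alternating compositions $c \to a \to b \to a \to \cdots \to b \to d$ with the $a \to b$ steps drawn from the new cell $|\Delta^n|$ and the intervening arrows from $\D$, modulo identifying $|\Lambda^n_i| \subseteq |\Delta^n|$ with $\mathrm{F}(|\Lambda^n_i|) \subseteq \Map_{\D}(a, b)$; the index $k$ filters by the number of copies of the new cell that appear.

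To verify WT1, I use that $|\Lambda^n_i| \hookrightarrow |\Delta^n|$ is an acyclic Hurewicz cofibration in $\Top$ (it is a CW inclusion and a strong deformation retract). Products with an arbitrary compactly generated space preserve both the NDR structure and the strong deformation retraction, so each $Y_k \times |\Lambda^n_i| \to Y_k \times |\Delta^n|$ is again an acyclic Hurewicz cofibration. Pushing out preserves acyclic Hurewicz cofibrations (the deformation retraction extends across the attaching map), and a sequential composite of weak equivalences of compactly generated spaces is a weak equivalence, so $\Map_{\D}(c, d) \to \Map_P(c, d)$ is a weak equivalence in $\Top$. Condition WT2 then follows immediately: $\pi_0 \iota$ is bijective on objects and, by WT1, induces a bijection on hom-sets (since $\pi_0$ sends a weak equivalence in $\Top$ to a bijection of sets), hence is an isomorphism in $\Cat$ and in particular a categorical equivalence.

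The main obstacle is the construction of the filtration $\{X_k\}$ itself: one has to unfold the pushout of topological categories and organize the free compositional closure generated by adding the new cell $|\Delta^n|$ to $\Map(a, b)$ as an explicit sequence of cell attachments of the stated form. The bookkeeping is most delicate in the case $a = b$, where the new cell can compose with itself and contribute to every hom-space $\Map_P(c, d)$ between objects reachable from $a$ in either direction; one must choose the spaces $Y_k$ so that each $k$-th stage attaches exactly one new factor of $|\Delta^n|$ along $|\Lambda^n_i|$, without double-counting, so that the analysis in $\Top$ reduces cleanly to products with the single acyclic cofibration $|\Lambda^n_i| \hookrightarrow |\Delta^n|$.
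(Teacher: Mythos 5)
Your overall plan coincides with the paper's: the paper proves this lemma by citing Lemma \ref{tech2}, which rests on the explicit description (Section \ref{pushout}, following Lurie A.3.2) of the hom-spaces of a pushout along $U(S)\to U(T)$ as an increasing filtration $\Map_{\D}(w,z)^{0}\subset\Map_{\D}(w,z)^{1}\subset\cdots$, together with the strong-deformation-retract argument of Lemma \ref{tech1} and a (trans)finite composition of weak equivalences; your WT2 step and your use of acyclic Hurewicz cofibrations closed under products and cobase change are exactly the paper's tools. The genuine gap is in the shape you assign to the filtration, which you yourself flag as the main obstacle and leave unconstructed. The passage from stage $k$ to stage $k+1$ is \emph{not} a pushout along $Y_{k}\times|\Lambda^{n}_{i}|\to Y_{k}\times|\Delta^{n}|$ with a single new copy of the cell: it is the pushout along $N^{k+1}_{\C}\subset M^{k+1}_{\C}$, where $M^{k+1}_{\C}$ is a product of hom-spaces of $\D$ with $k+1$ copies of $T=|\Delta^{n}|$ and $N^{k+1}_{\C}$ is the subspace of tuples having \emph{at least one} coordinate in $S=|\Lambda^{n}_{i}|$ --- i.e.\ the product of hom-spaces with the $(k+1)$-fold pushout-product of the horn inclusion. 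This cannot be rearranged into cobase changes of maps of the form $Y\times S\subset Y\times T$: already for two copies, gluing $T\times T$ onto $S\times T\cup T\times S$ along $T\times S$ does not give $T\times T$ (the subset $S\times(T\setminus S)$ gets doubled), so a ``one new factor at a time'' filtration of the requested form does not exist; the strings containing $k+1$ new cells must be attached all at once along the locus where some coordinate is already old.

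The repair is exactly what the paper does, and your toolkit still suffices once aimed at the correct attaching map: since $|\Lambda^{n}_{i}|\subset|\Delta^{n}|$ is a closed inclusion admitting a strong deformation retraction, the ``at least one coordinate in $S$'' inclusion $N^{k+1}_{\C}\subset M^{k+1}_{\C}$ (a product of a space with an iterated pushout-product of this inclusion) is again a closed inclusion and a strong deformation retract; Lemma \ref{tech1} then says its cobase change $\Map_{\D}(w,z)^{k}\subset\Map_{\D}(w,z)^{k+1}$ is a homotopy equivalence, and the sequential colimit along these closed inclusions gives WT1, after which your WT2 argument (bijectivity on objects plus $\pi_{0}$ of the hom-space equivalences) goes through verbatim. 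So the argument is salvageable, but as written the key structural claim about the transition maps is false and the crucial filtration is precisely the part left unproved.
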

\begin{proof}
See \ref {tech2}
\end{proof}
\begin{lemma}
The poushout of  $\{x\}\rightarrow |\mathcal{H}|$ by $\{x\}\rightarrow\C$ is a weak equivalence for all  $\C\in \Cat_{\Top}$.
\end{lemma}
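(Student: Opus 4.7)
The plan is to describe the pushout $\C' := \C \sqcup_{\{x\}} |\mathcal{H}|$ explicitly and verify conditions WT1 and WT2 for the canonical inclusion $\iota:\C \to \C'$. On objects, $\mathrm{Ob}(\C') = \mathrm{Ob}(\C) \sqcup \{y\}$, where $y$ is the second object of $|\mathcal{H}|$ (the one not hit by $\{x\} \to |\mathcal{H}|$), and $\iota$ is the evident inclusion; so only the mapping spaces and the effect on $\pi_{0}$ remain to be analyzed. For WT2, the defining feature of $\mathcal{H}$ is that its two objects become isomorphic in $\pi_{0}|\mathcal{H}|$, so in $\pi_{0}\C'$ the new object $y$ is isomorphic to the image $c$ of $x$; hence $\pi_{0}\iota$ is essentially surjective, and full faithfulness will follow from WT1.

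For WT1, I would compute $\Map_{\C'}(a,b)$ for $a,b \in \mathrm{Ob}(\C)$ via the standard description of pushouts in enriched category theory, expressing it as a colimit indexed by zigzags
$$a \to z_{0} \to z_{1} \to \cdots \to z_{n} \to b,$$
where each intermediate $z_{i}$ is either $c$ or $y$ and the connecting arrows alternate between morphisms of $\C$ and morphisms of $|\mathcal{H}|$. Since every mapping space of $|\mathcal{H}|$ is weakly contractible, and $|\mathcal{H}|$ is cofibrant so that the corresponding attaching maps are cofibrations in $\Top$, a cellular filtration by zigzag length shows that each new piece contributes a space that collapses, via composition in $\C$ together with a chosen contraction in $|\mathcal{H}|$, to a piece of $\Map_{\C}(a,b)$. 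Passing to the colimit then yields that $\Map_{\C}(a,b) \to \Map_{\C'}(a,b)$ is a weak equivalence in $\Top$.

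The main obstacle is making the free-category-over-a-graph colimit description precise at the topological level and rigorously controlling the homotopy type of the resulting colimit. Concretely, one must exhibit the filtration as a sequence of cofibrations between compactly generated spaces, verify that each filtration quotient is contractible, and conclude by standard transfinite composition arguments that the colimit inclusion is a weak equivalence. An alternative would be to transfer the analogous statement proved by Bergner in $\Cat_{\sSet}$ using that geometric realization is a left adjoint preserving pushouts; but to keep the argument self-contained within the topological setting (and avoid circularity with the Quillen equivalence, which is the eventual goal), I would carry out the direct filtration argument here.
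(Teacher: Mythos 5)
Your reduction to WT1 is the right shape, but the proposal stops exactly where the lemma actually lives. The entire difficulty is the claim that in the word-length filtration of $\Map_{\C'}(a,b)$ each stage includes into the next by a weak equivalence, and this is only asserted: the phrases ``each new piece collapses, via composition in $\C$ together with a chosen contraction in $|\mathcal{H}|$, to a piece of $\Map_{\C}(a,b)$'' and ``verify that each filtration quotient is contractible'' are not proofs, and the second is not even the statement you need (contractibility of a cofiber does not give that the inclusion is a weak equivalence, and the successive stages are built as pushouts, not as subspaces whose quotients you control a priori). To run the direct argument you would have to (i) write down the analogue of the paper's filtration from the pushout section for a gluing along $\{x\}\rightarrow|\mathcal{H}|$, which is more involved than the $US\rightarrow UT$ case treated there because the inserted letters range over all four mapping spaces of $|\mathcal{H}|$ and you must handle composition and identities inside $\mathcal{H}$ when forming the quotient relations; (ii) show that the subspace of tuples in which some $\mathcal{H}$-letter is ``degenerate'' includes into the full tuple space not merely as a weak equivalence but as a closed inclusion admitting a deformation retraction fixing the subspace (the situation of Lemma \ref{tech1}), since otherwise you cannot conclude that the pushout defining the next filtration stage is again a weak equivalence; this uses more about $\mathcal{H}$ than ``mapping spaces are weakly contractible'' (well-pointedness of the identities and the interaction of composition in $\mathcal{H}$ with the relations matter), and your appeal to ``$|\mathcal{H}|$ is cofibrant'' does not by itself produce these retractions. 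As it stands you have reduced the lemma to an unproved claim of essentially the same difficulty, namely the topological analogue of Bergner's key lemma for pushouts along $\{x\}\rightarrow\mathcal{H}$.

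Note also that your reason for rejecting the transfer route is misplaced: the paper's proof is precisely a transfer, and it is not circular. It forms the corresponding pushout of $\sing\C$ along $\{x\}\rightarrow\mathcal{H}$ in $\Cat_{\sSet}$, where Bergner's already-established model structure gives that the resulting map $f$ is a weak equivalence; it then realizes (using that $|-|$ preserves colimits and the counit $|\sing\C|\rightarrow\C$ is a weak equivalence) and compares $|\D^{'}|$ with the actual pushout $\D$ via the free-product comparison map $m$, which is a weak equivalence by Lemma \ref{tech5}; a two-out-of-three argument then gives the result. None of this invokes the model structure on $\Cat_{\Top}$ being constructed or the eventual Quillen equivalence -- only Bergner's theorem, properties of $\sing$ and $|-|$, and the technical Lemmas \ref{tech1}--\ref{tech5}. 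If you want a self-contained topological proof instead, you must supply the deformation-retract analysis of step (ii) above in detail; otherwise the argument should be routed through $\Cat_{\sSet}$ as the paper does.
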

\begin{proof}
Let $\mathcal{O}$ the set of objects of $\C$ without the object $\{x\}$ touched by the morphism $\{x\}\rightarrow \C$. We note by  $x,~y$ objects of $|\mathcal{H}|$.
The goal is to prove that $h$ defined in the following pushout is a weak equivalence 

 $$
  \xymatrix{
   \{x\} \ar[r]\ar[d]  & \C \ar[d]^{h} \\
     |\mathcal{H}| \ar[r]& \D
  }
  $$
Observe that there is an other double pushout

 $$
  \xymatrix{
  \{x\}\sqcup \mathcal{O}\ar[rr]\ar[d] && \C\ar[d]^{i} \\
   \{x,y\}\sqcup \mathcal{O}\ar[rr]\ar[d]  && \C\sqcup\{y\} \ar[d]^{h^{'}} \\
     |\mathcal{H}|\sqcup \mathcal{O} \ar[rr]&& \D.
  }
  $$
  Which is a consequence of: 
  $$ |\mathcal{H}|\sqcup \mathcal{O}\bigsqcup_{\mathcal{O}\sqcup \{x,y\}} \C\sqcup\{y\}= |\mathcal{H}|\bigsqcup_{\{x,y\}} \C\sqcup\{y\}=|\mathcal{H}|\bigsqcup_{\{x\}} \C=\D.$$
 The morphism $h^{'}$ is a natural extension  of $h$, i.e., $h^{'}\circ i= h$.

  On the other hand, the counity $ c: |\sing\C|\rightarrow \C $ is a weak equivalence. Consider the following pushout  in $\Cat_{\sSet}$:
    $$
  \xymatrix{
   \{x\}\sqcup \mathcal{O}\ar[r]\ar[d] & \sing\C \ar[d]^{i} \\
   \{x,y\}\sqcup \mathcal{O}\ar[r]\ar[d]  &  \sing(\C)\sqcup\{y\} \ar[d]_{f^{'}} \\
     \mathcal{H}\sqcup \mathcal{O} \ar[r]& \D^{'}.
  }
  $$
Since $\Cat_{\sSet}$ is a model category, we have that $f=f^{'}\circ i$ is a weak equivalence. Consequently $|f|$ is a weak equivalence in $\Cat_{\Top}$ .\\
As before  $f^{'}$ is an extension of $f$.

    Using the fact that the functor  $|-|$ commutes with colimits, the diagram of the following double pushout  permit to  conclude:
   $$
  \xymatrix{
                                                                   &&   |\sing\C|\ar[rr]^{\sim}\ar[d]^{i} && \C\ar[d]^{i} \\
   \{x,y\}\sqcup \mathcal{O}\ar[rr]\ar[d]  &&  |\sing(\C\sqcup\{y\})| \ar[d]_{|f^{'}|} \ar[rr]^{c} && \C \sqcup\{y\}\ar[d]_{h^{'}}\\
     |\mathcal{H}|\sqcup \mathcal{O} \ar[rr]&& |\D^{'}|\ar[rr]_{m} && \D.
  }
  $$
 In Fact, 
  $$m:\D=(|\mathcal{H}|\sqcup \mathcal{O}) \star|\sing(\C\sqcup\{y\})|\rightarrow ( |\mathcal{H}|\sqcup \mathcal{O}) \star (\C\sqcup\{y\})=\D^{'}$$
is a weak equivalence by \ref{tech5}. We have seen that  $|f|$ is a weak equivalence, so by the property  "2 out of 3" we conclude that $h$ is a weak equivalence.
\end{proof}
\begin{lemma}\label{cattop1}
The functor $\sing$ commutes with directed colimites. 
\end{lemma}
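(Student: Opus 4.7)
The plan is to split the claim into two pieces: (i) a \emph{pointwise} formula for directed colimits of enriched categories, valid both in $\Cat_{\Top}$ and in $\Cat_{\sSet}$, and (ii) the fact that the singular functor $\sing\colon\Top\to\sSet$ commutes with directed colimits of spaces. Together these two ingredients express $\sing(\colim_\lambda \C_\lambda)$ and $\colim_\lambda \sing(\C_\lambda)$ by the same formula.

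First I would record how a directed diagram $\lambda\mapsto\C_\lambda$ in $\Cat_{\Top}$ is colimited. The set of objects of $\C=\colim_\lambda\C_\lambda$ is $\colim_\lambda\Ob(\C_\lambda)$, and for $a,b\in\Ob(\C)$ represented in some $\C_{\lambda_0}$ one has
$$\Map_{\C}(a,b) \;=\; \colim_{\lambda\geq \lambda_0}\Map_{\C_\lambda}(a,b)$$
in $\Top$. This is the standard description of directed colimits in an enrichment: composition and units live at some finite stage because the indexing is directed, and directed colimits in $\Top$ commute with finite products, so the right-hand side assembles unambiguously into a topologically enriched category with the correct universal property. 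The identical recipe describes $\colim_\lambda \sing(\C_\lambda)$ inside $\Cat_{\sSet}$.

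Next, I would apply $\sing$ to the formula above. On objects $\sing$ is the identity, so the object sets match. On mapping objects one must check that
$$\sing\bigl(\colim_{\lambda\geq\lambda_0}\Map_{\C_\lambda}(a,b)\bigr) \;=\; \colim_{\lambda\geq\lambda_0}\sing\bigl(\Map_{\C_\lambda}(a,b)\bigr),$$
and this reduces, degree by degree, to $\Top(\Delta^n,\colim_\lambda X_\lambda)=\colim_\lambda\Top(\Delta^n,X_\lambda)$, which holds because each $\Delta^n$ is compact in the category of weakly Hausdorff compactly generated spaces. Combining this with the pointwise description yields the desired identification, compatibly with the structural maps into the colimit.

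The main obstacle is the pointwise formula of the first step: for a general small diagram of enriched categories the colimit is \emph{not} computed pointwise on mapping objects, and one must introduce free compositions and impose relations. Directedness of the indexing is what rescues the situation, since every composable pair and every unit is already present in a single $\C_\lambda$; once this point is nailed down carefully, the remainder is a direct appeal to the compactness of $\Delta^n$.
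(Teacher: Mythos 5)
Your proposal is correct and follows essentially the same route as the paper: describe the directed colimit in $\Cat_{\Top}$ pointwise (objects as a colimit of object sets, mapping spaces as directed colimits in $\Top$ once the objects are represented at some stage), then reduce to the fact that $\sing\colon\Top\to\sSet$ preserves directed colimits. Your added justifications (directedness placing compositions and units at a finite stage, and compactness of $\Delta^{n}$ giving $\Top(\Delta^{n},\colim X_\lambda)=\colim\Top(\Delta^{n},X_\lambda)$) only make explicit what the paper asserts.
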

\begin{proof}
Let $\lambda$ be an ordinal and let 
$$\C=\colim_{\lambda}\C_{\lambda} ,$$ 
a directed colimit in $\Cat_{\Top}$. If $a^{'}$ and $b^{'}$ are two objects in $\C$, then by definition, there exists  an index $t$ such that they are repretented 
by  $a, ~b\in \C_{t},$ and $\Map_{\C}(a^{'},b^{'})$ is a colimit of the following diagram:
$$\Map_{\C^{a,b}_{t}}(a,b)\rightarrow \dots \Map_{\C_{s}}(a_s,b_s)\rightarrow \Map_{\C_{s+1}}(a_{s+1},b_{s+1})\rightarrow\dots...$$
where $\C^{a,b}_{t}$ is a full subcategory of $\C_{t}$ with only two objects $a,~b$
Since the functor $\Ob: \Cat\rightarrow \mathbf{Set}$ and the functor  $\sing: \Top\rightarrow\sSet$ commute with  directed colimits, we have that 
$\sing:\Cat_{\Top}\rightarrow\Cat_{\sSet}$ commutes with directed colimites .
\end{proof}
\begin{lemma}
The objects $ |U\Lambda^{n}_{i}|,~ |U\Delta^{n}|$ and $|\mathcal{H}|$ are small in $\Cat_{\Top}$
\end{lemma}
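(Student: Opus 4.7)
The plan is to reduce smallness in $\Cat_{\Top}$ to smallness in $\Cat_{\sSet}$ via the adjunction $|{-}| : \Cat_{\sSet} \rightleftarrows \Cat_{\Top} : \sing$ together with Lemma \ref{cattop1}. Notice that each of the objects in question has the form $|Y|$ where $Y$ is one of $U\Delta^n$, $U\Lambda^n_i$ or $\mathcal{H}$ in $\Cat_{\sSet}$.

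First, I would fix an ordinal $\lambda$ and a $\lambda$-sequence $\C_0 \to \C_1 \to \cdots$ in $\Cat_{\Top}$ with colimit $\C = \colim_\lambda \C_\alpha$, and compute
\[
\Hom_{\Cat_{\Top}}(|Y|, \C) \;\cong\; \Hom_{\Cat_{\sSet}}(Y, \sing \C)
\]
by the adjunction. By Lemma \ref{cattop1}, $\sing$ commutes with directed colimits, so $\sing \C \cong \colim_\lambda \sing \C_\alpha$. Hence the right-hand side becomes $\Hom_{\Cat_{\sSet}}(Y, \colim_\lambda \sing \C_\alpha)$. If $Y$ is small in $\Cat_{\sSet}$, this in turn equals $\colim_\lambda \Hom_{\Cat_{\sSet}}(Y, \sing \C_\alpha)$, and applying the adjunction in reverse gives $\colim_\lambda \Hom_{\Cat_{\Top}}(|Y|, \C_\alpha)$, which is exactly the smallness condition for $|Y|$.

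It therefore suffices to establish that $U\Delta^n$, $U\Lambda^n_i$ and $\mathcal{H}$ are small in $\Cat_{\sSet}$. Each has a finite object set, and their mapping simplicial sets have only finitely many non-degenerate simplices (for $U\Delta^n$ and $U\Lambda^n_i$ this follows from the explicit description of the free simplicial category on a simplicial set with finitely many simplices, and for $\mathcal{H}$ it is built into Bergner's construction). A morphism out of such a simplicial category into a colimit is therefore determined by finitely many pieces of data, each of which factors through some stage $\C_\alpha$; taking the maximum of finitely many indices one sees that the whole morphism factors. This is the standard smallness argument in $\Cat_{\sSet}$.

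The main obstacle is actually just bookkeeping: verifying carefully that a morphism $Y \to \colim_\lambda \sing \C_\alpha$ does factor through some $\sing \C_\alpha$. Since $Y$ has finitely many objects and each mapping space in $Y$ has finitely many non-degenerate simplices, each of the (finitely many) defining pieces of data lives in the colimit of sets/simplicial sets $\Map_{\sing \C_\alpha}(-,-)$, and so factors at some finite stage; the directedness of $\lambda$ lets us synchronize these finitely many stages. Once this is made precise, the adjunction chain above closes the argument.
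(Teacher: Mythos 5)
Your argument is correct and is essentially the paper's own proof: transfer smallness along the adjunction $|-|\dashv \sing$, use Lemma \ref{cattop1} that $\sing$ commutes with directed colimits, and invoke smallness of $U\Delta^{n}$, $U\Lambda^{n}_{i}$ and $\mathcal{H}$ in $\Cat_{\sSet}$. The only minor inaccuracy is that Bergner's $\mathcal{H}$ has countably many (not finitely many) simplices in its mapping spaces, but this does not affect the conclusion, since smallness is only required with respect to a sufficiently large cardinal.
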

\begin{proof}
It is a consequence of  the fact that $ U\Lambda^{n}_{i},~ U\Delta^{n}$ , $\mathcal{H}$ are small in  $\Cat_{\sSet}$ and 
$\sing:\Cat_{\Top}\rightarrow\Cat_{\sSet}$ commutes with directed colimits.
\end{proof}
\begin{lemma}
The transfinite composition of weak equivalences in  $\Cat_{\sSet}$ is a weak equivalences.
\end{lemma}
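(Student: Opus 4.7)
The plan is to verify the two defining conditions of a weak equivalence in $\Cat_{\sSet}$ for a transfinite composition, by reducing each to a filtered-colimit stability statement. Let $\C_0 \to \C_1 \to \cdots$ be a $\lambda$-sequence of weak equivalences in $\Cat_{\sSet}$ with colimit $\C$. The successor steps are handled by the 2-out-of-3 property, so the content lies at limit ordinals: for $\beta$ a limit ordinal it suffices to show that $\C_0 \to \C_\beta = \colim_{\mu < \beta} \C_\mu$ satisfies the $\sSet$-analogs of WT1 and WT2.

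First I would establish the simplicial version of Lemma \ref{cattop1}: in a directed colimit in $\Cat_{\sSet}$ the object set is $\colim_\mu \Ob(\C_\mu)$ (since $\Ob : \Cat \to \mathrm{Set}$ preserves directed colimits), and for a pair of objects $a, b$ represented in some $\C_t$ the mapping space satisfies
$$\Map_{\C_\beta}(a, b) = \colim_{t \leq s < \beta}\Map_{\C_s}(a_s, b_s),$$
by exactly the argument used in Lemma \ref{cattop1}. Given this, for $a, b \in \C_0$ the induced map $\Map_{\C_0}(a,b) \to \Map_{\C_\beta}(a,b)$ is a filtered colimit of weak equivalences of simplicial sets, hence itself a weak equivalence, which yields the analog of WT1.

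For the analog of WT2, observe that $\pi_0 : \Cat_{\sSet} \to \Cat$ is computed by applying $\pi_0 : \sSet \to \mathrm{Set}$ to every mapping space; since $\pi_0$ on simplicial sets commutes with filtered colimits, the preceding description yields $\pi_0 \C_\beta = \colim_{\mu < \beta} \pi_0 \C_\mu$, so $\pi_0 \C_0 \to \pi_0 \C_\beta$ is a transfinite composition of categorical equivalences in $\Cat$. The main obstacle is then a clean verification that categorical equivalences are stable under directed colimits in $\Cat$: essential surjectivity and full faithfulness reduce respectively to filtered colimits of surjections and bijections of sets, both of which are preserved. Combining the two conclusions yields the lemma.
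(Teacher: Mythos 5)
Your argument is correct and follows essentially the same route as the paper: reduce WT1 to closure of weak equivalences of simplicial sets under filtered (transfinite) colimits, using the mapping-space description of directed colimits in $\Cat_{\sSet}$ as in Lemma \ref{cattop1}, and reduce WT2 to closure of categorical equivalences in $\Cat$, using that $\pi_{0}$ commutes with the relevant colimits. The only cosmetic difference is that you justify the $\pi_{0}$ compatibility levelwise via $\pi_{0}$ of simplicial sets preserving filtered colimits, whereas the paper invokes that $\pi_{0}\colon\Cat_{\sSet}\to\Cat$ is a left adjoint.
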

\begin{proof}
It is a consequence that the transfinite composition of weak equivalences in  $\sSet$ and $\Cat$ is a weak equivalence.
Note that $\pi_{0}:\Cat_{\sSet}\rightarrow \Cat$ commutes with colimits because it admit a right adjoint:  the Functor which correspond to each topological enriched category  $\C$ an trivially enriched category i.e., we forget the topology of $\C$. 
\end{proof}
\begin{corollary}
The category  $\Cat_{\Top}$ is a cofibrantly generated model category Quillen equivalent to $\Cat_{\sSet}$.
\end{corollary}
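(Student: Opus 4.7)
The plan is to invoke Lemma \ref{lem1} for the adjunction
$$\xymatrix{\Cat_{\sSet} \ar@<1ex>[r]^-{|-|} & \Cat_{\Top} \ar@<1ex>[l]^-{\sing}}$$
with $\M = \Cat_{\sSet}$ (the Bergner model structure of \cite{bergner}), $G = |-|$ and $F = \sing$. The four hypotheses of the lemma have been verified one by one in the previous statements: the preceding smallness lemma guarantees that the domains of $G(\I)$ and $G(\J)$ are small with respect to their images, Lemma \ref{cattop1} shows $\sing$ commutes with directed colimits (hypothesis (2)), the transfinite composition lemma gives (3), and the two pushout lemmas for generators of the form $|U\Lambda^n_i|\to|U\Delta^n|$ and $\{x\}\to|\mathcal{H}|$ together give (4). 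Applying Lemma \ref{lem1} therefore produces a cofibrantly generated model structure on $\Cat_{\Top}$.

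Next I would check that the transferred structure coincides with the one described in Theorem \ref{top-cat}. Because $\sing:\Top\to\sSet$ both detects weak equivalences and detects Kan fibrations (Quillen equivalence at the enriching level), a map $\mathrm{F}:\C\to\D$ in $\Cat_{\Top}$ satisfies WT1 (resp.\ FT1) if and only if $\sing\mathrm{F}$ satisfies the analogous mapping-space condition in $\Cat_{\sSet}$. Similarly, since $\pi_0\sing = \pi_0$ on topological categories, the essential-surjectivity conditions WT2 and FT2 correspond exactly to the Bergner conditions on $\sing\mathrm{F}$. Thus the transferred weak equivalences and fibrations are precisely those of Theorem \ref{top-cat}, and the sets $G(\I) = |\I|$, $G(\J) = |\J|$ give back the generators CT1--CT2 and ACT1--ACT2.

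For the Quillen equivalence, I would observe that by construction $|-| \dashv \sing$ is already a Quillen pair (the left adjoint sends generating (acyclic) cofibrations to generating (acyclic) cofibrations). It remains to check that for every cofibrant $\C\in\Cat_{\sSet}$ and every $\D\in\Cat_{\Top}$ (which is automatically fibrant by the remark), a map $\C\to\sing\D$ is a weak equivalence in $\Cat_{\sSet}$ if and only if its adjoint $|\C|\to\D$ is a weak equivalence in $\Cat_{\Top}$. By the two-out-of-three property this reduces to showing that the counit $c:|\sing\D|\to\D$ is a weak equivalence for every $\D\in\Cat_{\Top}$, and that the unit $\C\to\sing|\C|$ is a weak equivalence for cofibrant $\C$; both reduce, object-space by object-space and via $\pi_0$, to the classical Quillen equivalence $|-|\dashv\sing$ between $\sSet$ and $\Top$.

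The main obstacle is the last step: verifying the unit on cofibrant objects. The counit pointwise argument is immediate from WT1/WT2 together with the fact that $|\sing X|\to X$ is a weak equivalence in $\Top$, but for the unit one needs that $\sing$ applied to the geometric realization of a cofibrant simplicial category preserves the mapping spaces up to weak equivalence. This is where cofibrancy of $\C$ in the Bergner structure is genuinely used: for a cofibrant $\C$ the mapping spaces $\Map_\C(a,b)$ are cofibrant simplicial sets, so the componentwise unit $\Map_\C(a,b)\to\sing|\Map_\C(a,b)|$ is a weak equivalence, which together with the identification of objects yields WT1 and WT2 for the unit and completes the proof.
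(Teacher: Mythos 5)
Your proposal follows essentially the same route as the paper: the corollary is obtained by feeding the preceding lemmas (smallness of the domains, commutation of $\sing$ with directed colimits, transfinite compositions of weak equivalences, and the two pushout lemmas for the generating acyclic cofibrations) into the transfer Lemma \ref{lem1}, and the Quillen equivalence then comes down to the unit and counit of $|-|\dashv\sing$ being hom-wise weak equivalences, a point the paper leaves implicit. One small quibble: Bergner-cofibrancy of $\C$ is not what makes the unit $\Map_{\C}(a,b)\rightarrow\sing|\Map_{\C}(a,b)|$ a weak equivalence, since every simplicial set is cofibrant; what your argument actually needs is that the left adjoint $|-|$ on enriched categories is computed hom-wise (realization being monoidal, as the paper notes), after which the unit is a weak equivalence for every $\C$.
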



\section{Graphs and Categories }
In this paragraph, we define an adjunction between $\Cat_{\Top}$ and the categories of enriched graphs  on $\Top.$ This adjunction is constructed in the particular case where the set of objects is fixed.
We will denote  $\mathcal{O}-\Cat_{\Top}$ the category of small enriched categories over  $\Top$ with fixed set of objects  $\mathcal{O}$, the morphisms are those functors which are identities on objects.
By the same way, we define the category of small graphs enriched over $\Top$ by  $\mathcal{O}-\mathbf{Graph}_{\Top}$ with a fixed set of vertices $\mathcal{O}$ \\
There exists an adjunction between  $\mathcal{O}-\Cat_{\Top}$ and $\mathcal{O}-\mathbf{Graph}_{\Top}$ given by the forgetful functor and the free functor.
Before starting, we define the free functor between graphs and categories. First we study the case where 
 $\mathcal{O}$ is a set with one element. 
\begin{lemma}
There exists a right adjoint to the forgetful functor  $U:\mathbf{Mon}\rightarrow\Top$ where $\mathbf{Mon}$ is the category of topological monoids.
\end{lemma}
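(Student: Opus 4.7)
The plan is to apply the Special Adjoint Functor Theorem to $U \colon \mathbf{Mon} \to \Top$. First I would verify the standard hypotheses on $\mathbf{Mon}$: it is complete and cocomplete, co-wellpowered, and admits a small coseparating family. Limits in $\mathbf{Mon}$ are created by $U$, since a monoid structure lifts uniquely along limit projections, so completeness is inherited from $\Top$. Cocompleteness and coseparation follow from the general theory of categories of algebras for a monad over the Cartesian closed category $\Top$ of compactly generated weak Hausdorff spaces.

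The substantive step is to verify that $U$ preserves small colimits, since this is the precise obstruction to the existence of a right adjoint. Filtered colimits pose no real difficulty: multiplication is a finitary operation and filtered colimits commute with finite products in compactly generated $\Top$, so filtered colimits in $\mathbf{Mon}$ can be computed on underlying spaces. Coequalizers reduce to quotients in $\Top$ by the smallest congruence containing the given relation; the underlying space of a coequalizer in $\mathbf{Mon}$ therefore agrees with the coequalizer of the underlying continuous maps in $\Top$.

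The main obstacle concerns binary coproducts. In $\mathbf{Mon}$ the coproduct of $M_1$ and $M_2$ is the free topological product $M_1 \ast M_2$, built from alternating words modulo the identity and associativity relations and endowed with a colimit topology; the delicate point is to identify its underlying space with the $\Top$-coproduct $U(M_1) \sqcup U(M_2)$. This is where I would focus the bulk of the technical effort, exploiting the Cartesian closed structure on $\Top$ and the explicit word description of the free product to control the topology. Once colimit preservation is secured in this remaining case, the Special Adjoint Functor Theorem produces a right adjoint $R \colon \Top \to \mathbf{Mon}$ to $U$, and the characterisation $\Hom_{\mathbf{Mon}}(M, R(X)) \cong \Hom_{\Top}(U(M), X)$ follows from the adjunction it provides.
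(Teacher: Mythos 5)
Your proposal attacks the statement as literally written, but the statement contains a misprint, and your chosen route would in fact fail. The forgetful functor $U\colon\mathbf{Mon}\to\Top$ does \emph{not} have a right adjoint, because it does not preserve colimits: the initial topological monoid is the one-point monoid, whose underlying space is a point rather than the empty space, and for binary coproducts the underlying space of the free product $M_{1}\star M_{2}$ contains all alternating words $m_{1}n_{1}m_{2}n_{2}\cdots$ and is therefore strictly larger than $U(M_{1})\sqcup U(M_{2})$. The step you single out as "the main obstacle" --- identifying $U(M_{1}\star M_{2})$ with the $\Top$-coproduct of the underlying spaces --- is not merely delicate; it is false, so the Special Adjoint Functor Theorem cannot be applied and the argument collapses at exactly the point you deferred.

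What the paper actually proves (and what is used later to build the free category on an enriched graph) is the existence of a \emph{left} adjoint to $U$, namely the free topological monoid functor $L(X)=\ast\sqcup X\sqcup(X\times X)\sqcup(X\times X\times X)\sqcup\cdots$, together with the natural bijection $\homs_{\Top}(X,U(M))\cong\homs_{\mathbf{Mon}}(L(X),M)$ obtained by extending a map $X\to U(M)$ multiplicatively via $X\times\cdots\times X\to M\times\cdots\times M\to M$. This is an elementary explicit construction requiring no adjoint functor theorem. The lesson here is to sanity-check the direction of an adjunction before committing to heavy machinery: the phrase "right adjoint to the forgetful functor" should have raised a flag, since forgetful functors out of categories of algebras typically admit left adjoints (free constructions) and almost never right adjoints.
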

\begin{proof}
Let $X$ in $\Top$. we define
$$L(X) = \ast\sqcup X\sqcup (X\times X)\sqcup ( X\times X\times X)\sqcup\dots ;$$
it is a functor from  $\Top$ to topological monoids.

 It is easy to see that $L:\Top\rightarrow\mathbf{Mon}$ is a well defined functor. In fact, it is the desired functor. Let$M$ be a topological monoids,
   a morphism of monoid $L(X)\rightarrow M$ is given by a morphism of non pointed topological spaces  
   $X\rightarrow U(M).$This morphism  extends in an unique way in a morphism of monoids if we consider the following morphisms in $\Top$: 
 $$X\times X\dots \times X\rightarrow M\times M\dots \times M\rightarrow M.$$
We conclude that:
 $$\homs_{\Top}(X,U(M))=\homs_{\mathbf{Mon}}(L(X),M).$$
\end{proof}

For a generalization of the precedent adjunction to an adjunction between $\mathcal{O}-\Cat_{\Top}$ and $\mathcal{O}-\mathbf{Graph}_{\Top}$, we do ass follow: We pose $\mathbf{O}$ the trivial category with set of object  $\mathcal{O}$. for each graph $\mathbf{\Gamma}$ in  $\mathcal{O}-\mathbf{Graph}_{\Top}$ we define the set of the following categories indexed by  a pair of element $a,~b\in \mathcal{O}$ 
\[
 \mathbf{\Gamma}_{a,b}(c,d) = \left\{ \begin{array}{ll} 
 \mathbf{\Gamma}(c,d)  & \hbox{if ~$c=a\neq b=d$}\\ 
L(\mathbf{\Gamma}(c,d)) & \hbox{if ~ $a=c=b=d$}\\ 
\emptyset & \hbox{if~ $c\neq d$ ~and~ $a\neq c\wedge b\neq d$  }\\
\ast=id & \textrm{else}
\end{array} \right. 
\]

Let $\mathbf{\Gamma}$ a graph in $\mathcal{O}-\mathbf{Graph}_{\Top}$. we define the free category induced by the graph as a free product in the category    $\mathcal{O}-\Cat_{\Top}$ of all categories of the form
$\mathbf{\Gamma}_{a,b}$, more precisely 
$$L(\mathbf{\Gamma}) = \star_{(a,b) \in\mathcal{O}\times \mathcal{O}} \mathbf{\Gamma}_{a,b}.$$
By the free product, we mean the following colimit in $\Cat_{\Top}$:
$$ \colim_{(a,b) \in\mathcal{O}\times \mathcal{O}} \mathbf{\Gamma}_{a,b}.$$


\section{Realization}
Let  $\M$ be a simplicial model category (i.e., tensored and cotensored in a suitable way). The category 
$[\Delta^{op}, \M] $ is a model category with Reedy model structure
(cf \cite{goerss1999}) where the weak equivalences are defined degrewise.
\begin{definition} \label{rel1}
The realization functor
$$|-|:[\Delta^{op}, \M]\rightarrow \M$$
is defined as follow:

$$\xymatrix{ \bigsqcup_{\phi:[n]\rightarrow [m]} M_{m}\otimes\Delta^{n}\ar@<2pt>[r]^-{d_{0}}\ar@<-2pt>[r]_-{d_{1}} & \bigsqcup_{[n]} M_{n}\otimes\Delta^{n}\ar[r] & |M_{\bullet}| }$$
où $d_{0}=\phi^{\ast}\otimes id$ and $d_{1}=id\otimes\phi$.
\end{definition}
\begin{lemma}\label{rel11}
SInce $\M$ is a simplicial category, the functor $|-|$ admit a right adjoint:
$$(-)^{\Delta}:\M\rightarrow [\Delta^{op}, \M]: M\mapsto M^{\Delta^{n}}.$$
\end{lemma}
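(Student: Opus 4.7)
The plan is to exhibit the adjunction $|-|\dashv (-)^\Delta$ by a short chain of natural bijections of hom-sets. First I would rewrite the coequalizer appearing in Definition \ref{rel1} as a coend
$$|M_\bullet| \cong \int^{[n]\in\Delta} M_n \otimes \Delta^n,$$
since the two parallel arrows $d_0, d_1$ there are precisely the two legs whose coequalizer presents this coend. The universal property of a coend then gives, for any $N\in\M$,
$$\Hom_\M(|M_\bullet|, N) \cong \int_{[n]\in\Delta} \Hom_\M(M_n \otimes \Delta^n, N).$$

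Next, because $\M$ is tensored and cotensored over $\sSet$, the natural bijection $\Hom_\M(M_n\otimes\Delta^n,N)\cong \Hom_\M(M_n,N^{\Delta^n})$ holds at each level $n$. Substituting into the end, and recognising $N^\Delta$ as the simplicial object $[n]\mapsto N^{\Delta^n}$ with face and degeneracy maps induced contravariantly from the cosimplicial structure of $\Delta^\bullet$, I would then invoke the standard identification of natural transformations of $\M$-valued simplicial objects as an end:
$$\int_{[n]\in\Delta} \Hom_\M(M_n, N^{\Delta^n}) \cong \Hom_{[\Delta^{op},\M]}(M_\bullet, N^\Delta).$$
Composing the three bijections, and observing that each is natural in both $M_\bullet$ and $N$ by functoriality of $\otimes$, $(-)^{(-)}$ and the (co)end, yields the desired adjunction.

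The step that deserves the most attention is justifying the last identification: one must check that the end genuinely parametrises natural transformations $M_\bullet \to N^\Delta$, which amounts to matching the dinaturality conditions imposed on the family $\{f_n: M_n \to N^{\Delta^n}\}$ with the commutation relations required of a simplicial morphism. This comes out correctly because the cotensor $N^{(-)}$ is contravariantly functorial in simplicial sets, so the cosimplicial structure of $\Delta^\bullet$ produces exactly the simplicial structure on $N^\Delta$ that is dual to the dinaturality constraints coming from $M_\bullet \otimes \Delta^\bullet$. Apart from this coherence check, the argument is a purely formal manipulation of (co)end calculus and the enriched adjunction packaged in the simplicial model structure on $\M$.
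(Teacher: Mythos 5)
Your argument is correct: the coequalizer in Definition \ref{rel1} is exactly the coend presentation of $\int^{[n]} M_n\otimes\Delta^n$, and the level-wise tensor--cotensor adjunction combined with the end formula for natural transformations gives the stated adjunction, naturally in both variables. The paper offers no written proof beyond invoking the simplicial (tensored and cotensored) structure of $\M$, and your (co)end computation is precisely the standard argument that assertion is implicitly relying on, so you are taking essentially the same route, just spelled out.
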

\begin{lemma}\label{rel2}[\cite{goerss1999},VII, proposition 3.6]
Let  $\M$ a simplicial model category and $[\Delta^{op}, \M] $ a Reedy model category, then the realization functor 
$$|-|:[\Delta^{op}, \M]\rightarrow \M$$
is a left Quillen functor. 
\end{lemma}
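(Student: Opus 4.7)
The plan is to verify the two preservation properties required of a left Quillen functor, since Lemma \ref{rel11} already furnishes the right adjoint $(-)^{\Delta}$. So I need only show that $|-|$ sends Reedy cofibrations to cofibrations in $\M$, and Reedy acyclic cofibrations to acyclic cofibrations in $\M$.

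First I would recall the standard characterization of Reedy (acyclic) cofibrations in $[\Delta^{op},\M]$: a morphism $f : M_\bullet \to N_\bullet$ is a Reedy cofibration if and only if, for every $n\geq 0$, the relative latching map
$$M_n\bigsqcup_{L_n M_\bullet} L_n N_\bullet \longrightarrow N_n$$
is a cofibration in $\M$ (here $L_n(-)$ denotes the $n$-th latching object), and $f$ is a Reedy acyclic cofibration iff in addition $f$ is a levelwise weak equivalence, equivalently iff the above relative latching maps are acyclic cofibrations in $\M$. I would take this characterization as given from the general theory of Reedy model structures in \cite{goerss1999}.

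Next I would write $|M_\bullet|$ as the colimit of its skeletal filtration $\mathrm{sk}_{n-1}|M_\bullet|\hookrightarrow \mathrm{sk}_n |M_\bullet|$, where each stage fits into a pushout square whose attaching map identifies with the pushout-product of the latching map $L_nM_\bullet \to M_n$ against the cofibration of simplicial sets $\partial\Delta^n\hookrightarrow \Delta^n$. Comparing this filtration across $f$ then presents $|f|$ as a transfinite composition of pushouts of pushout-products of the relative latching maps with $\partial\Delta^n\hookrightarrow \Delta^n$.

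Finally, I would invoke the SM7 axiom for the simplicial model category $\M$: the pushout-product of a cofibration in $\M$ with the cofibration $\partial\Delta^n\hookrightarrow \Delta^n$ is a cofibration in $\M$, and is an acyclic cofibration as soon as either factor is. Applied stage by stage, this shows $|f|$ is a transfinite composition of pushouts of (acyclic) cofibrations, hence an (acyclic) cofibration in $\M$, since these classes are closed under pushout and transfinite composition. The delicate step, and the one I expect to consume most of the work, is the explicit identification of each attaching map in the skeletal filtration as a pushout-product; once this identification is in place, the conclusion follows immediately from SM7.
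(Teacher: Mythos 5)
Your argument is correct, but note that the paper itself offers no proof of this lemma at all: it is stated with a bare citation to Goerss--Jardine, VII, Proposition 3.6, so there is no internal proof to compare against. Your route is the standard direct one for the left adjoint: the latching-map characterization of Reedy (acyclic) cofibrations, the skeletal filtration of $|M_{\bullet}|$, the identification of each stage $Q_{n-1}\to Q_{n}$ (with $Q_{n}=|M_{\bullet}|\sqcup_{\mathrm{sk}_{n}|M_{\bullet}|}\mathrm{sk}_{n}|N_{\bullet}|$) as a pushout of the pushout-product of the relative latching map with $\partial\Delta^{n}\hookrightarrow\Delta^{n}$, and then SM7 plus closure under pushout and transfinite composition. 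That works, and the step you flag as delicate (the pushout-product identification of the attaching maps) is exactly the content of the skeleton/latching analysis in the cited section of Goerss--Jardine. It is worth pointing out a dual argument that avoids that bookkeeping entirely: since Lemma \ref{rel11} provides the right adjoint $(-)^{\Delta}$, it suffices to check that $(-)^{\Delta}$ sends (trivial) fibrations of $\M$ to Reedy (trivial) fibrations, and because the $n$-th matching object of $M^{\Delta}$ is $M^{\partial\Delta^{n}}$, the relevant matching map for $p:M\to N$ is $M^{\Delta^{n}}\to M^{\partial\Delta^{n}}\times_{N^{\partial\Delta^{n}}}N^{\Delta^{n}}$, which is a (trivial) fibration by SM7 in its cotensor form; this one-line verification buys brevity, while your filtration argument has the advantage of exhibiting $|f|$ explicitly as a relative cell-type map, which is often useful elsewhere.
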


Now, we specify to $\M=\Top$.  
In this particular case, $[\Delta^{op}, \Top]$ is a monoidal category (the monoidal structure is defined degree wise form the monoidal structure of $\Top$). So, the realization functor  
$|-|:[\Delta^{op}, \Top]\rightarrow \Top$ commutes with the monoidal product (cf \cite{EKMM}, chapitre X, proposition 1.3).\\
\begin{corollary}\label{rel3}
The realization functor  $|-|:[\Delta^{op}, \Top]\rightarrow \Top$ preserve the homotopy equivalences. 
\end{corollary}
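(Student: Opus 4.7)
The plan is to deduce this directly from the monoidal compatibility of $|-|$ noted in the paragraph preceding the corollary. Let $f:M_{\bullet}\to N_{\bullet}$ be a homotopy equivalence in $[\Delta^{op},\Top]$, with homotopy inverse $g:N_{\bullet}\to M_{\bullet}$ and homotopies $H_M:M_{\bullet}\otimes\Delta^1\to M_{\bullet}$, $H_N:N_{\bullet}\otimes\Delta^1\to N_{\bullet}$ witnessing $g\circ f\simeq \mathrm{id}_{M_{\bullet}}$ and $f\circ g\simeq \mathrm{id}_{N_{\bullet}}$. Here $\Delta^1$ denotes the standard interval object in $[\Delta^{op},\Top]$, which one can take either as the constant simplicial object at the topological unit interval, or as the simplicial $1$-simplex tensored with the one-point space; both choices realize to $[0,1]\in\Top$.

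Applying $|-|$ and invoking the natural isomorphism $|X_{\bullet}\otimes Y_{\bullet}|\cong |X_{\bullet}|\otimes |Y_{\bullet}|$ from \cite{EKMM} (cited just above), the two homotopies become maps $|M_{\bullet}|\times |\Delta^1|\to |M_{\bullet}|$ and $|N_{\bullet}|\times |\Delta^1|\to |N_{\bullet}|$. Since $|\Delta^1|$ is the standard topological interval, these are classical topological homotopies between $|g|\circ |f|$ and $\mathrm{id}_{|M_{\bullet}|}$, respectively between $|f|\circ |g|$ and $\mathrm{id}_{|N_{\bullet}|}$. Therefore $|f|$ is a homotopy equivalence in $\Top$ with inverse $|g|$.

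The only point requiring attention is to fix the convention for the interval object (and hence for homotopy) in $[\Delta^{op},\Top]$ and to verify that its realization is indeed the standard topological interval; this is immediate from Definition \ref{rel1} applied to $\Delta^1$. Once this identification is recorded, the argument is purely formal, resting only on the functoriality of $|-|$ together with its monoidal compatibility, both of which are already available. Note in particular that, in contrast to Lemma \ref{rel2} where cofibrancy was needed, no Reedy cofibrancy hypothesis on $M_{\bullet}$ or $N_{\bullet}$ is required here, because the witnessing data of a homotopy equivalence is transported by $|-|$ directly, without any need to factor through a model-theoretic argument.
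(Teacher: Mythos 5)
Your proposal is correct and follows essentially the same route as the paper, which states the corollary immediately after (and as a direct consequence of) the fact that $|-|:[\Delta^{op},\Top]\rightarrow\Top$ commutes with the monoidal product: homotopies, given as maps out of the product with the interval object, are carried by the monoidal isomorphism to topological homotopies, so homotopy inverses are preserved. Your explicit check that the interval object (constant simplicial space at $[0,1]$ or the discrete simplicial $1$-simplex) realizes to $[0,1]$, and your remark that no Reedy cofibrancy is needed, just make explicit what the paper leaves implicit.
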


In the practice, the lemma \ref{rel2} is difficulte to use.  It is quite-difficult to show that an object in $[\Delta^{op}, \M]$ is 
Reedy cofibrant. In \textrm{l'appendice A} of  \cite{segal1974}, Segal gives us an alternative solution in the particular case of  $[\Delta^{op}, \Top]$.

\begin{lemma}\label{doublerel}
There exist a functor $||-||:[\Delta^{op}, \Top]\rightarrow \Top$, called \textbf{good realization} with the following propreties: 
\begin{enumerate}
\item Let $f_{\bullet}: X_{\bullet}\rightarrow Y_{\bullet}$ a morphism in $[\Delta^{op}, \Top]$ such that if $f_{n}: X_{n}\rightarrow Y_{n}$ 
is a weak equivalence for all  $n\in\mathbb{N}$,
then $||f_{\bullet}||: ||X_{\bullet}||\rightarrow ||Y_{\bullet}||$ is a weak equivalence in $\Top$;
\item There exists a natural transformation $\mathcal{N}:||-||\rightarrow |-|$, with the property that for all \textbf{good simplicial topological space} 
$X_{\bullet}$, the natural morphism:
$$ \mathcal{N}_{X_{\bullet}}:||X_{\bullet}||\rightarrow |X_{\bullet}|$$  is a weak equivalence in $\Top$;
\item The natural morphism $||X_{\bullet}\times Y_{\bullet}||\rightarrow||X_{\bullet}||\times ||Y_{\bullet}||$ is a weak equivalence in  $\Top$.
\end{enumerate}
\end{lemma}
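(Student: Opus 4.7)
The plan is to construct the good realization as Segal's \emph{fat realization}, which identifies points only along face maps and forgets the degeneracy identifications.

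Define
$$\|X_\bullet\| \;:=\; \Bigl(\bigsqcup_{n\ge 0} X_n\times \Delta^n\Bigr)\big/\!\sim$$
where the equivalence relation is generated only by $(d_i x,t)\sim (x,d^i t)$ for face maps (not degeneracies). The quotient that further imposes $(s_i x,t)\sim (x, s^i t)$ is precisely the ordinary realization of Definition \ref{rel1}, so there is a canonical surjection $\mathcal{N}_{X_\bullet}:\|X_\bullet\|\to |X_\bullet|$, which is clearly natural in $X_\bullet$.

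For property (1), I would equip $\|X_\bullet\|$ with its skeletal filtration $\|X_\bullet\|_{(n)}$, defined by taking the face-quotient of $\bigsqcup_{k\le n}X_k\times\Delta^k$. Each stage fits in a pushout
$$\xymatrix{X_n\times\partial\Delta^n \ar[r]\ar[d] & \|X_\bullet\|_{(n-1)} \ar[d] \\ X_n\times\Delta^n \ar[r] & \|X_\bullet\|_{(n)}}$$
where the left vertical map is a closed cofibration in $\Top$, \emph{without} any assumption on the degeneracies of $X_\bullet$ (this is precisely why the fat realization is better behaved than $|-|$). A level-wise weak equivalence $f_\bullet$ then induces, by the gluing lemma in $\Top$ and induction, a weak equivalence at every filtration stage; property (1) follows by passing to the sequential colimit along cofibrations.

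For property (2), I would compare the filtration of $\|X_\bullet\|$ with the skeletal filtration of $|X_\bullet|$ stage by stage. The map $\|X_\bullet\|_{(n)}\to |X_\bullet|_{(n)}$ factors through the relative pushout that successively collapses degenerate simplices; when $X_\bullet$ is good (all degeneracies are closed cofibrations) these collapses are homotopy equivalences of the pushout squares, so by the gluing lemma each filtration step gives a weak equivalence, and the colimit is a weak equivalence as well.

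For property (3), I would exhibit the natural map $\|X_\bullet\times Y_\bullet\|\to \|X_\bullet\|\times\|Y_\bullet\|$ from the universal property (both sides receive compatible maps from $\bigsqcup_n X_n\times Y_n\times\Delta^n$) and analyse it via the standard simplicial decomposition of $\Delta^m\times\Delta^n$ into non-degenerate simplices indexed by $(m,n)$-shuffles. Applying property (1) inductively along the product of the two skeletal filtrations gives the result. The main obstacle is property (2): one must check carefully that at each filtration step the quotient collapsing degeneracies is a homotopy equivalence, which relies crucially on the goodness hypothesis and the cofibrancy results already available in $\Top$.
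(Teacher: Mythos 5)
Your construction is exactly the one the paper is pointing to: the paper gives no proof of this lemma beyond the reference to Segal's Appendix A in \cite{segal1974}, and the fat realization $\|X_\bullet\|=\bigl(\bigsqcup_n X_n\times\Delta^n\bigr)/\!\sim$ (face identifications only), with the skeletal filtration argument for (1) and the collapse map $\|X_\bullet\|\to|X_\bullet|$ for (2), is precisely Segal's. Your sketch of (1) is fine: $X_n\times\partial\Delta^n\to X_n\times\Delta^n$ is a closed cofibration with no hypothesis on $X_\bullet$, so the gluing lemma and passage to the sequential colimit along closed cofibrations go through. In (2) there is one glossed point worth flagging: goodness says each single degeneracy $s_i\colon X_{n-1}\to X_n$ is a closed cofibration, but the stagewise comparison you describe needs that the whole degenerate subspace $\bigcup_i s_iX_{n-1}\subset X_n$ is a cofibration (``properness''); deducing this from goodness is not formal and uses Lillig's union theorem (or Segal's own lemma), so it should be invoked explicitly rather than treated as immediate.

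The genuine gap is in (3). The shuffle decomposition of $\Delta^m\times\Delta^n$ is the proof that the \emph{ordinary} realization is monoidal, i.e. $|X_\bullet\times Y_\bullet|\cong|X_\bullet|\times|Y_\bullet|$ in compactly generated spaces (the \cite{EKMM} fact quoted before Corollary \ref{rel3}); it does not transfer to $\|-\|$, because the comparison map $\|X_\bullet\times Y_\bullet\|\to\|X_\bullet\|\times\|Y_\bullet\|$ is far from a homeomorphism (already for $X_\bullet=Y_\bullet=\ast$ it is the diagonal of an infinite-dimensional contractible space), and the two sides are not filtered compatibly: the $n$-th stage of the source is built from the cell $X_n\times Y_n\times\Delta^n$, whereas the natural filtration of the target is by $\bigcup_{p+q=n}\|X_\bullet\|_{(p)}\times\|Y_\bullet\|_{(q)}$, so ``induction along the product of the two skeletal filtrations'' has no well-defined inductive step as stated. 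Two standard repairs: (i) the bisimplicial route --- view $X_\bullet\times Y_\bullet$ as the diagonal of $(p,q)\mapsto X_p\times Y_q$, compare its fat realization with the iterated fat realization using property (1), and use that $\|p\mapsto X_p\times K\|\cong\|X_\bullet\|\times K$ for a constant space $K$ (products preserve these quotients in compactly generated spaces); or (ii) reduce to the good case: for good $X_\bullet,Y_\bullet$ the product is again good (products of NDR pairs are NDR pairs), so (2) together with the monoidal property of $|-|$ gives the equivalence, and the general case follows by replacing $X_\bullet,Y_\bullet$ by the good replacements $\tau X_\bullet,\tau Y_\bullet$ of Lemma \ref{doublerel1} and applying property (1). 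With either of these substituted for your shuffle argument, the proposal matches the cited source.
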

For  the details we refer to \cite{segal1974}.\\
\begin{lemma}\label{doublerel1}
There exists an endofunctor $\tau: [\Delta^{op}, \Top]\rightarrow [\Delta^{op}, \Top]$ and a natural transformation
 $\mathcal{Q}:\tau\rightarrow id$ 
with the following properties:
\begin{enumerate}
\item   $\tau X_{\bullet}$ is a good simplicial topological space for all $X_{\bullet}\in  [\Delta^{op}, \Top]$;
\item The natural morphism $\mathcal{Q}_{n}:\tau_{n}(X_{\bullet})\rightarrow X_{n}$ is a weak equivalence for all $n\in\mathbb{N}$;
\item The natural morphism $||X_{\bullet}||\rightarrow|\tau(X_{\bullet})|$ is a weak equivalence;
\item Finally , we have  $\tau_{0}(X_{\bullet})=X_{0}$.
\end{enumerate}
\end{lemma}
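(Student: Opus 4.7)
The plan is to construct $\tau$ as a functorial Reedy cofibrant replacement, assembled inductively by iterated mapping-cylinder factorisations of the latching maps. This is essentially the construction suggested in Appendix A of~\cite{segal1974}, with care taken to make it strictly functorial in $X_\bullet$.

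First I would set $\tau_0(X_\bullet) := X_0$, securing property~(4) by definition. Inductively, suppose the truncation $\tau_{\leq n-1}(X_\bullet)$ has been constructed together with a natural level-wise weak equivalence $\mathcal{Q}_{\leq n-1} : \tau_{\leq n-1}(X_\bullet) \to X_{\leq n-1}$ of truncated simplicial spaces, and such that every latching map $L_k \tau(X_\bullet) \to \tau_k(X_\bullet)$ for $k \leq n-1$ is a cofibration in $\Top$. Form the latching object $L_n \tau(X_\bullet)$ from the already-constructed data and consider the natural composite
$$ L_n \tau(X_\bullet) \longrightarrow L_n X_\bullet \longrightarrow X_n, $$
where the first arrow is induced by $\mathcal{Q}_{\leq n-1}$. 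I would then factor this composite through its mapping cylinder, setting $\tau_n(X_\bullet)$ to be the cylinder itself; this yields a cofibration $L_n \tau(X_\bullet) \hookrightarrow \tau_n(X_\bullet)$ together with a homotopy equivalence $\mathcal{Q}_n : \tau_n(X_\bullet) \to X_n$, while the face operators out of $\tau_n(X_\bullet)$ are defined via the canonical retraction of the cylinder onto $X_n$ followed by the face maps of $X_\bullet$ (keeping the required compatibility with $\mathcal{Q}_{n-1}$ from the universal property of the cylinder). Functoriality of the mapping-cylinder construction in $\Top$ ensures that $\tau$ is an endofunctor and $\mathcal{Q}$ a natural transformation.

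Properties~(2) and~(4) then hold by construction. Property~(1) is immediate, since the condition that every latching map be a cofibration is exactly the Reedy cofibrancy, hence the goodness condition appearing in Lemma~\ref{doublerel}. For property~(3) I would compare the natural zigzag
$$ \lvert\lvert X_\bullet\rvert\rvert \xleftarrow{\,\lvert\lvert\mathcal{Q}\rvert\rvert\,} \lvert\lvert\tau X_\bullet\rvert\rvert \xrightarrow{\,\mathcal{N}_{\tau X_\bullet}\,} \lvert\tau X_\bullet\rvert, $$
observing that the left arrow is a weak equivalence by part~(1) of Lemma~\ref{doublerel} applied to the level-wise weak equivalence $\mathcal{Q}$, and the right arrow is a weak equivalence by part~(2) of the same lemma since $\tau X_\bullet$ is good.

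The main obstacle, as in any Reedy cofibrant replacement, lies in verifying that the inductive step actually produces a simplicial object on the nose rather than up to homotopy: the face maps out of the new cylinder stage must satisfy the simplicial identities rigidly and must descend compatibly with the cofibrant latching data. The positive input from $\Top$ is the \emph{gluing lemma}, that pushouts of weak equivalences along cofibrations remain weak equivalences, which is what propagates the weak equivalence $\mathcal{Q}_n$ from the stage $n-1$ and keeps $\mathcal{Q}$ a level-wise weak equivalence throughout the construction.
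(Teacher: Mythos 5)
Your construction has a genuine gap precisely at the point you flag and then do not resolve: the simplicial structure maps out of the new cylinder stage. If you set $\tau_n(X_\bullet)=\mathrm{Cyl}\bigl(L_n\tau(X_\bullet)\to X_n\bigr)$, the degeneracies into $\tau_n$ are fine (they factor through the latching object), but the face maps $d_i$ must land in $\tau_{n-1}(X_\bullet)$, whereas "retraction of the cylinder onto $X_n$ followed by the face maps of $X_\bullet$" lands in $X_{n-1}$. The comparison map $\mathcal{Q}_{n-1}:\tau_{n-1}(X_\bullet)\to X_{n-1}$ points the wrong way, and it has no natural section, so "compatibility with $\mathcal{Q}_{n-1}$" only pins $d_i$ down up to homotopy; the mapping-cylinder universal property gives no map into $\tau_{n-1}(X_\bullet)$ at all. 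The gluing lemma you invoke controls preservation of weak equivalences, not this rigidification problem, so as written you obtain a simplicial object only up to homotopy, which is exactly what the lemma must avoid. A secondary issue: property (3) asserts that a specific natural morphism $||X_\bullet||\rightarrow|\tau(X_\bullet)|$ is a weak equivalence; your zigzag through $||\tau X_\bullet||$ shows the two objects are weakly equivalent but never produces that natural map. (There is also a smaller conflation between Reedy cofibrancy and Segal's goodness, i.e.\ degeneracies being closed Hurewicz cofibrations; with $\tau_0(X_\bullet)=X_0$ for arbitrary $X_0$ you cannot be Reedy cofibrant in the Quillen structure, so you would in any case have to work with the Str\o m-type notion, which needs saying.)

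The paper avoids all of this by not constructing $\tau$ abstractly: following Segal's Appendix A it takes the explicit thickening of Definition \ref{tau}, namely $\tau_n(A_\bullet)=\bigcup_{\sigma}[0,1]^{\sigma}\times A_{n,\sigma}\subseteq[0,1]^{n}\times A_{n}$ with $A_{n,\sigma}$ the intersection of the images of the degeneracies indexed by $\sigma$. Here the face and degeneracy operators, the augmentation $\mathcal{Q}$ (collapse the cube factor), the identification $\tau_0(X_\bullet)=X_0$, and the natural map $||X_\bullet||\rightarrow|\tau(X_\bullet)|$ are all given by explicit formulas, so the simplicial identities hold on the nose and no lifting against $\mathcal{Q}_{n-1}$ is ever needed. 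If you want to keep your inductive cylinder strategy, you must either exhibit strict face maps into $\tau_{n-1}(X_\bullet)$ (e.g.\ by building $\tau_n$ inside $[0,1]^n\times X_n$ as Segal does, which is what the cubes are for) or abandon strictness and work with a genuinely different, homotopy-coherent statement, which is not what Lemma \ref{doublerel1} claims.
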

\begin{corollary}\label{doublerel2}
Let  $f_{\bullet}: X_{\bullet}\rightarrow Y_{\bullet}$ a morphism in $[\Delta^{op}, \Top]$, such that $f_{n}$ is a weak equivalence for all  $n$, then
$$|\tau(f_{\bullet})|: |\tau(X_{\bullet})|\rightarrow |\tau(Y_{\bullet})|$$
is a weak equivalence of topological spaces.
\end{corollary}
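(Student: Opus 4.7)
The plan is to exhibit $|\tau(f_\bullet)|$ as one side of a commutative square whose remaining three sides are already known to be weak equivalences. First I would invoke item (1) of Lemma \ref{doublerel}: since by hypothesis $f_n : X_n \to Y_n$ is a weak equivalence in $\Top$ for every $n\in \mathbb{N}$, the good realization $||f_\bullet|| : ||X_\bullet|| \to ||Y_\bullet||$ is itself a weak equivalence in $\Top$.

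Next I would apply item (3) of Lemma \ref{doublerel1} to both $X_\bullet$ and $Y_\bullet$, which provides the natural weak equivalences $||X_\bullet|| \to |\tau(X_\bullet)|$ and $||Y_\bullet|| \to |\tau(Y_\bullet)|$. Naturality of this comparison in the morphism $f_\bullet$ yields the commutative square
$$\xymatrix{ ||X_\bullet||\ar[r]^-{\sim}\ar[d]_-{||f_\bullet||} & |\tau(X_\bullet)|\ar[d]^-{|\tau(f_\bullet)|}\\ ||Y_\bullet||\ar[r]^-{\sim} & |\tau(Y_\bullet)|, }$$
in which the two horizontal arrows are weak equivalences by the preceding lemma and the left vertical arrow has just been shown to be a weak equivalence.

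Finally, applying the 2-out-of-3 property for weak equivalences in $\Top$ to this square forces the right vertical arrow $|\tau(f_\bullet)|$ to be a weak equivalence, which is exactly the assertion of the corollary. The only genuinely delicate point in this argument is the naturality of the comparison $||-|| \Rightarrow |\tau(-)|$ as $f_\bullet$ varies; this is bundled into the statement of Lemma \ref{doublerel1} and is justified by the explicit constructions in the appendix of \cite{segal1974}, so once those are invoked the corollary follows immediately without any further analysis of the Reedy cofibrancy of $\tau(X_\bullet)$ or $\tau(Y_\bullet)$.
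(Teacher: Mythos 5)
Your argument is correct and is exactly the intended one: the paper's own proof simply cites Lemma \ref{doublerel} and Lemma \ref{doublerel1}, and your write-up (goodness of $||f_\bullet||$ from \ref{doublerel}(1), the natural comparison $||-||\to|\tau(-)|$ from \ref{doublerel1}(3), and 2-out-of-3 in the resulting naturality square) is the straightforward unpacking of that citation. Your remark that the only delicate point is the naturality of the comparison, which is built into the statement of \ref{doublerel1} via Segal's construction, is also consistent with how the paper treats it.
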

\begin{proof}
It is a direct consequence from \ref{doublerel} and \ref{doublerel1}.
\end{proof}
We can see the functor $\tau$ as kind of cofibrant replacement. It is useful to know how to describe  the functor $\tau$.
\begin{definition}\label{tau}[\cite{segal1974}, Appendice A]
Let$A_{\bullet}$ a simplicial topological space and $\sigma$ a subset of $\{1,\dots,n\}$. We pose:
\begin{enumerate}
\item $A_{n,i}=s_{i}A_{n}.$
\item $A_{n,\sigma}=\cap_{i\in\sigma}A_{n,i}$.
\item $\tau_{n}(A_{\bullet})$ is a union of all subsets $[0,1]^{\sigma}\times A_{n,\sigma}$ of $[0,1]^{n}\times A_{n}$.
\end{enumerate}
\end{definition}
The morphism $\tau(A_{\bullet})\rightarrow A_{\bullet}$ collapes  $[0,1]^{\sigma}$ and inject  $A_{n,\sigma}$ in$A_{n}$.

\begin{lemma}\label{tauretracte}
The functor $\tau$ sends homotopy equivalences to homotopy equivalences.
\end{lemma}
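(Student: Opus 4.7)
The plan is to show that $\tau$ takes simplicial homotopies to simplicial homotopies, from which preservation of homotopy equivalences follows by functoriality. Regard a homotopy between $f,g\colon X_\bullet\to Y_\bullet$ as a morphism $H\colon X_\bullet\times\underline{I}\to Y_\bullet$ in $[\Delta^{op},\Top]$, where $\underline{I}$ denotes the constant simplicial topological space at the unit interval $[0,1]$. It then suffices to produce, naturally in $X_\bullet$, an isomorphism $\tau(X_\bullet\times\underline{I})\cong\tau(X_\bullet)\times\underline{I}$: applying $\tau$ to $H$ and precomposing with this isomorphism yields a homotopy $\tau(X_\bullet)\times\underline{I}\to\tau(Y_\bullet)$ between $\tau(f)$ and $\tau(g)$. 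Starting from a homotopy equivalence $f$ with inverse $g$ together with the two homotopies witnessing $fg\simeq\mathrm{id}$ and $gf\simeq\mathrm{id}$, functoriality of $\tau$ then produces the required data for $\tau(f)$.

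The heart of the argument is the identification $\tau(X_\bullet\times\underline{I})\cong\tau(X_\bullet)\times\underline{I}$. At level $n$, the degeneracies on $X_\bullet\times\underline{I}$ are $s_i\times\mathrm{id}_I$ since $\underline{I}$ is constant, so the image of $s_i$ in $X_n\times I$ is $(s_iX_{n-1})\times I$. Consequently $(X_\bullet\times\underline{I})_{n,\sigma}=X_{n,\sigma}\times I$, and the definition of $\tau_n$ from Definition \ref{tau} gives
$$\tau_n(X_\bullet\times\underline{I})=\bigcup_{\sigma\subseteq\{1,\dots,n\}}[0,1]^{\sigma}\times X_{n,\sigma}\times I=\tau_n(X_\bullet)\times I.$$
One checks that this identification intertwines the face maps on either side (both are induced from the face maps of $X_\bullet$ together with the appropriate deletion/projection of a factor of $[0,1]^{\sigma}$, with $I$ playing a purely passive role) and is natural in $X_\bullet$, so the identification assembles into an isomorphism of simplicial topological spaces.

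The main subtlety, and what I would spend the most care on, is exactly this compatibility with face maps at each level $n$, since the face maps on $\tau_n$ involve the combinatorics of collapsing the extra cube coordinates indexed by $\sigma$ together with restriction to subspaces $X_{n,\sigma}$; once one verifies that the $I$-factor is carried along unchanged, the rest of the proof is formal. After that, applying $\tau$ to the two witnessing homotopies and using the cylinder identification gives homotopies $\tau(f)\tau(g)\simeq\mathrm{id}_{\tau(Y_\bullet)}$ and $\tau(g)\tau(f)\simeq\mathrm{id}_{\tau(X_\bullet)}$, completing the proof.
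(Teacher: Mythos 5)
Your proposal is correct and follows essentially the same route as the paper: the key step in both is the levelwise identification $\tau_n(X_\bullet\times[0,1])=\tau_n(X_\bullet)\times[0,1]$, coming from the fact that degeneracies on the cylinder act as $s_i\times\mathrm{id}$, after which $\tau$ applied to a homotopy is again a homotopy. Your extra care about compatibility with face maps is a detail the paper leaves implicit, but it does not change the argument.
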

\begin{proof}

Let  $h:X_{\bullet}\times[0,1]\rightarrow Y_{\bullet}$ be a homotopy between  $t$ and $s$. By definition of  $\tau$, we have 
\begin{eqnarray*}
\tau_{n}(X_{\bullet}\times [0,1]) &=& \bigcup_{\sigma\in\{1,\dots n\}}[0,1]^{\sigma}\times(X_{\bullet}\times [0,1])_{n,\sigma}\\
&=& \bigcup_{\sigma\in\{1,\dots n\}}([0,1]^{\sigma}\times X_{n,\sigma}\times [0,1])\\
&=&(\bigcup_{\sigma\in\{1,\dots n\}}[0,1]^{\sigma}\times X_{n,\sigma})\times [0,1]\\
&=& \tau_{n}(X_{\bullet})\times [0,1].
\end{eqnarray*}
Consequently  $\tau(h):\tau(X_{\bullet})\times [0,1]\rightarrow \tau(Y_{\bullet})$ is a homotopy between  $\tau(t)$ and $\tau(s)$.
\end{proof}
\begin{definition}
a strong section  $f:X\rightarrow Y$ is a continues application  $i: Y\rightarrow X$ such taht $f\circ i=id_{Y}$ 
and such that there exists a homotopy between $i\circ f$ and  $id_{X}$ which fix $Y$.
\end{definition}
\begin{corollary}\label{tauretracte1}
The functor $\tau$ preserve strong sections  .
\end{corollary}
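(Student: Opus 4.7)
The plan is to mimic the proof of Lemma \ref{tauretracte}, but to also track the section data carefully. Let $f_{\bullet}: X_{\bullet} \to Y_{\bullet}$ be a strong section with right inverse $i_{\bullet}: Y_{\bullet} \to X_{\bullet}$ and let $H: X_{\bullet} \times [0,1] \to X_{\bullet}$ be a homotopy from $i \circ f$ to $\mathrm{id}_{X_{\bullet}}$ whose restriction along $i$ is constant, i.e.\ satisfying
\[
H \circ (i \times \mathrm{id}_{[0,1]}) \;=\; i \circ \mathrm{pr}_{Y_{\bullet}}
\]
as maps $Y_{\bullet} \times [0,1] \to X_{\bullet}$.

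The first step is to produce the candidate section for $\tau(f)$. Applying $\tau$ yields $\tau(i): \tau(Y_{\bullet}) \to \tau(X_{\bullet})$, and functoriality of $\tau$ gives
\[
\tau(f) \circ \tau(i) \;=\; \tau(f \circ i) \;=\; \tau(\mathrm{id}_{Y_{\bullet}}) \;=\; \mathrm{id}_{\tau(Y_{\bullet})},
\]
so $\tau(i)$ is a section of $\tau(f)$.

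The second step is to produce the candidate homotopy. The key computation from the proof of Lemma \ref{tauretracte} identifies $\tau_{n}(X_{\bullet} \times [0,1]) = \tau_{n}(X_{\bullet}) \times [0,1]$ naturally in $X_{\bullet}$, so $\tau(H)$ becomes a genuine map $\tau(X_{\bullet}) \times [0,1] \to \tau(X_{\bullet})$. Its endpoints, by functoriality of $\tau$ and the computation of $\tau$ on the inclusions $X_{\bullet} \hookrightarrow X_{\bullet} \times [0,1]$ at $0$ and $1$, are $\tau(i) \circ \tau(f)$ and $\mathrm{id}_{\tau(X_{\bullet})}$ respectively.

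The third and final step is to verify that $\tau(H)$ fixes $\tau(Y_{\bullet})$ along $\tau(i)$. Apply $\tau$ to both sides of the identity $H \circ (i \times \mathrm{id}_{[0,1]}) = i \circ \mathrm{pr}_{Y_{\bullet}}$. Using the naturality $\tau(i \times \mathrm{id}_{[0,1]}) = \tau(i) \times \mathrm{id}_{[0,1]}$ from the previous lemma, and the evident identification $\tau(\mathrm{pr}_{Y_{\bullet}}) = \mathrm{pr}_{\tau(Y_{\bullet})}$ under $\tau(Y_{\bullet} \times [0,1]) = \tau(Y_{\bullet}) \times [0,1]$, we obtain
\[
\tau(H) \circ \bigl(\tau(i) \times \mathrm{id}_{[0,1]}\bigr) \;=\; \tau(i) \circ \mathrm{pr}_{\tau(Y_{\bullet})},
\]
which is exactly the condition that the homotopy $\tau(H)$ is constant along $\tau(i)$. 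Hence $\tau(f)$ is again a strong section.

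There is no real obstacle here beyond bookkeeping: all the substantive geometric content, namely the monoidal compatibility $\tau(X_{\bullet} \times [0,1]) \cong \tau(X_{\bullet}) \times [0,1]$, was already extracted in Lemma \ref{tauretracte}, and the remaining content is pure functoriality of $\tau$ applied to the defining equations of a strong section.
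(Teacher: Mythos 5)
Your argument is correct and is essentially the paper's own proof: the paper disposes of this corollary in one line by citing Lemma \ref{tauretracte} together with functoriality of $\tau$, and your three steps (functoriality giving $\tau(i)$ as a section, the identification $\tau(X_{\bullet}\times[0,1])=\tau(X_{\bullet})\times[0,1]$ giving the homotopy, and applying $\tau$ to the fixing equation) are exactly the details that one-liner leaves implicit. No difference in approach, just a more careful write-up.
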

\begin{proof}
It is a consequence of the lemma \ref{tauretracte} and that $\tau$ is a functor so it preserves the identies. 
\end{proof}
\begin{corollary}\label{tauretracte2}
If $X$ is a constant simplicial topological space, then $\mathcal{Q}_{X}:\tau(X)\rightarrow X$ admit a strong section. 
\end{corollary}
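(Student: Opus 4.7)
My plan is to compute $\tau(X)$ explicitly when $X$ is a constant simplicial topological space and then exhibit the deformation retraction by hand. Because $X$ is constant, every $X_n = X$ and every degeneracy $s_i \colon X_n \to X_{n+1}$ is the identity, so the subsets $X_{n,i} = s_i X_n$ all equal $X$ and hence $X_{n,\sigma} = X$ for every $\sigma \subseteq \{1,\ldots,n\}$. Applying Definition \ref{tau} with the maximal choice $\sigma = \{1,\ldots,n\}$ identifies $\tau_n(X)$ with the entire product $[0,1]^n \times X$. Under this identification the map $\mathcal{Q}_{X,n}$, which by construction collapses $[0,1]^\sigma$ and injects $X_{n,\sigma}$ into $X_n$, is simply the projection onto the second factor.

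Next I would define the section level-wise by $i_n \colon X \to \tau_n(X) = [0,1]^n \times X$, $x \mapsto ((0,\ldots,0), x)$. Then $\mathcal{Q}_{X,n} \circ i_n = id_X$, and the family $(i_n)$ assembles into a simplicial map $i \colon X \to \tau(X)$ because the face and degeneracy maps on $\tau(X)$ act trivially on the $X$-factor (as $X$ is constant) and on the cube factor by standard operations of coordinate insertion and projection, all of which fix the origin $(0,\ldots,0)$. For the homotopy I would take $H_n \colon \tau_n(X) \times [0,1] \to \tau_n(X)$, $((t,x), s) \mapsto ((1-s)t, x)$, with $(1-s)t$ denoting coordinate-wise scaling. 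One checks directly that $H_n(-,0) = id_{\tau_n(X)}$, $H_n(-,1) = i_n \circ \mathcal{Q}_{X,n}$, and $H_n((0,x), s) = (0,x)$ for every $s$, so the homotopy fixes $i_n(X)$ throughout. The family $(H_n)$ is itself natural in the simplicial direction because coordinate-wise scalar multiplication commutes with the cube-level operations underlying the simplicial structure on $\tau(X)$.

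The main point that requires care is the verification of simplicial compatibility for both $i$ and $H$. This reduces to a combinatorial check that the structural maps of $\tau(A_\bullet)$ act on the cube factor by operations that both fix the origin and commute with global scaling by $(1-s)$. The constancy of $X$ is precisely what makes the $X$-factor contribution trivial, leaving a purely cubical verification that is routine from Segal's explicit construction recorded in Definition \ref{tau}.
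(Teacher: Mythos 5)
Your proposal is correct and follows essentially the same route as the paper: identify $\tau_{n}(X)=[0,1]^{n}\times X$ for constant $X$, take the section given by the canonical (identity) inclusion of $X$ at a corner of the cube, and contract the cube fiberwise, which the paper leaves implicit in its one-line proof. The only point to watch is that the corner and the contraction must match Segal's actual conventions for the simplicial operators on the cube coordinates (which the paper does not reproduce), but this only affects the explicit formulas, not the argument.
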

\begin{proof}
The section  $i: X\rightarrow \tau(X)$ is induced by the identity on $X$. To show that it is a strong section, it is suffissant  to see that $\tau_{n}(X)=[0,1]^{n}\times X$ by definition.
\end{proof}


\section{Pushouts in $\Cat_{\V}$}\label{pushout}
We define and compute some (simple) pushouts in the category of small enriched categories
$\V-\Cat$. In our example $\V$ is the category $\sSet$ or $\Top$. For more details see 
(\cite{lurie}, A.3.2).\\
\begin{definition}
Let $U: \V\rightarrow \Cat_{\V}$ be a functor defined as follow:\\
For each object $S\in\V$, $U(S)$ is the enriched category with two objects $x$ and $y$
such that $\Map_{U(S)}(x,y)=S.$
\end{definition}
Let $f: S\rightarrow T$ be a morphism in $\V$ and $\C$ an enriched category on $\V$.
We want to describe explicitly the following pushout diagram:
 
 $$\xymatrix{
  US\ar[r]^{h}\ar[d]^{Uf} &  \C\ar[d]   \\
    UT\ar[r]&  \D
  }
  $$

It is enough claire that the objects of $\C$ and $\D$ are the same. The difficult par is
to define $\Map_{\D}$.\\
Let $w,~z \in \C$ and define the following sequence of objets in $\V$:
\begin{eqnarray*} 
M^{0}_{\C}& = & \Map_{\C}(w,z). \\ 
M^{1}_{\C} & = & \Map_{\C}(y,z)\times T\times\Map_{\C}(w,x). \\ 
M^{2}_{\C} & = & \Map_{\C}(y,z)\times T\times\Map_{\C}(y,x) \times T\times\Map_{\C}(w,x).\\
\dots
\end{eqnarray*} 
More generally, an object of $M_{\C}^{k}$ is given by a finite sequence of the form

$$ (\sigma_{0},\tau_{1},\sigma_{1},\tau_{2},\dots,\tau_{k},\sigma_{k})$$
 where 
 $$\sigma_{0}\in \Map_{\C}(y, z ),~\sigma_{k}\in \Map_{\C}(w, x),~ \sigma_{i} \in\Map(y, x)$$
  for  $0< i  < k$, and $\tau_{i}\in T$ for  $0< i\leq k.$ \\

 We define $\Map_{\D}(w,z)$ as a quotient $\bigsqcup_{k} M_{\C}^{k}$ relative to the
  following relations:
  $$(\sigma_{0} , \tau_{1} , \dots , \sigma_{k}) \sim (\sigma_{0} , \tau_{1} , \dots, \tau_{j-1} , \sigma_{j-1} \circ h(\tau_{j} ) \circ \sigma_{j} , \tau_{j+1} , \dots , \sigma_{k}), $$
  when $\tau_{j}$  is an element of $S\subset T.$ 

The category $\D$ is equiped with the following associative composition:
  $$(\sigma_{0} , \tau_{1} , \dots, \sigma_{k}) \circ (\sigma_{0}^{'} ,  \tau_{1}^{'} ,  \dots,,\sigma_{l}^{'} ) = (\sigma_{0} ,  \tau_{1} ,  \dots, ,  \tau_{k} , \sigma_{k}\circ \sigma_{0}^{'} , \tau_{1}^{1} ,  \dots, \sigma_{l}^{'} ).$$
  Observe that there is a natural filtration on  $\Map_{\D}(w,z)$:
  $$\Map_{\C}(w, z ) = \Map_{D} (w, z )^{0}\subset \Map_{\D} (w, z )^{1} \subset \dots$$
  where  $\Map_{D} (w, z )^{k}$ is defined as image of
  $\bigsqcup_{0\leq i\leq k} M_{\C}^{i}$ in $\Map_{\D}(w,z)$ and 
  $$\bigcup_{k}\Map_{\D}(w,z)^{k}= \Map_{D}(w,z).$$
 The most important fact is that $\Map_{\D}(w,z)^{k}\subset\Map_{\D}(w,z)^{k+1}$ is constructed as  
 pushout of  the inclusion: 
  $  N^{ k+1}_{\C} \subset M^{k+1}_{\C}, $ where $N^{k+1}_{\C}$ is a subobject of $M^{k+1}_{\C}$ of $(2m+1)$-tuples  
  $ (\sigma_{0}, \tau_{1} , . . . , \sigma_{m})$ such that  $\tau_{i}\in S$ for at less one $i$.


\subsection{Monads}
The main goal of this section is to generalize the section 2 de l'article \cite{dwyer1980} to the categories 
enriched over  $\Top$.\\
 Every adjunction define a monad and a comonad. We are interested on the particular adjunction
$$  \xymatrix{ \mathcal{O}-\mathbf{Graph}_{\Top} \ar@<2pt>[r]^{L} & \mathcal{O}-\Cat_{\Top}  \ar@<2pt>[l]^{U} }$$
We have a monad $T=UL$ and a comonad $F=LU$. The multiplication on  $T$ is denoted by $\mu: TT\rightarrow T$ and the unity $\eta: id\rightarrow T$, 
the comultiplication by $\psi:F\rightarrow FF$ and finally the counity by $\phi:F\rightarrow id$. The
$T-$algebras are exactly those graphs which have a structure of a category (composition).

\begin{notation}
The category of small categories enriched over $\Top$ and with fixed set of objects $\mathcal{O}$ is
noted by $\mathcal{O}-\Cat_{\Top}$.\\
We note by $\mathcal{O}-\sCat_{\Top}$ the category of presheaves $[\Delta^{op},\mathcal{O}-\Cat_{\Top}]$ and\\
  $\mathcal{O}-\mathbf{sGraph}_{\Top}$ the category of prescheaves $[\Delta^{op},\mathcal{O}-\mathbf{Graph}_{\Top}]$.\\
If we note $[\Delta^{op},\Top]$ by $\mathbf{sTop}$ then we have
$$\mathcal{O}-\sCat_{\Top}=\mathcal{O}-\Cat_{\mathbf{sTop}},$$ 
and 
$$\mathcal{O}-\mathbf{sGraph}_{\Top}=\mathcal{O}-\mathbf{Graph}_{\mathbf{sTop}}.$$ 
\end{notation}


\subsubsection{Simplicial resolution }

Let $\C$ be an object of  $\mathcal{O}-\Cat_{\Top}$, 

We define the iterated composition of $F$ by :
$$F^{k}=\underbrace{ F\circ F\cdots\circ F }_{k}.$$
The comonad $F$ gives us a simplicial resolution $\C$ (cf \cite{dwyer1980}) defined as follow:
$$F_{k}\C=F^{k+1}\C,$$
With faces and degeneracies:
$$\xymatrix{
   F_{k}\C\ar[rr]^-{d_{i}=F^{i}\phi F^{k-i}} & & F_{k-1}\C  \\
       F_{k}\C\ar[rr]^-{s_{i}=F^{i}\psi F^{k-i}} & & F_{k+1}\C
  }$$
   
The category of compactly generated spaces $\Top$ is a simplicial model category (tensored and cotensored over 
$\sSet$) So we have :
 \begin{enumerate}
 \item In $\mathcal{O}-\sCat_{\Top}$ we have the morphism $f:F_{\bullet }\C\rightarrow \C$, where $\C$ is sow as a constant object in 
 $\mathcal{O}-\sCat_{\Top}$ and t $f_{k}=\phi^{k+1}$.
 \item The morphism $f$ admit a section $i:\C\rightarrow F_{\bullet}\C$ in the category  $\mathbf{Graph}_{\mathbf{sTop}}$.
 The section  $i$ is induced by the unity of the monad $T$ i.e., $\eta_{U\C}: U\C\rightarrow ULU \C$; 
 \item The adjunction 
$$\xymatrix{
[\Delta^{op}, \Top]\ar@<1ex>[r]^-{|-|} & \Top, \ar@<1ex>[l]^-{(-)^{\Delta}}
}$$ 
induce the following adjunction
$$\xymatrix{
\mathcal{O}-\Cat_{\mathbf{sTop}}\ar@<1ex>[r]^-{|-|} &\mathcal{O}-\Cat_{\Top}, \ar@<1ex>[l]^-{(-)^{\Delta}}
}$$ 
since the realization functor is monoidal.
 \item The realization of the morphism $f$ in  $\mathcal{O}-\sCat_{\Top}$ induce a weak equivalence i.e., $ |f|:\Map_{|F_{\bullet}\C|}(a,b)\rightarrow \Map_{\C}(a,b)$ is a weak equivalence in $\Top$ for all  $a,b \in  \mathcal{O}$.
 \end{enumerate}
\begin{remark}
The realization functor $|-|$ does not  "see" the category structure, but only the graph structure.
\end{remark}

More generally, for all $\C,~\D$ in $\mathcal{O}-\Cat_{\Top} $ the following morphism:
$$ \xymatrix{ F_{\bullet}(\C)\star\D \ar[r] & \C\star\D }$$
 admit a strong  section  $ \C\star\D\rightarrow F_{\bullet}\C\star\D$ in the category $\mathcal{O}-\mathbf{sGraph}_{\Top}$. In fact, The category 
 $\mathcal{O}-\mathbf{Graph}_{\Top}$ is monoidal (nonsymmetric) with monoidal product $\times_{\mathcal{O}}$
  which is a generalization of  (\cite{Mac},II, 7). A topologically enriched category is a monoid with respect to this monoidal product. The free product $\C\star\D$ is 
  constructed in  $\mathcal{O}-\mathbf{Graph}_{\Top}$ as
   $$\mathcal{O}^{c}\sqcup_{\mathcal{O}}\C^{'}\sqcup_{\mathcal{O}} \D^{'}\sqcup_{\mathcal{O}}(\C^{'}\times_{\mathcal{O}}\C^{'})\sqcup_{\mathcal{O}}(\D^{'}\times_{\mathcal{O}}\D^{'})\sqcup_{\mathcal{O}}(\C^{'}\times_{\mathcal{O}}\D^{'})\sqcup_{\mathcal{O}}(\D^{'}\times_{\mathcal{O}}\C^{'})\dots$$  
  where $\C^{'}$ (resp. $\D^{'}$) is a correspondant  graph of $ \C$ (resp. $\D$) without identities and 
   $\mathcal{O}^{c}$ is the trivial category obtained from the set $\mathcal{O}.$
  So  $ \C\star\D\rightarrow F_{\bullet}(\C)\star\D $ is induced by the section  $ i: \C\rightarrow F_{\bullet}\C $ and $id:\D\rightarrow \D$.
  consequently the morphism 
  $$ \Map_{\C\star\D}(a,b)\rightarrow \Map_{|F_{i}(\C)\star\D|}(a,b)= \Map_{|F_{i}(\C)|\star\D}(a,b)$$
  is a weak equivalence in $\Top$ for all objects $a,~b \in  \mathcal{O}$.

\begin{lemma}\label{grapheq}
Let $\C\rightarrow \D$ a weak equivalence  in $ \mathcal{O}-\Cat_{\Top}$ and  let $\mathbf{\Gamma}$ a graph in $ \mathcal{O}-\mathbf{Graph}_{\Top}$, the the induced morphism :
$$ L(\mathbf{\Gamma})\star\C\rightarrow  L(\mathbf{\Gamma})\star\D $$
is a weak equivalence in the category $\mathcal{O}-\Cat_{\Top}$.
\end{lemma}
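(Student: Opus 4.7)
The plan is to use the comonad resolution $F_\bullet\C\to\C$ from the previous subsection (with $F=LU$) to reduce the statement to the case of free categories, where the free product $\star$ admits an entirely explicit graph-level description. Concretely, I would form the commutative square
$$\xymatrix{
|F_\bullet\C \star L(\mathbf{\Gamma})|\ar[r]\ar[d] & |F_\bullet\D \star L(\mathbf{\Gamma})|\ar[d] \\
\C \star L(\mathbf{\Gamma})\ar[r] & \D \star L(\mathbf{\Gamma})
}$$
whose two vertical arrows induce weak equivalences on mapping spaces by the discussion immediately preceding the lemma: the morphism $F_\bullet(-)\star L(\mathbf{\Gamma})\to(-)\star L(\mathbf{\Gamma})$ admits a strong section in $\mathcal{O}-\mathbf{sGraph}_\Top$ whose realization is a weak equivalence. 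By two-out-of-three, it then suffices to prove that the top horizontal arrow is a weak equivalence.

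For this, I would first show that the simplicial morphism $F_\bullet\C\star L(\mathbf{\Gamma})\to F_\bullet\D\star L(\mathbf{\Gamma})$ is levelwise a weak equivalence. At each simplicial degree $k$, $F_k\C=L(UF^k\C)$ is the free category on its underlying graph. The free product of two free categories is itself free on the disjoint union of their underlying graphs, so
$$F_k\C\star L(\mathbf{\Gamma})=L(UF^k\C\sqcup_\mathcal{O}\mathbf{\Gamma}),\qquad F_k\D\star L(\mathbf{\Gamma})=L(UF^k\D\sqcup_\mathcal{O}\mathbf{\Gamma}).$$
The mapping spaces of $L(-)$ are disjoint unions of iterated monoidal products of graph mapping spaces, so the underlying-graph weak equivalence $UF^k\C\to UF^k\D$ (which follows inductively, since $U$ preserves weak equivalences by definition and $L$ sends weak equivalences of graphs to weak equivalences on mapping spaces via finite products of weak equivalences in $\Top$) induces a weak equivalence on the mapping spaces of the free products.

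Finally, I would convert this levelwise weak equivalence into a weak equivalence of ordinary realizations using Segal's good-realization machinery. Applying the $\tau$-replacement, Corollary \ref{doublerel2} yields a weak equivalence $|\tau(F_\bullet\C\star L(\mathbf{\Gamma}))|\to|\tau(F_\bullet\D\star L(\mathbf{\Gamma}))|$. Combining this with the natural weak equivalence $||X_\bullet||\to|\tau(X_\bullet)|$ of Lemma \ref{doublerel1}(3), together with the fact that the simplicial mapping spaces produced by the comonad bar construction are good in Segal's sense (their degeneracies $F^i\psi F^{k-i}$ are closed inclusions) so that $||-||\to|-|$ is a weak equivalence by Lemma \ref{doublerel}(2), one concludes that the top arrow in the square is a weak equivalence, and two-out-of-three finishes the proof.

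The main obstacle is precisely this final step: the naive realization $|-|$ does not preserve levelwise weak equivalences in general, and a secondary check is needed to ensure that the bar-construction simplicial object is good enough that $|-|$ and $||-||$ agree up to weak equivalence on it; the $\tau$-replacement together with Segal's $||-||$ is the tool that bridges this gap.
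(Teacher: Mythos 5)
Your argument is sound in outline, but it is not the paper's proof of this lemma: it is essentially the paper's proof of the later, more general Lemma \ref{tech5}, imported here with the levelwise input made trivial by freeness. The paper proves Lemma \ref{grapheq} directly, with no realization machinery at all: it decomposes $L(\mathbf{\Gamma})=\star_{(a,b)}\mathbf{\Gamma}_{a,b}$ and treats one factor at a time via the explicit pushout description of Section \ref{pushout}, i.e.\ Lemma \ref{tech2} with $S=\emptyset$ and $T=X$ (the edge space of $\mathbf{\Gamma}$); with no relations imposed, $\Map_{L(\mathbf{\Gamma})_{a,b}\star\C}(w,z)=\bigsqcup_{k}M^{k}_{\C}$ is a coproduct of finite products of mapping spaces of $\C$ and copies of $X$, hence mapped by a weak equivalence to the corresponding $\bigsqcup_{k}M^{k}_{\D}$; the cases $a\neq b$ and $a=b$ are handled separately and one concludes by a (possibly transfinite) composition over the pairs $(a,b)$. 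Your route instead resolves the moving factor by the comonad bar construction, uses $L(\mathbf{\Gamma}_{1})\star L(\mathbf{\Gamma}_{2})=L(\mathbf{\Gamma}_{1}\sqcup_{\mathcal{O}}\mathbf{\Gamma}_{2})$ (legitimate, since $L$ is a left adjoint and $\star$ is the coproduct in $\mathcal{O}-\Cat_{\Top}$) to get the levelwise equivalence, and then pushes through Segal's machinery; this is non-circular, since it uses neither \ref{grapheq2} nor \ref{tech5}, but it pays for the easy lemma with the heavy technology the paper deliberately reserves for \ref{tech5}, whereas the paper's elementary computation is precisely what later feeds that resolution argument. One caveat on your final step: to use $||-||\rightarrow|-|$ from Lemma \ref{doublerel}(2) you must check that the bar construction is good in Segal's sense, meaning the degeneracies are closed cofibrations (NDR pairs), not merely closed inclusions as you assert; this is plausible here (they include coproduct summands of free mapping spaces) but is an unproved extra claim, and the cleaner course -- the one the paper takes in \ref{tech5} -- is to avoid goodness entirely by running the whole comparison through $\tau$: the strong sections of the augmentations survive $\tau$ and realization (\ref{tauretracte1}, \ref{tauretracte2}, \ref{rel3}), Corollary \ref{doublerel2} handles the levelwise equivalence, and two-out-of-three closes your square without ever comparing $||-||$ with $|-|$.
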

\begin{proof}
It is enough to prove  that $ \C^{'}=L(\mathbf{\Gamma})_{a,b}\star\C\rightarrow  L(\mathbf{\Gamma})_{a,b}\star\D=\D^{'} $ is an equivalence for all  $(a,b)\in \mathcal{O}\times\mathcal{O}$.
If $a \neq b$, it is a direct consequence of the lemma  \ref{tech2}, where we replace $S$ by $\emptyset$ and  $T$ by  $X$. So $\Map_{\C^{'}}(w,z)=\bigsqcup_{k}M^{k}_{\C}$ 
and respectively  $\Map_{\D^{'}}(w,z)=\bigsqcup_{k}M^{k}_{\D}$.
But $M^{k}_{\C}$ is equivalent to $M^{k}_{\D}$ since $\C$ is equivalent to $\D$. We conclude that $\Map_{\C^{'}}(w,z)$ is equivalent to $\Map_{\D^{'}}(w,z).$

If $a = b$, we note the edges from $a$ to $a$ of the graph $\mathbf{\Gamma}$ by $X$. Then we use the precedent case if we remark that  $\C^{'}=L(\mathbf{\Gamma})_{a,b}\star\C$ is simply the following pushout:
     $$\xymatrix{
  U(\emptyset)\ar[r]^{f}\ar[d] &  \C\ar[d]_{g}   \\
U(X) \ar[r]^{\alpha} & \C^{'}
  }
  $$
The morphsim $f$ send the two objects of  $U(\emptyset)$ to $a\in\C$,
so, by the lemma $\ref{tech2}$ we have that  $ L(\mathbf{\Gamma})_{a,a}\star\C\rightarrow  L(\mathbf{\Gamma})_{a,a}\star\D $
 is a weak equivalence.
consequently $ L(\mathbf{\Gamma})\star\C\rightarrow  L(\mathbf{\Gamma})\star\D $ is a weak equivalence by a possibly transfinite composition of weak equivalences.
\end{proof}


 \begin{lemma}\label{tech1}
  Let $i:~X\rightarrow Y$ an inclusion and a weak equivalence of topological spaces and $i(X)$ colsed in  $Y$  such that there exists a  homotopy 
   $H:Y\times[0,1]\rightarrow Y$ which verify the following conditions:
  \begin{enumerate}
  \item $H(-,0)= id_{Y}$
  \item $H(i(x),t) = i(x)$ for all $x\in X.$
  \item $H(-,1)=s$ with $s\circ i=id_{X}.$
  \end{enumerate} 
  
  then the morphism $g$ of the pushout  :
     $$\xymatrix{
  X\ar[r]^{\psi}\ar[d]^{i} &  Z\ar[d]_{g}   \\
Y \ar[r]^{\alpha}\ar@/^/[u]^{s}&  D
  }
  $$
 is a weak equivalence.
   
  \end{lemma}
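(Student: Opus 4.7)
The plan is to show that $g$ is in fact a strong deformation retract inclusion, hence a homotopy equivalence and thus a weak equivalence. The key is to transport the deformation retract data $(s, H)$ from the pair $(Y, i(X))$ to the pair $(D, g(Z))$ via the universal property of the pushout.

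First I would construct a retraction $r : D \rightarrow Z$ with $r \circ g = id_{Z}$. The two maps $id_{Z} : Z \rightarrow Z$ and $\psi \circ s : Y \rightarrow Z$ agree on $X$, since $\psi \circ s \circ i = \psi \circ id_{X} = \psi$. By the universal property of the pushout applied to the diagram defining $D$, they induce a unique continuous map $r : D \rightarrow Z$. By construction, $r$ restricted along the inclusion $g : Z \rightarrow D$ is $id_{Z}$.

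Next I would build a homotopy $K : D \times [0,1] \rightarrow D$ from $id_{D}$ to $g \circ r$. Here I use the crucial fact that in the category $\Top$ of compactly generated weakly Hausdorff spaces, the functor $(-) \times [0,1]$ is a left adjoint (since $[0,1]$ is locally compact Hausdorff), so $D \times [0,1]$ is the pushout of $Y \times [0,1] \leftarrow X \times [0,1] \rightarrow Z \times [0,1]$. Define two homotopies: the constant homotopy $K_{Z}(z,t) = g(z)$ on $Z \times [0,1]$, and $K_{Y}(y,t) = \alpha(H(y,t))$ on $Y \times [0,1]$. On $X \times [0,1]$ the two agree, because condition (2) gives $K_{Y}(i(x),t) = \alpha(H(i(x),t)) = \alpha(i(x)) = g(\psi(x)) = K_{Z}(\psi(x),t)$. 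The pushout universal property yields the desired $K$. Evaluating at the endpoints, condition (1) gives $K(-,0) = id_{D}$, while condition (3) together with $s \circ i = id_{X}$ yields $K(-,1) = g \circ r$ (reading $H(-,1)$ as $i \circ s$ via the closed inclusion $i(X) \subseteq Y$).

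Thus $g : Z \rightarrow D$ is a strong deformation retract inclusion, so in particular a homotopy equivalence, and therefore a weak equivalence in $\Top$. The main technical obstacle is really the justification that forming the pushout commutes with taking the product with $[0,1]$; this relies on the category-theoretic fact that in compactly generated spaces the product with a locally compact Hausdorff space preserves colimits. The closedness of $i(X)$ in $Y$ ensures the pushout is well behaved (in particular that $g$ is a closed inclusion), but is not otherwise needed for the homotopy-theoretic argument.
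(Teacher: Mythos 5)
Your proposal is correct and follows essentially the same route as the paper: you induce the retraction $r=s'$ from $id_{Z}$ and $\psi\circ s$, glue the constant homotopy on $Z$ with $\alpha\circ H$ on $Y$ to get a deformation of $D$ onto $g(Z)$, and conclude that $g$ is a homotopy equivalence. The only (welcome) difference is that you justify continuity of the glued homotopy via the pushout universal property and the fact that $-\times[0,1]$ preserves pushouts in compactly generated spaces, where the paper instead invokes the closedness of $i(X)$ in a more informal gluing argument.
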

  \begin{proof}
  We remind that $D=Y\cup_{X} Z.$  To simplify notation be denote the image of  $y\in Y$ in $D$ by $y$, respectively $z$ for the image of $z\in Z$ in $D$ .

  Since $i$ admit a  retraction, $g$ admit also a retraction noted by $s^{'}$ and induced by $s$. It means that we 
  have an inclusion  of $Z$ in $D$ via $g$ beacause of 
  $s^{'}\circ g=id_{Z}$.  In fact, $s^{'}: D\rightarrow Z$ is defined as follow:
   \begin{enumerate}
  \item $s^{'}(z)=z$ for $z\in Z.$
  \item $s^{'}(y)= s(y)$ for $y\in Y.$
   \end{enumerate}
  This new section  $s^{'}$ is well defined by  $s^{'}(\psi(x))=\psi(x)$ and  $s^{'}(i(x))=i(x)$ but in $D$ we have $i(x)=\psi(x)$ for all $x\in X.$ We resume the situation in the following diagram

     $$\xymatrix{
   X\ar[r]^{\psi} \ar@{^{(}->}[d]^{i}& Z \ar@{^{(}->}[d]^{g}\ar@/^/[rdd]^{id}  \\
     Y \ar[r]^{\alpha} \ar@/^/[u]^{s}\ar@/_/[rrd]^{\psi\circ s}  & Y\cup_{X} Z  \ar[rd]^{s^{'}} \\
     & & Z   
  }$$
   
  We construct the homotopy $H^{'}: D\times [0,1]\rightarrow D$ as follow:
  \begin{enumerate}
  \item $H^{'}(-,0) = id_{D}.$
  \item $H^{'}(z, t) = z $ if $z$ is in  $ Z$ .
  \item $H^{'}(y,t)=  H(y,t)$ for all $y$ in $Y$.
  \end{enumerate}
 
  This homotopy is well defined. In fact, it is enough to prove that the gluing operation is well definde. We have $\psi(x)=i(x)$ in $D$, then $H^{'}(i(x),t)=H(i(x),t)=i(x)$ by definition, 
  on the other hand $H^{'}(\psi(x),t)=\psi(x).$
  Since  $i(X)$ is closed in $Y$, then $i(X)$ is closed in  $D$. We conclude that $H^{'}$ is well defined. More over $H^{'}(y,0)=H(y,0)=y$ and so $H^{'}(-,0)$ is the identity.
  
  By simple computation of $H^{'}(-,1): D\rightarrow D$ we have that  $H^{'}(z,1) = z$ for all $z\in Z$ and $ H^{'}(y,1)= H(y,1)= s(y)$ for all $y\in Y$. So, $H^{'}(-,1)=s{'}$ , 
  That means the morphism $s^{'}:D\rightarrow Z\subset D$ is a weak equivalence
  since it is hompotopic to the identity. Consequently $g$ est aussi is a homotopy equivalence because  $s^{'}\circ g=id.$

   \end{proof}



 \begin{lemma}\label{tech2}
 With the precedent notation of graphs \ref{pushout}, if we pose $f: ~S=|\Lambda^{n}_{i}|\rightarrow T=|\Delta^{n}|$, 
 then,
 $\Map_{\C}(w,z)\subset \Map_{\D}(w,z)$ is a weak equivalence  $\forall w,~z\in \C$. 
 \end{lemma}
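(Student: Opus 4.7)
The strategy is to use the explicit computation of the pushout from Section \ref{pushout}: $\Map_{\D}(w,z)$ carries a natural filtration
$$\Map_{\C}(w,z) \;=\; \Map_{\D}(w,z)^{0} \;\subset\; \Map_{\D}(w,z)^{1} \;\subset\; \Map_{\D}(w,z)^{2} \;\subset\; \cdots,$$
whose colimit is $\Map_{\D}(w,z)$, and where each successive inclusion $\Map_{\D}(w,z)^{k} \hookrightarrow \Map_{\D}(w,z)^{k+1}$ arises as a pushout in $\Top$ of the inclusion $N^{k+1}_{\C} \hookrightarrow M^{k+1}_{\C}$. My plan is to verify that every stage of the filtration is a weak equivalence and then conclude by transfinite composition along closed inclusions.

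To handle a single stage I would invoke Lemma \ref{tech1}. The required input is that $N^{k+1}_{\C} \hookrightarrow M^{k+1}_{\C}$ is a closed inclusion equipped with a retraction together with a homotopy from the identity of $M^{k+1}_{\C}$ to this retraction, fixing $N^{k+1}_{\C}$ pointwise. I would construct these directly from the strong deformation retraction of the horn inclusion $|\Lambda^{n}_{i}| \hookrightarrow |\Delta^{n}|$: choose a retraction $r:|\Delta^{n}|\to|\Lambda^{n}_{i}|$ and a homotopy $h:|\Delta^{n}|\times [0,1]\to|\Delta^{n}|$ with $h_{0}=\mathrm{id}$, $h_{1}=i\circ r$, and $h$ constant on $|\Lambda^{n}_{i}|$. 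Applying $r$ and $h$ to the appropriate $T$-factors of $M^{k+1}_{\C}=\Map_{\C}(y,z)\times T\times\Map_{\C}(y,x)\times\cdots\times T\times\Map_{\C}(w,x)$, leaving the $\Map_{\C}$-factors untouched, should produce the needed retraction and homotopy onto $N^{k+1}_{\C}$. Lemma \ref{tech1} will then certify that each $\Map_{\D}(w,z)^{k}\hookrightarrow\Map_{\D}(w,z)^{k+1}$ is a homotopy equivalence, hence a weak equivalence. Passing to the colimit, and using that transfinite composition of weak equivalences along closed inclusions in $\Top$ is again a weak equivalence, yields the result.

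The main obstacle is the construction of the strong deformation retraction $M^{k+1}_{\C} \searrow N^{k+1}_{\C}$. Because $N^{k+1}_{\C}$ is a \emph{union} over $j$ of subspaces where the $j$-th $T$-coordinate lies in $|\Lambda^{n}_{i}|$, and not a single product, the naive recipe of applying $r$ and $h$ simultaneously to all $T$-factors fails: it lands in the strictly smaller subspace where \emph{every} $\tau_{j}$ is in $|\Lambda^{n}_{i}|$, and the associated homotopy is not constant on points of $N^{k+1}_{\C}$ whose coordinates already in $|\Lambda^{n}_{i}|$ are only some of the $\tau_{j}$. Overcoming this requires either an inductive decomposition of the union $N^{k+1}_{\C}=\bigcup_{j}\{\tau_{j}\in|\Lambda^{n}_{i}|\}$ into a sequence of pushouts of $|\Lambda^{n}_{i}|\hookrightarrow|\Delta^{n}|$ taken one coordinate at a time, or an explicit retraction which only acts on the ``last'' $T$-coordinate not already in $|\Lambda^{n}_{i}|$, so that the accompanying homotopy is stationary on all of $N^{k+1}_{\C}$. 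Either way, this combinatorial/topological construction is the delicate part of the argument.
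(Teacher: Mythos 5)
Your plan is essentially the paper's proof: use the filtration $\Map_{\C}(w,z)=\Map_{\D}(w,z)^{0}\subset\Map_{\D}(w,z)^{1}\subset\cdots$, identify each stage as a pushout of $N^{k+1}_{\C}\subset M^{k+1}_{\C}$, feed that inclusion into Lemma \ref{tech1} via a strong deformation retraction coming from the retraction of $|\Lambda^{n}_{i}|\hookrightarrow|\Delta^{n}|$, and finish by transfinite composition. The one point you leave open --- the construction of a homotopy of $M^{k+1}_{\C}$ onto the union $N^{k+1}_{\C}$ that is stationary on $N^{k+1}_{\C}$ --- is exactly the point the paper also treats lightly: it factors out the $\Map_{\C}$-factors, reduces to the pair $\bigl(T^{k},\ \bigcup_{j}\{\tau_{j}\in S\}\bigr)$, verifies the deformation retraction explicitly only for $n=1$, $k=2$ (where the subspace $T\times S\cup_{S\times S}S\times T$ is two intervals glued at a point inside the square $T\times T$), and asserts the general case is similar. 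Of your two proposed ways to close this, the first is the right one and is what makes the paper's ``the rest is similar'' rigorous: writing $N^{k}\to T^{k}$ as the iterated pushout-product of $S\to T$ (equivalently, adjoining one $T$-coordinate at a time), each step is a pushout of an acyclic cofibration, hence a strong deformation retract inclusion in $\Top$, and these compose. Your second suggestion --- a retraction acting only on the ``last'' $T$-coordinate not already in $|\Lambda^{n}_{i}|$ --- should be discarded: the index of that coordinate jumps as the point varies, so the recipe does not define a continuous map (nor a continuous homotopy), and no such coordinate exists on $N^{k}$ itself where all the relevant coordinates may already lie in $S$. With the first strategy carried out, your argument is complete and coincides with the paper's.
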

 \begin{proof}
 We remind here that $\V=\Top$. Since all objects in  $\Top$ are fibrant, so  $f$ admit a section $s$.
 On the other hand, the inclusion $  N^{ k+1}_{\C} \subset M^{k+1}_{\C} $ is a weak equivalence and admit also a section. We will do the demonstration for the case  $k=2$. We use the following notations:

    \begin{eqnarray} 
 A_{0}=\Map_{\C}(y,z)\times S\times\Map_{\C}(y,x) \times S\times\Map_{\C}(w,x) \\ 
 A_{1}=\Map_{\C}(y,z)\times S\times\Map_{\C}(y,x) \times T\times\Map_{\C}(w,x) \\ 
  A_{2}=\Map_{\C}(y,z)\times T\times\Map_{\C}(y,x) \times S\times\Map_{\C}(w,x) 
\end{eqnarray} 
The evident inclusions  are weak equivalences which admit sections induced by  $s$ 
 $A_{0}\rightarrow A_{i},~i=1,2. $

We define the complement of  $N^{ 2}_{\C}$, which consist on  tuples $(a,s_{1},b,s_{2},c) $ in 
$\Map_{\C}(y,z)\times T\times\Map_{\C}(y,x) \times T\times\Map_{\C}(w,x) $ such that
$s_{1}, ~s_{2}\notin S$. We will do our argument in low dimension $n=1$, the rest is similar.
The space $T\times S\cup_{S \times S}S\times T$ is a gluing  of two  intervals $[0,1]$  at the point 0 and  $T\times T$ is simply $[0,1]\times [0,1].$
If we pose  $f:~X=T\times S\cup_{ S \times S}S\times T \rightarrow T\times T=Y$, we are exactly in the situation of
the lemma \ref{tech1} i.e., 
there exist a homotopy between  $X$ and $Y$ which is identity map on $X$. If we rewrite $N^{ 2}_{\C}$ by
$$N^{ 2}_{\C}=A_{1}\bigcup_{A_{0}}A_{2}= X\times  \Map_{\C}(y,z)\times\Map_{\C}(y,x)\times\Map_{\C}(w,x), $$
and $M^{2}_{\C}$ by 
$$ M^{2}_{\C}= Y\times  \Map_{\C}(y,z)\times\Map_{\C}(y,x)\times\Map_{\C}(w,x),$$
The the induced morphism $ N^{ 2}_{\C}\rightarrow M^{2}_{\C}$ verify the condition of the lemma  \ref{tech1}.
Consequently, the pushout of $  N^{ 2}_{\C} \subset M^{2}_{\C} $ by  $N^{2}_{\C}\rightarrow \Map_{D}(w,z)^{1}$ is also a weak equivalence. 
Which means that the  inclusion $\Map_{D}(w,z)^{1}\subset \Map_{D}(w,z)^{2}$  is a weak equivalence.
By the same argument we prove the statement for all $k$ and use the fact that a transfinite composition of weak equivalences is a weak equivalence.  So $$\Map_{\C}(w,z)\dots\subset\Map_{\D}(w,z)^{k}\subset\Map_{\D}(w,z)^{k+1}\dots \subset \Map_{\D}(w,z)$$
 is a weak equivalence. 

 \end{proof}
 
 \begin{corollary}\label{grapheq2}
Let  $\M$ in $ \mathcal{O}-\Cat_{\Top}$, then $F_{i}\M\star\C\rightarrow F_{i}\M\star\D$ is a weak equivalence in $ \mathcal{O}-\Cat_{\Top}$ for all $0\leq i.$
\end{corollary}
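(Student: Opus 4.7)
The key observation is that, by definition, $F_{i}\M = F^{i+1}\M = (LU)^{i+1}\M = L(UF^{i}\M)$, so $F_{i}\M$ is itself a free category of the form $L(\mathbf{\Gamma}_{i})$, where $\mathbf{\Gamma}_{i} := UF^{i}\M$ is an object of $\mathcal{O}-\mathbf{Graph}_{\Top}$. This is exactly the situation to which Lemma \ref{grapheq} applies.

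The plan is therefore a one-line reduction: given the weak equivalence $\C \to \D$ in $\mathcal{O}-\Cat_{\Top}$ supplied by the context, apply Lemma \ref{grapheq} with the graph $\mathbf{\Gamma} = \mathbf{\Gamma}_{i} = UF^{i}\M$. This immediately yields that
\[
L(\mathbf{\Gamma}_{i}) \star \C \longrightarrow L(\mathbf{\Gamma}_{i}) \star \D
\]
is a weak equivalence in $\mathcal{O}-\Cat_{\Top}$, and since $L(\mathbf{\Gamma}_{i}) = F_{i}\M$, this is exactly the assertion.

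There is essentially no obstacle here: the only subtle point is noticing that every term $F_{i}\M$ of the cotriple resolution lies in the image of the free functor $L$ (because $F = LU$, so the outermost operation in $F^{i+1} = L \circ (U F^{i})$ is $L$). Once that is recognized, the corollary follows from the previous lemma without further work.
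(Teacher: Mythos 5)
Your proof is correct and follows essentially the same route as the paper: both arguments reduce the statement to Lemma \ref{grapheq} by observing that $F=LU$, so each stage of the resolution is a free category on a graph. In fact your formulation is slightly more careful, since you take $\mathbf{\Gamma}=UF^{i}\M$ (the paper writes $\mathbf{\Gamma}=U\M$, which literally covers only $i=0$), which is exactly the graph needed for general $i$.
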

\begin{proof}
It is enough to see that $F=LU$ and applied the lemma \ref{grapheq} by putting $\mathbf{\Gamma}=U\M$.
\end{proof}
  
\begin{lemma}\label{tech5}
Let $\C,~\D$ and $\M$ in $ \mathcal{O}-\Cat_{\Top}$ , and $\C\rightarrow\D$ a weak equivalence. Then
$$\M\star\C\rightarrow\M\star\D$$
is a weak equivalence.
\end{lemma}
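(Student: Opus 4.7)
The plan is to resolve $\M$ by the comonad $F = LU$ and reduce the statement to Corollary \ref{grapheq2}, transporting everything across realization via the good-realization machinery of Section 3. The key point is that each level of the resolution is a free category, so Corollary \ref{grapheq2} applies there, and the resolution can then be collapsed back to $\M$ after realization.

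Concretely, I would first form $F_{\bullet}\M \in \mathcal{O}-\sCat_{\Top}$ and consider the induced map $F_{\bullet}\M \star \C \to F_{\bullet}\M \star \D$. At each simplicial level $n$, the category $F_n\M = L(UF^n\M)$ is free on a graph, so Corollary \ref{grapheq2} gives that $F_n\M \star \C \to F_n\M \star \D$ is a weak equivalence in $\mathcal{O}-\Cat_{\Top}$; fixing $a,b \in \mathcal{O}$, this is a levelwise weak equivalence of simplicial topological spaces $\Map_{F_{\bullet}\M \star \C}(a,b) \to \Map_{F_{\bullet}\M \star \D}(a,b)$. Simultaneously, the paragraph preceding Lemma \ref{grapheq} shows that the augmentation induces, at each level $n$, a weak equivalence $\Map_{\M \star \C}(a,b) \to \Map_{F_n \M \star \C}(a,b)$; hence the augmentation $\Map_{F_{\bullet}\M \star \C}(a,b) \to \Map_{\M \star \C}(a,b)$ (target viewed as a constant simplicial object) is a levelwise weak equivalence, and similarly for $\D$.

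Next I would transport these levelwise weak equivalences through realization. Applying $\tau$ levelwise and then the ordinary realization $|-|$, Corollary \ref{doublerel2} turns each of the three levelwise weak equivalences into a weak equivalence in $\Top$. The constant simplicial space $\mathrm{const}\,\Map_{\M \star \C}(a,b)$ is good since all its degeneracies are identities, so Lemma \ref{doublerel}(2) combined with Lemma \ref{doublerel1}(3) provides a zigzag of weak equivalences identifying $|\tau(\mathrm{const}\,\Map_{\M \star \C}(a,b))|$ with $\Map_{\M \star \C}(a,b)$ itself, and similarly for $\D$. Assembling everything into the commutative square
$$
\xymatrix{
|\tau(\Map_{F_{\bullet}\M \star \C}(a,b))| \ar[r] \ar[d] & |\tau(\Map_{F_{\bullet}\M \star \D}(a,b))| \ar[d] \\
\Map_{\M \star \C}(a,b) \ar[r] & \Map_{\M \star \D}(a,b)
}
$$
in which the top horizontal and both verticals are weak equivalences, two-out-of-three forces the bottom map to be a weak equivalence as well. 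Condition WT2 is automatic because the set of objects $\mathcal{O}$ is fixed, so essential surjectivity is trivial.

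The main obstacle I anticipate is bookkeeping rather than any new categorical input: one must verify that forming mapping spaces commutes with the level-wise free product $\star$ in the simplicial setting (which relies on the monoidality of $|-|\colon \mathcal{O}-\Cat_{\mathbf{sTop}} \to \mathcal{O}-\Cat_{\Top}$ noted after Lemma \ref{rel2}), and that the identification $|\tau(\mathrm{const}\,X)| \simeq X$ is correctly set up via the good-realization zigzag. Once these are in place, no further ingredient beyond Corollary \ref{grapheq2} and the $\tau$-and-$|-|$ machinery should be required.
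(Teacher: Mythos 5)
Your overall strategy coincides with the paper's: resolve $\M$ by the comonad $F=LU$, apply Corollary \ref{grapheq2} levelwise to see that $F_{\bullet}\M\star\C\to F_{\bullet}\M\star\D$ is a levelwise weak equivalence on mapping spaces, push this through $\tau$ and $|-|$ via Corollary \ref{doublerel2}, and finish by two-out-of-three on a square comparing the resolution with the constant objects $\M\star\C$ and $\M\star\D$. However, the step you use to collapse the resolution contains a genuine error: the augmentation is \emph{not} a levelwise weak equivalence. At level $n$ the map in question is induced by the counit of the free--forgetful adjunction, $F^{n+1}\M\star\C\to\M\star\C$, and the mapping spaces of a free category on a graph are large disjoint unions of products of edge spaces. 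Already for a one-object category, i.e. a topological monoid $M$, the counit $\coprod_{k\geq 0} M^{\times k}\to M$ is not a weak equivalence, nor is the unit section $M\hookrightarrow\coprod_{k}M^{\times k}$ onto the $k=1$ summand. So your claim that ``the augmentation $\Map_{F_{\bullet}\M\star\C}(a,b)\to\Map_{\M\star\C}(a,b)$ is a levelwise weak equivalence'' fails, and with it your appeal to Corollary \ref{doublerel2} for the vertical maps of your square.

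What is true, and what the paper actually uses, is weaker but sufficient: the augmentation $F_{\bullet}\M\star\C\to\M\star\C$ admits a \emph{strong section} in $\mathcal{O}-\mathbf{sGraph}_{\Top}$ (the extra degeneracy coming from the unit of the adjunction), hence is a homotopy equivalence of underlying simplicial graphs rather than a levelwise equivalence. Since $\tau$ preserves homotopy equivalences and strong sections (Lemma \ref{tauretracte}, Corollary \ref{tauretracte1}) and the realization preserves homotopy equivalences (Corollary \ref{rel3}), the realized augmentation $|\tau(F_{\bullet}\M\star\C)|\to|\tau(\M\star\C)|$ is a homotopy equivalence, and Corollary \ref{tauretracte2} handles the comparison $|\tau(\mathrm{const})|\to\mathrm{const}$; together these supply exactly the vertical equivalences in your square. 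With that repair the rest of your argument (top map via \ref{grapheq2} and \ref{doublerel2}, two-out-of-three, WT2 trivial because the object set $\mathcal{O}$ is fixed) goes through as in the paper. The gap is therefore local but real: the resolution collapses onto $\M\star\C$ only after realization, by the strong-section argument, not level by level.
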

\begin{proof}
We have seen by \ref{grapheq2} that
$$h_{i}:F_{i}(\M)\star\C\rightarrow F_{i}(\M)\star\D$$ 
is a weak equivalence for all  $0\leq i$.
Consider the following commutative diagram in $\mathcal{O}-\mathbf{Graph}_{\mathbf{sTop}}$:
$$\xymatrix {
     \tau(F_{\bullet}(\M)\star\C)\ar[rr]^{\tau(h_{\bullet})} \ar[dd]_{\tau(t)} \ar[dr]^{f_{\bullet}} && \tau(F_{\bullet}(\M)\star\D) \ar[dr]^{g_{\bullet}} \ar[dd]^>>>>{\tau(s)}  |{}\hole \\
    & F_{\bullet}(\M)\star\C \ar[rr]^{h_{\bullet}~~~~~~~~~} \ar[dd]_>>>>{t} && F_{\bullet}(\M)\star\D \ar[dd]^{s} \\
    \tau(\M\star\C) \ar[rr]^{~~~~~~~\tau(h)} |{}\hole \ar[dr]^{f} && \tau(\M\star\D) \ar[rd]^{g} \\
    & \M\star\C \ar[rr]^{h} &&  \M\star\D \\
  }$$

The morphism $t$ and $s$ are homotopy equivalences. By \ref{rel3}, the morphisms  $|t|$ and  $|s|$ are also homotopy equivalences (of underling graphs).\\
The morphisms  $\tau(t)$ and $\tau(s)$ are homotopy equivalences by \ref{tauretracte1}. And by \ref{rel3}, the morphisms  $|\tau(t)|$ and  $|\tau(s)|$ are homotopy equivalences.\\
The morphism  $|\tau(h_{\bullet})|$  is a weak equivalence \ref{doublerel2}.\\
By the property "2 out of 3" $|\tau (h)|$ is a weak equivalence.\\
 The morphisms  $f$ and $g$ are homotopy equivalences by \ref{tauretracte2}. So $|f|$ and  $|g|$ are also homotopy equivalences by  \ref{rel3}.\\
 We conclude by the property  "2 out of 3" that  $|h|$ is a weak equivalence and so  $h$ is a weak equivalence.
 
 \end{proof}
\section{ $\infty$-categories (quasi-categories)}
In the mathematical literature, there are many models for  $\infty$-categories, for example the enriched categories on Kan complexes \cite{bergner},
 The categories enriched over $\Top$ as we saw before, and the the quasi-categories defined by Joyal. More precisely Joyal constructed a new model structure on  $\sSet$, voir \cite{Joyalcrm}, where the fibrant object are by definition quasi-categories ($\infty$-categories).
We introduce the notion of  \textbf{quasi-groupoïde} 
  which generalize the notion of groupoids in the classical setting of categories. We remind olso the definition of 
  \textbf{coherent nerve} for the enriched categories on $\sSet$ and $\Top$.

\begin{definition}\label{q-cat}
Une $\mathbf{quasi-category}$ is a simplicial set $X$ which has a lifting property for all $0<i<n$:
\begin{equation}\label{q-cat1}
\xymatrix{
    \Lambda^{n}_{i}\ar[r]^-{\forall}\ar[d] & X \ar[d] \\
    \Delta^{n} \ar[r] \ar@{.>}[ru]^-{\exists}& \ast
  }
  \end{equation}
\end{definition}

It is important to remark that the condition $0<i<n$ codify the law composition up to homotopy. Sometimes, we will call such simplicial complexes by weak Kan complexes. For example, if $\C$ is a classical category, then the nerve $\N\C$ is a quasi-category with an additional property: The lifting, is in fact, unique (cf \cite{lurie}, proposition 1.1.2.2). More over a simplicial set is isomorphic to the nerve of a category
 $\C$ if and only if  the lifting \ref{q-cat1} exists and it is unique. 
\begin{lemma}\label{groupoide1}
A category $\C$ is a groupoid iff $\N\C$ is a Kan complex. 
\end{lemma}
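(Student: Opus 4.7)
The plan is to leverage the fact, recalled just above the lemma, that for any ordinary category $\C$ the nerve $\N\C$ is a quasi-category in which every inner horn admits a \emph{unique} filler. Consequently the Kan condition reduces to asking for fillers of the outer horns $\Lambda^n_0 \hookrightarrow \Delta^n$ and $\Lambda^n_n \hookrightarrow \Delta^n$, and once unique inner-horn filling is available the outer-horn conditions for $n\geq 3$ follow from the $n=2$ case by the coskeletal nature of the nerve (higher simplices are determined by their $1$- and $2$-skeleton, whose data is already present in the horn when $n\geq 3$). The whole biconditional thus rests on outer $2$-horn filling, together with the trivial $\Lambda^1_i$ case handled by identity morphisms.

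For the forward direction, assume $\C$ is a groupoid. A map $\Lambda^2_0 \to \N\C$ is a pair of morphisms $f\colon a\to b$ and $g\colon a\to c$ with common source ($d_2=f$, $d_1=g$); the desired filler is the $2$-simplex corresponding to the composable chain $b \xrightarrow{f^{-1}} a \xrightarrow{g} c$, whose third edge $d_0$ is $gf^{-1}\colon b\to c$. The dual construction fills $\Lambda^2_2$ using the inverse of the $d_0$-edge. Thus $\N\C$ satisfies the Kan condition in every dimension.

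For the converse, assume $\N\C$ is a Kan complex. Given any $f\colon a\to b$ in $\C$, I would construct a left inverse from the horn $\Lambda^2_0 \to \N\C$ with $d_2 = f$ and $d_1 = \mathrm{id}_a$: a Kan filler is a composable pair $a \xrightarrow{f} b \xrightarrow{g} a$ whose composite equals $\mathrm{id}_a$, giving $gf = \mathrm{id}_a$. A right inverse $h\colon b\to a$ with $fh = \mathrm{id}_b$ is obtained dually from the horn $\Lambda^2_2$ with $d_0 = f$ and $d_1 = \mathrm{id}_b$. Since $f$ admits both a left and a right inverse, it is an isomorphism, and $\C$ is a groupoid.

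The main obstacle is essentially bookkeeping: keeping the face-index conventions straight, and justifying the reduction from arbitrary $n$ to $n=2$ via the coskeletal nature of $\N\C$. The substantive content is entirely two-dimensional, and it is no accident that the outer $2$-horns are exactly the conditions encoding existence of one-sided inverses.
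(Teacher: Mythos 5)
Your proof is correct and takes essentially the same route as the paper: the converse uses outer $2$-horn fillers with an identity edge to produce a left and a right inverse for each morphism, and the forward direction fills outer horns using invertibility (the paper is even terser here, simply asserting this). One small caveat: for $n=3$ the missing outer face $d_k$ is itself a $2$-simplex not contained in $\Lambda^{3}_{k}$, so the coskeletality reduction needs one extra line (the commutativity of that remaining triangle is obtained by cancelling an invertible edge), but this is exactly the bookkeeping you flag and does not affect the argument.
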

\begin{proof}
If $\C$ is a groupoid, then $\N\C$ admit a lifting with respect to $\Lambda^{n}_{n}\rightarrow\Delta^{n}$ and $\Lambda^{n}_{0}\rightarrow\Delta^{n}$ 
simply because all arrows in $\C$ are invertible. So
$\N\C$ is a Kan complex.\\
If $\N\C$  is a Kan complex, we have a lifting with respect to $\Lambda^{2}_{2}\rightarrow\Delta^{2}$. 
That means, every diagram in $\C$
$$\xymatrix{
    & x\ar[d]^{id}  \\
    y \ar[r]^{g} \ar@{.>}[ur]^{f} & x
  }
$$
can be completed by a unique arrow $f:y\rightarrow x$, so $g$ so $g$ is right invertible. We show that $g$ is left invertible using the lifting property with respect to $\Lambda^{2}_{0}\rightarrow\Delta^{2}$. So $\C$ is a groupoid.
\end{proof}
The precedent lemma suggest us a definition for an $\infty$-groupoïde.
\begin{definition}
An $\infty$-category (quasi-category) $X$ is an $\infty$-groupoid (quasi-groupoid)  if it is a Kan complex.
\end{definition}
\begin{example}
Let $Y$ be a topological space, the simplicial set  $\sing Y$ is a Kan complex. so we can see every topological space as an $\infty$-groupoid.
\end{example}

\begin{theorem} \cite{Joyalcrm} (section 6.3)
The category $\sSet$ admit a model structure where the cofibrations are the monomorphisms, the fibrant objects are the quasi-categories, the fibrations are the pseudo-fibrations and the weak equivalences are the categorical equivalences. This is a cartesian cosed model structure. This new structure is noted  by  $(\sSet, \mathbf{Q}).$
\end{theorem}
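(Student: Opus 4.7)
The plan is to invoke a recognition theorem for cofibrantly generated (in fact combinatorial) model categories, in the style of Jeff Smith's theorem. Since $\sSet$ is locally presentable and the monomorphisms form the saturated class generated by the boundary inclusions $\partial\Delta^{n}\hookrightarrow\Delta^{n}$ for $n\geq 0$, one may fix the cofibrations in advance. The task then splits into specifying a suitable class $\mathrm{W}$ of weak equivalences and verifying Smith's axioms.

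First I would define a categorical equivalence $f\colon X\to Y$ to be a map such that for every quasi-category $Z$, the induced map $Z^{Y}\to Z^{X}$ is an equivalence of quasi-categories, where the latter notion is made precise by requiring the associated map of maximal Kan subcomplexes to be a weak homotopy equivalence of Kan complexes. This definition visibly gives closure under 2-out-of-3 and under retracts. It also shows that the fibrant objects, i.e.\ those $X$ such that $X\to\ast$ has the right lifting property against all generating trivial cofibrations, are exactly the quasi-categories.

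Next I would identify a set $\mathrm{J}$ of generating trivial cofibrations, consisting of the inner horn inclusions $\Lambda^{n}_{i}\hookrightarrow\Delta^{n}$ for $0<i<n$, together with the endpoint inclusion $\{0\}\hookrightarrow J$, where $J$ denotes the nerve of the free-living isomorphism. That inner anodyne maps are categorical equivalences follows from Joyal's theorem characterising inner fibrations by right lifting against inner horns together with a Leibniz pushout-product argument. That $\{0\}\hookrightarrow J$ is a categorical equivalence reflects the contractibility of $J$ as a quasi-category.

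The hardest part, and in my view the main obstacle, is to show that $\mathrm{W}$ is closed under pushout by cofibrations and under transfinite composition, equivalently that the saturated class generated by $\mathrm{J}$ lies inside $\mathrm{W}$. The most streamlined modern route passes through the Bergner model structure on $\Cat_{\sSet}$ and the prospective Quillen equivalence with the Joyal structure via the left adjoint $\mathfrak{C}$ and the homotopy coherent nerve; Joyal's own proof instead works directly in $\sSet$ using minimal fibrations and a careful analysis of inner fibrations. Once this is in place, the cartesian closed structure reduces to verifying the pushout-product axiom on generators: the product of two monomorphisms is a monomorphism by a classical combinatorial argument, and the pushout-product of an inner horn with a boundary inclusion is again inner anodyne, while the pushout-product with $\{0\}\hookrightarrow J$ is handled by direct inspection.
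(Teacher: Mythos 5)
There is a genuine gap. Note first that the paper does not prove this statement at all: it is quoted from Joyal (\cite{Joyalcrm}, section 6.3), and immediately afterwards the author even remarks that he does not know an explicit cofibrant generation for $(\sSet,\mathbf{Q})$. Your sketch, by contrast, asserts that the inner horn inclusions $\Lambda^{n}_{i}\rightarrow\Delta^{n}$ ($0<i<n$) together with $\{0\}\rightarrow J$ form a set of generating trivial cofibrations. This is not known, and in fact cannot work as stated: maps with the right lifting property against that set (``naive'' isofibrations) coincide with the Joyal fibrations only when the codomain is a quasi-category, so the saturated class generated by your $\mathrm{J}$ is strictly smaller than the class of trivial cofibrations. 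Consequently your identification of the fibrations with the pseudo-fibrations, and even of the fibrant objects, does not follow from the lifting properties you list; Joyal's characterization of fibrant objects and of fibrations between them is itself a nontrivial theorem (it uses the lifting of equivalences along inner fibrations), not a definition one can read off from a chosen $\mathrm{J}$. There is also a logical slip in the phrase ``equivalently that the saturated class generated by $\mathrm{J}$ lies inside $\mathrm{W}$'': Smith's theorem requires that $\mathrm{cof}\cap\mathrm{W}$ be closed under pushout and transfinite composition (plus accessibility/solution-set conditions on $\mathrm{W}$), which is not equivalent to $\mathrm{cell}(\mathrm{J})\subseteq\mathrm{W}$ for your particular set.

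More importantly, the part you yourself call the main obstacle --- that categorical equivalences interact correctly with cofibrations (closure of trivial cofibrations under pushout and transfinite composition, or Joyal's direct verification via minimal and inner fibrations, or the comparison with the Bergner structure through $\Xi$ and the coherent nerve) --- is exactly the content of the theorem, and your proposal only points at two possible references for it rather than proving anything. Finally, your definition of categorical equivalence (via exponentials $Z^{Y}\rightarrow Z^{X}$ into quasi-categories) is not the one the paper uses (the paper defines it by asking that $\Xi(f)$ be a Dwyer--Kan equivalence in $\Cat_{\sSet}$); these agree, but that agreement is again a theorem that your argument would need, e.g.\ to transport the 2-out-of-3 and retract closure and to make the cartesian-closedness argument match the statement. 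As it stands the proposal is a plausible outline of how one might organize a proof, but every load-bearing step is either deferred or rests on the unproved (and false in general) claim about the generating trivial cofibrations.
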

We don't know if the this new model structure is cofibrantly generated! We will explain later what we mean by categorical equivalences, but we don't describe explicitly the pseudo-fibration.
For each quasi-category $X$ (fibrant object in $(\sSet, \mathrm{Q})$), we can associate its homotopy category (in a classical sens)  noted $ \Ho X$. 
This theory was developed by Joyal, see for example \cite{Joyalcrm}.

\section{Some Quillen adjunctions }
In this paragraph, we describe different Quillen adjunction between $\sSet-\Cat$, $(\sSet,\mathbf{Q})$ and $(\sSet,\mathbf{K})$. 
\subsection{$\sSet-\Cat$ vs $(\sSet,\mathbf{Q})$}
The first adjunction is described  in details in \cite{lurie}. We start by some analogies between classical categories ann simplicial sets.
$$\xymatrix{
\sSet\ar@<1ex>[r]^-{\tau} & \Cat, \ar@<1ex>[l]^-{\N}
}$$ 
The right adjoint is the nerve and the left adjoint associate to each simplicial set its fundamental category. Note that 
this adjunction is not a Quillen adjunction for the two known model structure on $\Cat$  ( Thomason  structure and Joyal structure). We remind the  nerve functor is fully faithful and $ \tau~\N=id$.
The basic idea is to "extend" this adjunction to an adjunction between $(\sSet, \mathbf{Q})$ and the category $\Cat_{\sSet}$. If we use the standard nerve for the enriched categories on simplicial sets, by remembering only the  0-simplices, the we loose all the higher homotopical information. Because of that, we use an other strategy. First we define a left adjoint as follow
$$\Xi : (\sSet,\mathbf{Q})\rightarrow \sSet-\Cat$$
On $\Delta^{n}$, then we apply the left Kan extension.
\begin{definition}\cite{lurie} (1.1.5.1)
The enriched category $\Xi (\Delta^{n})$  has as objects the 0-simplices of $\Delta^{n}$, and
\[
\Xi (\Delta^{n})(i,j) = \left\{ \begin{array}{ll} 
 \N P_{i,j} & \hbox{if ~ $i\leq j$}\\ 
\emptyset  & \hbox{if~ $i>j$} 
\end{array} \right. 
\]

Where $P_{i,j}$ is the set partially ordered by inclusion:
$$\{I \subseteq J : (i, j \in I ) \land (\forall k \in I )[i\leq k \leq j ]\}.$$
\end{definition} 

\begin{definition}
The right adjoint to the functor  $\Xi$  is called the coherent nerve and  noted by $\widetilde{\N}$. It is defined by the following formula:
$$\widetilde{\mathrm{N}_{n}}\C=\homs_{\sSet}(\Delta^{n}, \widetilde{\N} \C):=\homs_{\sSet-\Cat}(\Xi(\Delta^{n}),\C).$$
\end{definition}
Now, we can define the categorical equivalences used in the model stucture $(\sSet, \mathbf{Q})$. We call a morphism of simplicial sets
$f: ~X\rightarrow Y$ une \textrm{categorical equivalence} if $\Xi(f):~\Xi(X)\rightarrow \Xi(Y)$ is an equivalence of enriched categories, i.e., if $\Map_{\Xi(X)}(a,b)\rightarrow \Map_{\Xi(Y)}(\Xi(f) a,\Xi(f) b)$ is a weak equivalence of simplicial sets for all  $a,~b$ and
$\pi_{0}\Xi(f):~\pi_{0}\Xi(X)\rightarrow \pi_{0}\Xi(Y)$ is a equivalence of classical categories.
\begin{theorem}\label{equivalence}
The following adjunction is a  Quillen equivalence  between the Joyal model structure $(\sSet, \mathbf{Q})$ \cite{Joyalcrm}, and the model category on $\Cat_{\sSet}$ defined in \cite{bergner}
$$\xymatrix{
\sSet\ar@<1ex>[r]^-{\Xi} & \sSet-\Cat .\ar@<1ex>[l]^-{\widetilde{\N}}
}$$ 
\end{theorem}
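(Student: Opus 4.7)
The plan is to verify the two conditions defining a Quillen equivalence: first that $(\Xi, \widetilde{\N})$ is a Quillen adjunction, and then that the derived unit and counit are weak equivalences.

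\textbf{Step 1: Quillen adjunction.} I would start by showing $\Xi$ sends generating cofibrations of $(\sSet,\mathbf{Q})$ to cofibrations in $\Cat_{\sSet}$. Since cofibrations in the Joyal model structure are monomorphisms, it suffices to check $\Xi(\partial \Delta^n) \to \Xi(\Delta^n)$. A direct inspection of the definition via $P_{i,j}$ shows this is a ``free'' map of simplicial categories in the sense of Bergner, i.e., it is obtained by freely adjoining morphisms, hence is a cofibration in $\Cat_{\sSet}$. For the trivial cofibration side, I would invoke Joyal's characterization: every trivial cofibration in $(\sSet,\mathbf{Q})$ can be built from inner horn inclusions $\Lambda^n_i \to \Delta^n$ (for $0<i<n$) together with the inclusion $\{0\} \to J$ where $J$ is the nerve of the free invertible arrow. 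Thus it suffices to prove that the images $\Xi(\Lambda^n_i) \to \Xi(\Delta^n)$ are Dwyer--Kan equivalences (and cofibrations). This is the most delicate technical input and would be handled by analyzing the explicit combinatorics of $\Xi(\Delta^n)(0,n)$ as a cube $(\Delta^1)^{n-1}$ and showing that removing the ``inner face'' corresponding to $\Lambda^n_i$ still gives a contractible subcomplex; all other mapping complexes are unchanged. This mirrors the proof in Lurie, HTT 2.2.5.7.

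\textbf{Step 2: Derived unit and counit.} Note that by the very definition of categorical equivalence, $\Xi$ preserves and reflects weak equivalences on all of $\sSet$. Hence one direction is automatic and we only need one of the two derived adjunction morphisms to be a weak equivalence. The cleanest target is the counit: for every fibrant (i.e., locally Kan) enriched category $\C$, I want to show
\[
\epsilon_{\C}: \Xi(\widetilde{\N}(\C)) \longrightarrow \C
\]
is a Dwyer--Kan equivalence. Essential surjectivity is immediate since $\widetilde{\N}_0 \C$ recovers the object set of $\C$. For full faithfulness, one must compare $\Map_{\Xi\widetilde{\N}\C}(a,b)$ with $\Map_{\C}(a,b)$. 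I would use the simplicial resolution $F_\bullet \C$ by free categories constructed in the preceding section: since both $\Xi$ and $\widetilde{\N}$ are compatible with this cotriple resolution, the mapping complexes of $\Xi\widetilde{\N}\C$ can be identified with the diagonal of a bisimplicial set whose columns are mapping spaces of free categories, and the counit is computed levelwise as an evaluation of a free word of composable maps. The fact that $\C$ is locally Kan (which is why fibrancy is required here) allows composition to be strictified up to weak equivalence.

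\textbf{Step 3: Conclusion.} Combining Steps 1 and 2: $(\Xi, \widetilde{\N})$ is a Quillen pair, $\Xi$ detects weak equivalences between all objects, and the counit on fibrant objects is a DK-equivalence. By the standard two-out-of-three argument applied to the triangle identities, the derived unit $X \to \widetilde{\N}\,\Xi(X)^{\mathrm{fib}}$ is also a categorical equivalence for every (automatically cofibrant) $X \in \sSet$. Therefore the adjunction is a Quillen equivalence.

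\textbf{Main obstacle.} The real work is Step 1's claim that $\Xi(\Lambda^n_i) \to \Xi(\Delta^n)$ is a Dwyer--Kan equivalence for $0<i<n$. The mapping complexes $\Xi(\Lambda^n_i)(0,n)$ are subcubes of $(\Delta^1)^{n-1}$ whose contractibility is a combinatorial statement about posets of chains avoiding a forbidden vertex. Handling this uniformly in $n$ and $i$, while ensuring cofibrancy is also preserved, is where the technical difficulty concentrates; everything else is bookkeeping built on the tools (good realization, free products, cotriple resolution) already assembled earlier in the paper.
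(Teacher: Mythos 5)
The paper gives no argument for this theorem: it states it and refers to \cite{lurie}, Theorem 2.2.5.1, so your sketch is being measured against that reference rather than against a proof in the text. Your overall architecture (Quillen adjunction, then the derived counit on fibrant objects, then deducing the derived unit by two-out-of-three using that $\Xi$ reflects weak equivalences) is the standard one, and Step 3 is correct: since categorical equivalences are \emph{defined} as the maps sent by $\Xi$ to Dwyer--Kan equivalences, reflection is tautological and the criterion ``left adjoint reflects weak equivalences $+$ derived counit is an equivalence $\Rightarrow$ Quillen equivalence'' applies. But Step 1 as written leans on a claim that is not available: the trivial cofibrations of $(\sSet,\mathbf{Q})$ are \emph{not} known to be cellularly generated by the inner horns $\Lambda^n_i\to\Delta^n$ together with $\{0\}\to J$; the right lifting property against that set characterizes fibrations only over quasi-categories, so ``every trivial cofibration can be built from'' them is unjustified. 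Fortunately the detour is unnecessary in this setting: with the paper's (Lurie's) definition of categorical equivalence, $\Xi$ preserves weak equivalences by definition, so the Quillen-pair step reduces to showing $\Xi$ carries monomorphisms to cofibrations of simplicial categories, which your free-generation observation does address. You should route around the horn argument rather than through it.

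The genuine gap is Step 2, which is the entire mathematical content of the theorem. Showing that the counit $\Xi\widetilde{\N}\C\to\C$ is a Dwyer--Kan equivalence for every locally Kan $\C$ is what occupies most of Section 2.2 of \cite{lurie} (via an explicit analysis of $\Xi(\Delta^n)$, a comparison of mapping spaces, and straightening; Dugger--Spivak's necklace calculus is the alternative). Your proposed mechanism --- that ``$\Xi$ and $\widetilde{\N}$ are compatible with the cotriple resolution $F_{\bullet}\C$,'' so that $\Map_{\Xi\widetilde{\N}\C}(a,b)$ is the diagonal of a bisimplicial object computed levelwise on free categories --- is asserted, not proved, and there is no evident reason it holds: $\widetilde{\N}$ is a right adjoint and does not commute with the realization of the simplicial resolution, and $\Xi$ of the coherent nerve of a free (levelwise) category has no simple levelwise description of its mapping complexes --- indeed the composite $\Xi\widetilde{\N}$ is exactly the functor whose mapping spaces you are trying to control. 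Without an actual comparison of $\Map_{\Xi\widetilde{\N}\C}(a,b)$ with $\Map_{\C}(a,b)$ the argument is incomplete at its central point, and, contrary to your closing assessment, it is here --- not in the horn-filling combinatorics of $\Xi(\Lambda^n_i)\to\Xi(\Delta^n)$ --- that the real difficulty of the theorem concentrates.
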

For the proof we refer to \cite{lurie} theorem 2.2.5.1.
\begin{corollary}
Let $\C$ an enriched category on Kan complexes, then the counity
$$\Xi\widetilde{\N}\C\rightarrow \C$$
is a weak equivalence of enriched categories.
\end{corollary}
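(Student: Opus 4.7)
The plan is to deduce this directly from Theorem \ref{equivalence} by exploiting the two favourable features of the model structures involved. On the left-hand side, the Joyal structure $(\sSet,\mathbf{Q})$ has all monomorphisms as cofibrations, so \emph{every} simplicial set is cofibrant; in particular $\widetilde{\N}\C$ is cofibrant. On the right-hand side, the Bergner model structure on $\Cat_{\sSet}$ has, by definition, as fibrant objects exactly those enriched categories whose hom-simplicial-sets are Kan complexes. Therefore the hypothesis that $\C$ is enriched over Kan complexes is exactly the statement that $\C$ is fibrant.

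First I would invoke the general principle for a Quillen equivalence $(F \dashv G)$: if $X$ is fibrant and $G(X)$ is cofibrant, then the underived counit $FG(X)\to X$ coincides with the derived counit (no cofibrant replacement of $G(X)$ is needed), and the derived counit at a fibrant object is a weak equivalence precisely because $(F,G)$ is a Quillen equivalence. I would then apply this principle with $F=\Xi$, $G=\widetilde{\N}$, and $X=\C$: cofibrancy of $\widetilde{\N}\C$ is automatic, fibrancy of $\C$ is the hypothesis, and the Quillen equivalence is Theorem \ref{equivalence}.

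Unpacking what ``weak equivalence'' means in the Bergner structure recovers the statement in concrete terms: the induced maps
\[
\Map_{\Xi\widetilde{\N}\C}(a,b)\longrightarrow \Map_{\C}(a,b)
\]
are weak equivalences of simplicial sets for all objects $a,b$, and $\pi_{0}(\Xi\widetilde{\N}\C)\to\pi_{0}\C$ is an equivalence of ordinary categories.

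There is essentially no obstacle here, since all the technical content has been absorbed into Theorem \ref{equivalence}; the only conceptual step is the identification of fibrant objects in the Bergner structure with categories enriched over Kan complexes, which is built into the definition of that model structure. If a fully self-contained argument were desired, one would instead verify directly that, in any Quillen equivalence, the counit at a fibrant object whose image under the right adjoint happens to be cofibrant is a weak equivalence, which is a standard and purely formal manipulation of the triangle identities together with the definition of Quillen equivalence.
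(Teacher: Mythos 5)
Your argument is correct and is exactly the deduction the paper intends: the corollary is stated as an immediate consequence of Theorem \ref{equivalence}, using that every object is cofibrant in $(\sSet,\mathbf{Q})$ and that a category enriched in Kan complexes is fibrant in the Bergner structure, so the underived counit computes the derived counit and is a weak equivalence. The only minor caveat is that the identification of Bergner-fibrant objects with categories enriched over Kan complexes is a theorem of \cite{bergner} rather than literally part of the definition, but this does not affect the argument.
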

\subsection{$(\sSet,\mathbf{Q})$ vs $(\sSet,\mathbf{K})$}
In this paragraph, we describe the Quillen adjunction  Between Joyal model structure on simplicial sets  and the classical model structure on $\sSet$ which we note by $(\sSet,\mathbf{K})$, $\mathbf{K}$ for Kan complexes. 
\begin{definition}
The functor $k:\Delta\rightarrow\sSet$ is defined by $k[n]=\widetilde{\Delta^{n}}$ for all $n\geq$, where  $\widetilde{\Delta^{n}}$ is the nerve of the free groupoid generated by
the category $[n]$.
If $X$ is a simplicial set, we define the functor $k^{!}:\sSet\rightarrow\sSet$ by :
$$ k^{!}(X)_{n}=\homs_{\sSet}(\widetilde{\Delta^{n}},X).$$

\end{definition}

The functor $k^{!}$ has a left adjoint $k_{!}$ which is the left Kan extension of  $k$. From the inclusion $\Delta^{n}\subset\widetilde{\Delta^{n}}$
 we obtain, for all $n$, a set morphism
$k^{!}(X)_{n}\rightarrow X_{n}$ which is  $n$-level of a simplicial morphism $\beta_{X}:~k^{!}(X)\rightarrow X$. More precisely, $\beta: ~ k^{!}\rightarrow id$ 
is a natural transformation. Dually, we define a natural transformation $\alpha:~id\rightarrow k_{!}$
\begin{theorem}
The adjoint functors
$$\xymatrix{
(\sSet,\mathbf{Kan})\ar@<1ex>[r]^-{k_{!}} & (\sSet,\mathbf{Q}) .\ar@<1ex>[l]^-{k^{!}}
}$$ 
is a Quillen adjunction. More over, $\alpha_{X}:~X\rightarrow k_{!}(X)$ is an equivalence for each $X.$ 
\end{theorem}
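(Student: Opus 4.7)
The plan is to split the statement into two independent claims: (i) the adjunction $(k_!, k^!)$ is Quillen when the source carries $\mathbf{Kan}$ and the target carries $\mathbf{Q}$, and (ii) the unit $\alpha_X$ is always a categorical equivalence. For (i), I would exploit the key observation that the model structures $(\sSet, \mathbf{K})$ and $(\sSet, \mathbf{Q})$ share the same cofibrations, namely the monomorphisms. Since both are cofibrantly generated in the relevant sense, it suffices to check the image of generating (trivial) cofibrations. As $k_!$ is a left adjoint it preserves colimits, so its behaviour is determined by its values on the $\Delta^n$.

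For the cofibration half, I would verify that $k_!(\partial\Delta^n) \to k_!(\Delta^n) = \widetilde{\Delta^n}$ is a monomorphism. Because $\partial\Delta^n$ is a colimit of representables and $k_!$ commutes with colimits, this reduces to a combinatorial check on $\widetilde{\Delta^n}$ (the nerve of the free groupoid on $[n]$), where the face inclusions $\widetilde{\Delta^{n-1}} \hookrightarrow \widetilde{\Delta^n}$ are visibly injective on simplices. For the acyclic half, the generating trivial cofibrations of $\mathbf{Kan}$ are the horn inclusions $\Lambda^n_i \hookrightarrow \Delta^n$ for all $0 \le i \le n$. We must show that $k_!(\Lambda^n_i) \to \widetilde{\Delta^n}$ is a categorical equivalence. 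Since $\widetilde{\Delta^n}$ is the nerve of a contractible groupoid, it is a contractible Kan complex; a direct analysis (or comparison via $\Xi$ with the Bergner model structure) shows $k_!(\Lambda^n_i)$ is likewise the nerve of a contractible groupoid, whence the map is a categorical equivalence.

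For (ii), I would first treat the representable case: $\alpha_{\Delta^n}: \Delta^n \to \widetilde{\Delta^n}$ is the canonical inclusion of the nerve of $[n]$ into the nerve of its free groupoid, and this is a categorical equivalence because it induces an equivalence of underlying categories after applying $\Xi$ (using Theorem \ref{equivalence}) — the enriched category $\Xi(\Delta^n)$ has contractible mapping spaces, and the groupoidification has the same objects and still contractible mapping spaces. To extend to an arbitrary simplicial set $X$, I would write $X$ as a colimit of representables and argue by cellular induction along a skeletal filtration; at each stage one pushes out along a monomorphism $\partial\Delta^n \hookrightarrow \Delta^n$, and the left-properness of the Joyal structure together with the representable case gives the required equivalence.

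The main obstacle will be the second step of paragraph~2, namely identifying $k_!(\Lambda^n_i)$ as equivalent to $\widetilde{\Delta^n}$ in the Joyal structure; a naive pushout description of $k_!(\Lambda^n_i)$ is combinatorially involved, so the cleanest route is probably to pass through $\Xi$ and work in $\sSet$-$\Cat$, where the analogous statement follows from fibrancy of nerves of groupoids. Extending $\alpha$ from representables to arbitrary $X$ is also delicate because categorical equivalences in $(\sSet,\mathbf{Q})$ are not straightforwardly closed under all colimits, so the cellular induction must be set up with care using left-properness and cofibrancy of every simplicial set.
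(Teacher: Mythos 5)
The paper gives no argument of its own here (it simply cites \cite{Joyalcrm}, 6.22), so your proposal must stand alone, and part (ii) of it fails. The unit $\alpha_X : X \to k_!(X)$ is \emph{not} a categorical equivalence in general; the correct (and Joyal's) statement is that it is a weak homotopy equivalence, i.e.\ an equivalence in $(\sSet,\mathbf{K})$. Your representable case is exactly where this breaks: $\alpha_{\Delta^1}$ is the inclusion $\Delta^1=\N [1]\hookrightarrow \widetilde{\Delta^1}=\N([1]^{'})$, which is the standard example of a weak homotopy equivalence that is not a weak categorical equivalence. Indeed $\Xi(\Delta^1)$ has $\Map(1,0)=\emptyset$ while after groupoidification this mapping space becomes nonempty (contractible), and on $\pi_0$ the map is $[1]\to[1]^{'}$, not an equivalence of categories because the arrow of $[1]$ is not invertible. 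Your phrase ``the groupoidification has the same objects and still contractible mapping spaces'' overlooks precisely the empty backward mapping spaces, i.e.\ the non-invertibility that is the whole difference between $\mathbf{Q}$-equivalences and $\mathbf{K}$-equivalences. The cellular induction you propose does prove the intended statement, but it must be run in the Kan model structure (where $\Delta^n$ and $\widetilde{\Delta^n}$ are both weakly contractible, every object is cofibrant and the structure is left proper), not in the Joyal structure.

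Part (i) has the right shape, but one step is wrong as stated: $k_!(\Lambda^n_i)$ is not the nerve of a contractible groupoid, because the nerve functor does not commute with the colimits assembling the horn from its faces (already $k_!(\Lambda^2_1)$, two copies of $\widetilde{\Delta^1}$ glued at a vertex, is not a nerve). What you need, and what is true, is only that $k_!(\Lambda^n_i)$ is weakly categorically contractible; this can be proved by induction, building any proper union of faces one face at a time and gluing the Joyal-contractible pieces $\widetilde{\Delta^{n-1}}$ along $k_!$ of proper unions of faces of $\Delta^{n-1}$ (using that $k_!$ preserves monomorphisms, every object is cofibrant, and the Joyal structure is left proper), after which two-out-of-three against $\widetilde{\Delta^n}\to\Delta^0$ gives that $k_!(\Lambda^n_i)\to\widetilde{\Delta^n}$ is a categorical equivalence for all $0\le i\le n$. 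Note this is where the real content of the theorem lies, since the outer horn inclusions themselves are not categorical equivalences, so $k_!$ must genuinely change them; also, preservation of monomorphisms by $k_!$ requires more than injectivity of the face inclusions $\widetilde{\Delta^{n-1}}\hookrightarrow\widetilde{\Delta^n}$ — one must check that $k_!(\partial\Delta^n)$ maps isomorphically onto the union of these subcomplexes — which is a genuine combinatorial verification (done in \cite{Joyalcrm}).
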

\begin{proof}
For the proof, see (\cite{Joyalcrm}, 6.22).
\end{proof}

\subsection{  $\infty$-groupoids}
In this paragraph, we define a notion of groupoid for categories enriched on simplicial sets or topological spaces, Which we compare with the notion of $\infty$-groupoid defined for quasi-categories.
\begin{definition}
An enriched category $\C$  on $\sSet$ (or $\Top$) is an $\infty$-groupoïde if  $\pi_{0}\C$ is a groupoid in the classical sense of categories.
If $\C$ is enriched on  $\sSet$ ($\Top$),  the $\infty$-groupoid $\C{'}$ associated  to $\C$ is a fibred product in $\Cat_{\sSet}$ (or $\Cat_{\Top}$):
  $$\xymatrix{
   \C^{'}=\iso\pi_{0}\C\times_{\pi_{0}\C} \C \ar[r]\ar[d]  & \C \ar[d] \\
    \iso\pi_{0}\C \ar[r]& \pi_{0}\C.
  }
  $$
\end{definition}
We remark that the functor $\pi_{0}:~\Cat_{\sSet}\rightarrow\Cat $ is a left adjoint, so it does not commute with limits 
in general. But the evident projection $pr: \pi_{0}\C^{'}\rightarrow\iso \pi_{0}\C$ is an isomorphism. In fact, if $w_{1}$ and $w_{2}$ are weak equivalences in $\Map_{\C}(a,b)$ and  $h$ is a homotopy between them 
(i.e. un 1-simplex  in $\Map_{\C}(a,b)$  such that the borders are $w_{1},~w_{2}$)  Then $h$ is also a homotopy in$\Map_{\C^{'}}(a,b)$. This prove that the projection $pr$ is fully faithful. the essential surjectivity  of $pr$ est evident.\\
We note by $G$ the functor which associate  to $\C$ its  $\infty$-groupoid $\C^{'}$. The full subcategory of  $\Cat _{\sSet}$ of  $\infty$-groupoids is noted by $\mathbf{Grp}_{\sSet}$. 
\begin{lemma}\label{groupoide13}
The functor $G: \sSet-\Cat\rightarrow\sSet-\mathbf{Grp}$ is the right adjoint of the inclusion, i.e.,
$$\homs_{\mathbf{Grp}_{\sSet}}(\C,G\D)=\homs_{\Cat_{\sSet}}(\C,\D)$$
$\forall \C\in \mathbf{Grp}_{\sSet}$ and $\D\in\Cat_{\sSet}$.
\end{lemma}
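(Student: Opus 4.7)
The plan is to verify the adjunction directly from the universal property of the pullback that defines $G\D = \iso\pi_0\D \times_{\pi_0\D} \D$. Given $\C \in \mathbf{Grp}_{\sSet}$ and an enriched functor $f:\C\to\D$, I will produce a unique enriched functor $\tilde f:\C\to G\D$ whose composition with the projection $G\D\to\D$ equals $f$, and observe that this assignment is natural and inverse to post-composition with $G\D\to\D$.

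The construction of $\tilde f$ proceeds by supplying the second pullback factor. Since $\C$ is an $\infty$-groupoid, $\pi_0\C$ is a classical groupoid, so every morphism there is invertible. Classical functors preserve invertible morphisms, hence $\pi_0 f:\pi_0\C\to\pi_0\D$ factors uniquely through the (full) subcategory inclusion $\iso\pi_0\D\hookrightarrow \pi_0\D$. Precomposing with the canonical $\C\to\pi_0\C$ (which lands in discretely enriched categories) yields an enriched functor $g:\C\to\iso\pi_0\D$. By construction the pair $(f,g)$ makes the defining square over $\pi_0\D$ commute, so the universal property of the pullback in $\Cat_{\sSet}$ produces a unique $\tilde f:\C\to G\D$ with $(\text{proj})\circ\tilde f=f$.

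For the inverse direction, any $\phi:\C\to G\D$ is determined by its two pullback projections. Setting $f=(\text{proj})\circ\phi$, the $\iso\pi_0\D$-component of $\phi$ is forced to equal the factorization constructed above, because $\iso\pi_0\D\hookrightarrow\pi_0\D$ is a monomorphism in $\Cat$ and both components must agree after pushing to $\pi_0\D$. Hence the map $f\mapsto \tilde f$ is a bijection, and its naturality in $\C$ and $\D$ follows directly from the universal property. Since $\mathbf{Grp}_{\sSet}\subset\Cat_{\sSet}$ is full, $\homs_{\mathbf{Grp}_{\sSet}}(\C,G\D)=\homs_{\Cat_{\sSet}}(\C,G\D)$, which gives the claimed adjunction.

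The only subtlety worth flagging is that the pullback defining $G\D$ must be formed in $\Cat_{\sSet}$ rather than set-theoretically; but because $\iso\pi_0\D$ and $\pi_0\D$ are discretely enriched, this pullback simply restricts each mapping simplicial set $\Map_\D(a,b)$ to the union of its path components whose classes in $\pi_0$ are isomorphisms of $\pi_0\D$. With this explicit description the universal property is transparent, and the preceding argument goes through without further incident. I do not expect any genuine obstacle; the content of the lemma is essentially that functors out of a groupoid land in invertibles, packaged via the pullback.
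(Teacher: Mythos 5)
Your argument is correct and follows essentially the same route as the paper: factor $\pi_{0}f$ through $\iso\,\pi_{0}\D$ using that $\pi_{0}\C$ is a groupoid, precompose with $\C\rightarrow\pi_{0}\C$, and invoke the universal property of the defining pullback to obtain the unique adjoint morphism, with bijectivity forced by the projections. Only a cosmetic slip: $\iso\,\pi_{0}\D\hookrightarrow\pi_{0}\D$ is the maximal subgroupoid, hence wide but not full in general; the uniqueness of the factorization only needs it to be a monomorphism, so nothing is affected.
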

\begin{remark}
We can do the same thing for $\Cat_{\Top}$.
\end{remark}

\begin{proof}
Let  $\C$ be an $\infty$-groupoid and let $\D\in\sSet-\Cat$.  A morphism $f:~\C\rightarrow\D$ define in a unique way an adjoint morphism $g:~\C\rightarrow G\D$ given by the universal map
  $$\xymatrix{
    \C \ar@/^/[rrd] ^{f}\ar@/_/[rdd]^{\phi} \ar@{.>}[rd]^-{\exists !g} \ar[dd]^{q}\\
  & G\D \ar[r] \ar[d] & \D \ar[d] \\
    \pi_{0}\C\ar[r]^-{\pi_{0}f}&  \iso~\pi_{0}\D  \ar[r] & \pi_{0}\D 
  }$$
  
 The morphism $\phi=\pi_{0}f\circ q$ exists and make the diagram commuting, since $\C$ is an   $\infty$-groupoid.
 
\end{proof}

Let $[n]^{'}$ denote the groupoid freely generated by the category $[n]$. An example of $\infty$-groupoid is the category $\Xi k_{!}\Delta^{n}$. In fact, 
$\Xi k_{!} \Delta^{n}= \Xi \N [n]^{'}\rightarrow [n]^{'}$ is a weak categorical equivalence and 
$[n]^{'}$ is fibrant. Since  $[n]^{'}$ is a groupoid groupoid, then $\pi_{0}\Xi k_{!} \Delta^{n}$ is also a groupoid .
\begin{lemma}
Let $\C$ a fibrant category enriched on $\sSet$, then $k^{!}\widetilde{\N}\C=k^{!} \widetilde{\N}\C^{'}$, where $\C^{'}$ is an $\infty$-groupoid associated to $\C$.
\end{lemma}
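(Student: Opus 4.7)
The plan is to unwind both sides using the two adjunctions $(k_{!},k^{!})$ and $(\Xi,\widetilde{\N})$, and then reduce the identity to Lemma \ref{groupoide13}, which realizes $G=(-)^{'}$ as the right adjoint to the inclusion $\mathbf{Grp}_{\sSet}\hookrightarrow \sSet-\Cat$. The key observation is that the test object appearing on each level of the hom-set description of both sides is itself an $\infty$-groupoid, so by the adjunction it cannot distinguish $\C$ from its associated $\infty$-groupoid $\C^{'}$.

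First I would compute, for each $n\geq 0$,
\[
(k^{!}\widetilde{\N}\C)_{n} \;=\; \homs_{\sSet}(\widetilde{\Delta^{n}},\widetilde{\N}\C) \;=\; \homs_{\sSet}(k_{!}\Delta^{n},\widetilde{\N}\C) \;=\; \homs_{\sSet-\Cat}(\Xi k_{!}\Delta^{n},\C),
\]
using the defining formula for $k^{!}$ together with the adjunctions $(k_{!},k^{!})$ and $(\Xi,\widetilde{\N})$, and similarly with $\C$ replaced by $\C^{'}$. So it suffices to exhibit a bijection between $\homs_{\sSet-\Cat}(\Xi k_{!}\Delta^{n},\C)$ and $\homs_{\sSet-\Cat}(\Xi k_{!}\Delta^{n},\C^{'})$, natural in $[n]$.

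For this, recall the remark immediately preceding the statement: $\Xi k_{!}\Delta^{n}=\Xi\N[n]^{'}$ is weakly equivalent to the classical groupoid $[n]^{'}$, and $\pi_{0}\Xi k_{!}\Delta^{n}=[n]^{'}$ is a groupoid, so $\Xi k_{!}\Delta^{n}$ is an $\infty$-groupoid in the sense of the preceding definition. Applying Lemma \ref{groupoide13} with the roles $\C\leftrightarrow\Xi k_{!}\Delta^{n}$ and $\D\leftrightarrow\C$ gives
\[
\homs_{\sSet-\Cat}(\Xi k_{!}\Delta^{n},\C) \;=\; \homs_{\mathbf{Grp}_{\sSet}}(\Xi k_{!}\Delta^{n},G\C) \;=\; \homs_{\sSet-\Cat}(\Xi k_{!}\Delta^{n},\C^{'}),
\]
where the second equality uses that $\mathbf{Grp}_{\sSet}$ is a full subcategory of $\sSet-\Cat$ and that $G\C=\C^{'}$ by definition. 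Since every identification above is natural in $[n]$, the bijections assemble into an equality of simplicial sets $k^{!}\widetilde{\N}\C=k^{!}\widetilde{\N}\C^{'}$.

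The fibrancy hypothesis on $\C$ does not enter the formal adjunction-chasing itself; rather, it is what makes the pullback construction of $\C^{'}$ behave correctly, since fibrancy ensures that each $\Map_{\C}(a,b)$ is a Kan complex so that $\pi_{0}\C$ really is the homotopy category and $\C^{'}$ is the genuine $\infty$-groupoid replacement. I do not foresee a serious obstacle: the whole argument is formal once one identifies $\Xi k_{!}\Delta^{n}$ as an $\infty$-groupoid and invokes Lemma \ref{groupoide13}. The only thing to verify is naturality in $n$, which is automatic from the fact that every bijection used is induced by an adjunction.
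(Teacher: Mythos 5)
Your argument is essentially identical to the paper's proof: both unwind $(k^{!}\widetilde{\N}\C)_{n}$ via the adjunctions $(k_{!},k^{!})$ and $(\Xi,\widetilde{\N})$ to $\homs_{\sSet-\Cat}(\Xi k_{!}\Delta^{n},\C)$, then use that $\Xi k_{!}\Delta^{n}$ is an $\infty$-groupoid together with the adjunction of Lemma \ref{groupoide13} to replace $\C$ by $\C^{'}$, and reverse the same steps. Your write-up is if anything slightly more explicit about which lemma supplies the key bijection and about naturality in $[n]$, but the route is the same.
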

\begin{proof}
Using the precedent adjunctions, we have for all  $n\geq 0$

\begin{eqnarray} 
(k^{!}\widetilde{\N}\C)_{n} & = &  \homs_{\sSet}(\Delta^{n},k^{!}\widetilde{\N}\C)\\ 
 & = & \homs_{\sSet}(k_{!}\Delta^{n},\widetilde{\N}\C)\\ 
& = &  \homs_{\sSet-\Cat}(\Xi k_{!}\Delta^{n},\C)
\end{eqnarray} 

But $\Xi k_{!}\Delta^{n}$ is an $\infty$-groupoid, so
\begin{eqnarray} 
\homs_{\sSet-\Cat}(\Xi k_{!}\Delta^{n},\C)& = & \homs_{\sSet-\mathbf{Grp}}(\Xi k_{!}\Delta^{n},\C^{'}) \\ 
 & = & \homs_{\sSet-\Cat}(\Xi k_{!}\Delta^{n},\C^{'})\\ 
& = &  \homs_{\sSet}(\Delta^{n}, k^{!}\widetilde{N}\C^{'})\\
&=& (k^{!}\widetilde{\N}\C^{'})_{n}
\end{eqnarray} 
we conclude that  $ k^{!}\widetilde{\N}\C^{'}=k^{!}\widetilde{\N}\C$.
\end{proof}

\begin{definition}\cite{bergner}
In Bergner's model structure on $\Cat_{\sSet}$ \cite{bergner} a morphism $F:\C\rightarrow\D$ is a fibration if
\begin{enumerate}
\item $\Map_{\C}(a,b)\rightarrow\Map_{\D}(Fa,Fb)$ is a fibration of simplicial sets for all $a,~b\in\C$.
\item $F$ has a lifting property of weak equivalences, i.e. it is  Grothendieck fibration for weak equivalences. 
\end{enumerate}
\end{definition}

\begin{corollary}\label{corinfini}
Let $\C^{'}$ the  $\infty$-groupoid associated to the enriched category $\C$ over Kan complexes (or $\Top$), then
$$\widetilde{\N} \C^{'}\rightarrow \N\iso~\pi_{0}\C$$
 pseudo-fibration (cf. \cite{Joyalcrm} ) in $(\sSet,\mathbf{Q})$. 
\end{corollary}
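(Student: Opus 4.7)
The strategy is to realize the given morphism as $\widetilde{\N}$ applied to a fibration between fibrant objects in Bergner's model structure on $\Cat_{\sSet}$, then invoke the right Quillen property of $\widetilde{\N}$ from Theorem~\ref{equivalence}; right Quillen functors preserve fibrations, and fibrations in $(\sSet,\mathbf{Q})$ are by definition the pseudo-fibrations.

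First I identify the target. For a classical (discretely enriched) category $\D$, a simplicial functor $\Xi\Delta^n\to\D$ is determined by its effect on objects and on $\pi_0$ of mapping spaces, and since $\pi_0\Xi\Delta^n=[n]$ this gives $\widetilde{\N}_n\D=\homs_{\Cat}([n],\D)=\N_n\D$; hence $\widetilde{\N}(\iso\pi_0\C)=\N\iso\pi_0\C$. Consequently, the morphism under study is $\widetilde{\N}$ applied to the canonical projection $p\colon\C'\to\iso\pi_0\C$ coming from the defining pullback square of $\C'$, where $\iso\pi_0\C$ and $\pi_0\C$ are viewed as discretely $\sSet$-enriched categories.

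I then check that $p$ is a fibration between fibrant objects. The category $\iso\pi_0\C$ is a classical groupoid, hence fibrant. The mapping spaces $\Map_{\C'}(a,b)$ are disjoint unions of connected components of the Kan complexes $\Map_{\C}(a,b)$, therefore Kan; together with the fact that $\C'$ is by construction an $\infty$-groupoid this yields fibrancy of $\C'$. For $p$: the local fibration condition reduces to showing that each $\Map_{\C'}(a,b)\to\Map_{\iso\pi_0\C}([a],[b])$ is a Kan fibration onto a discrete simplicial set, which holds because its fibers are connected components of Kan complexes. The lifting-of-equivalences clause is the real content: given $a\in\C'$ and an arrow $e\colon[a]\to\beta$ of $\iso\pi_0\C$, $e$ is an isomorphism in $\pi_0\C$ and thus represented by a weak equivalence $a\to a_1$ in $\C$, which lies in $\C'$ by the very definition of the pullback. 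Applying $\widetilde{\N}$ delivers the desired pseudo-fibration, and the topological case reduces to the simplicial one via the Quillen equivalence $\Cat_{\Top}\simeq\Cat_{\sSet}$ from Section~\ref{section1} (using $\sing$ to transfer the statement). The delicate point is precisely the lifting clause for $p$, which rests essentially on the pullback description of $\C'$ ensuring that every morphism projecting to an isomorphism in $\pi_0\C$ is already a morphism of $\C'$.
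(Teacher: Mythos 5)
Your proposal is correct, and its skeleton is the paper's: identify $\widetilde{\N}(\iso~\pi_{0}\C)$ with the ordinary nerve $\N\iso~\pi_{0}\C$ (because $\iso~\pi_{0}\C$ is discretely enriched), show that $p:\C^{'}\rightarrow\iso~\pi_{0}\C$ is a fibration between fibrant objects in Bergner's model structure, and then apply the right Quillen functor $\widetilde{\N}$ from Theorem \ref{equivalence}, using that fibrations in $(\sSet,\mathbf{Q})$ between quasi-categories are the pseudo-fibrations. The only real divergence is in how the fibration property of $p$ is obtained: the paper notes that $\C\rightarrow\pi_{0}\C$ is a fibration when $\C$ is fibrant and then pulls back along $\iso~\pi_{0}\C\rightarrow\pi_{0}\C$ (it invokes right properness, though what is actually used is just pullback-stability of fibrations), whereas you verify Bergner's two conditions for $p$ by hand: each $\Map_{\C^{'}}(a,b)\rightarrow\Map_{\iso\pi_{0}\C}(a,b)$ is a Kan fibration onto a discrete simplicial set because its fibers are unions of components of Kan complexes, and the equivalence-lifting clause holds because any $0$-simplex of $\Map_{\C}(a,a_{1})$ representing an isomorphism of $\pi_{0}\C$ already lies in $\C^{'}$ by the pullback description. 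Your direct check is slightly longer but makes the essential point explicit and avoids the (unnecessary) appeal to properness; the paper's pullback argument is quicker once one accepts that $\C\rightarrow\pi_{0}\C$ is a fibration. Two small remarks: fibrancy of $\C^{'}$ needs only that its mapping spaces are Kan (the $\infty$-groupoid property is not used there), and the identification of fibrations with pseudo-fibrations in $(\sSet,\mathbf{Q})$ is valid between quasi-categories, which is exactly the situation at hand (and is the convention the paper itself uses); the reduction of the $\Top$ case to the simplicial one via $\sing$ matches the paper's implicit treatment.
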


\begin{proof}
Remark that if $\C$ is  fibrant, then $\C\rightarrow\pi_{0}\C$ is a fibration. The Bergner's model structure is right proper so  $\C^{'}\rightarrow \iso~\pi_{0}\C$ is also a fibration. More over, the groupoid  $\iso~\pi_{0}\C$ is fibrant,
 and so $\C^{'}$ is.
Consequently  $\widetilde{\N} \C^{'}\rightarrow \widetilde{\N}\iso~\pi_{0}\C$ is a pseudo-fibration in the category $(\sSet, \mathbf{Q})$, So a pseudo fibration between quasi-categories.

But the category $\pi_{0}\C$ is a  "constant"  simplicial category, so$ \widetilde{\N}\iso~\pi_{0}\C=\N\iso~\pi_{0}\C$. 
We conclude that $\widetilde{\N} \C^{'}\rightarrow \N\iso~\pi_{0}\C$ is a peudo-fibration between quasi-category and a Kan complex, see \ref{groupoide1}.
\end{proof}
Let $X$ a quasi-category, Joyal defined the homotopy category $\Ho(X)$ which is a category in the classical sens. The $0$-simplexes of $X$ form the set of objets of  $\Ho(X)$ and the  $1$-simplexes (modulo the homotopy equivalence) form the morphsims of  $\Ho(X)$. An $1-$simplexe in $X$ is called an weak equivalence
if it is represented in $\Ho(X)$ by an isomorphisme.
\begin{definition}
Let $p: X\rightarrow Y$ a morphism between quasi-categories, and let  $w$ a 1-simplex in $X$, then  $p$ is called conservative if:
$$ p(w)~ \textrm{a weak equivalence in Y } \Rightarrow ~ w ~\textrm{a weak equivalence in X} .$$
\end{definition}
\begin{lemma}(\cite{Joyalcrm}, 4.30)\label{pseudofib}
Let $p: X\rightarrow Y$ a morphism between quasi-categories, such that $p$ is a  pseudo-fibration and conservative. If $Y$ is a Kan complex, then $X$ is.
\end{lemma}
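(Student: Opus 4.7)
The plan is to invoke Joyal's characterization of Kan complexes inside the world of quasi-categories: a quasi-category $X$ is a Kan complex if and only if every $1$-simplex of $X$ is a weak equivalence, equivalently, its homotopy category $\Ho(X)$ is a groupoid. Granted this criterion, the lemma reduces to a one-line chase using conservativity.

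First I would fix an arbitrary $1$-simplex $w$ in $X$ and consider its image $p(w)$ in $Y$. Since $Y$ is a Kan complex (hence in particular a quasi-category whose homotopy category $\Ho(Y)$ is a groupoid), every $1$-simplex of $Y$ is a weak equivalence; in particular $p(w)$ is a weak equivalence in $Y$. Because $p$ is conservative by hypothesis, this forces $w$ itself to be a weak equivalence in $X$. As $w$ was arbitrary, every $1$-simplex of $X$ is a weak equivalence, so $\Ho(X)$ is a groupoid, and the aforementioned criterion then yields that the quasi-category $X$ is a Kan complex.

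The only non-trivial input is the characterization of Kan complexes among quasi-categories, which is precisely the result (due to Joyal) that a quasi-category whose $1$-simplices are all invertible in the homotopy category admits fillers for the outer horns $\Lambda^n_0 \to \Delta^n$ and $\Lambda^n_n \to \Delta^n$ as well. This is a statement about extending outer horns once one has shown that the relevant edges are weak equivalences, and it is a standard fact; I would simply cite \cite{Joyalcrm}. Apart from that citation, the argument is a direct consequence of the definitions of \textbf{conservative} and \textbf{pseudo-fibration}, so there is no real obstacle beyond correctly invoking that criterion.
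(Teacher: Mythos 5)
Your argument is correct, but it is worth noting how it sits relative to the paper: the paper gives no proof at all for this lemma, treating it as a black-box citation to Joyal's notes (statement 4.30 there), whereas you reduce it to a different, more fundamental result of Joyal, namely the characterization that a quasi-category $X$ is a Kan complex if and only if every $1$-simplex of $X$ is invertible in $\Ho(X)$ (equivalently, $\Ho(X)$ is a groupoid); granting that, your chase "$Y$ Kan $\Rightarrow$ every edge of $Y$ is an equivalence $\Rightarrow$ by conservativity every edge of $X$ is an equivalence $\Rightarrow$ $X$ Kan" is complete and consistent with the paper's definition of weak equivalence of $1$-simplices. Two remarks. First, your proof never uses the pseudo-fibration hypothesis; this is not an error, since the statement as given is then simply true under weaker hypotheses, but it signals that in Joyal's formulation the fibration hypothesis is there because the genuine content of his 4.30-type results is the stronger conclusion that $p$ itself is a Kan fibration (from which "$X$ is Kan" follows because $Y$ is), so your version proves exactly what the paper needs (e.g.\ in Lemma \ref{inftykan}) and no more. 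Second, the heavy lifting in your proof is entirely in the cited characterization (Joyal's special outer-horn theorem), so in terms of logical dependence you and the paper both ultimately defer to Joyal; the gain of your route is that it makes transparent which part of Joyal's theory is actually being used and shows the conservativity hypothesis is the essential one here.
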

\begin{lemma}\label{inftykan}
Let $\C\in \Cat_{\sSet}$ fibrant, then $\widetilde{N}\C^{'}$ is a Kan complex, where $\C^{'}$ is the $\infty$-groupoid associated  to $\C$.
\end{lemma}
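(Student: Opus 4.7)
The plan is to combine Corollary \ref{corinfini} with Lemma \ref{pseudofib}. By Corollary \ref{corinfini} we have a pseudo-fibration $p \colon \widetilde{\N}\C^{'}\to \N\iso\pi_{0}\C$ between quasi-categories, and by Lemma \ref{groupoide1} (applied to the groupoid $\iso\pi_{0}\C$) the target $\N\iso\pi_{0}\C$ is a Kan complex. So, once we show $p$ is conservative, Lemma \ref{pseudofib} will immediately give that $\widetilde{\N}\C^{'}$ is a Kan complex.

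First, I would assemble the input: observe that $\C$ fibrant implies $\C^{'}$ is fibrant (right properness of Bergner's model structure, as used inside the proof of Corollary \ref{corinfini}), so $\widetilde{\N}\C^{'}$ is a quasi-category by Theorem \ref{equivalence}, and similarly $\N\iso\pi_{0}\C$ is a quasi-category since it is the nerve of an ordinary category. Then cite Corollary \ref{corinfini} to get the pseudo-fibration and Lemma \ref{groupoide1} to get that the base is a Kan complex.

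The step requiring a short argument is conservativity of $p$. The key observation is that the homotopy category of the coherent nerve $\widetilde{\N}\C^{'}$ is precisely $\pi_{0}\C^{'}$; but by construction of the associated $\infty$-groupoid in the definition preceding Lemma \ref{groupoide13}, the projection $\pi_{0}\C^{'}\to \iso\pi_{0}\C$ is an isomorphism of categories (the fully-faithful/essentially-surjective argument the author already spelled out). In particular every morphism of $\Ho(\widetilde{\N}\C^{'})$ is invertible, so every $1$-simplex of $\widetilde{\N}\C^{'}$ is a weak equivalence in Joyal's sense. Conservativity of $p$ is then vacuous: the hypothesis ``$p(w)$ is a weak equivalence'' need not even be invoked, since every $w$ is already a weak equivalence.

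Having verified the two hypotheses of Lemma \ref{pseudofib} — $p$ is a pseudo-fibration between quasi-categories, $p$ is conservative, and the target is a Kan complex — the lemma gives that $\widetilde{\N}\C^{'}$ is a Kan complex, as required. I expect the only delicate point to be the identification $\Ho(\widetilde{\N}\C^{'}) = \pi_{0}\C^{'}$, which is standard for coherent nerves of fibrant simplicial categories and justifies the conservativity step; everything else is just plugging in the lemmas already proved above.
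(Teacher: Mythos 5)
Your proposal is correct and follows essentially the same route as the paper: Corollary \ref{corinfini} gives the pseudo-fibration $\widetilde{\N}\C^{'}\rightarrow \N\iso~\pi_{0}\C$ onto a Kan complex, and Lemma \ref{pseudofib} finishes once conservativity is checked, which both you and the paper reduce to the fact that every morphism of $\C^{'}$ is a weak equivalence by construction (you phrase this via $\Ho(\widetilde{\N}\C^{'})\cong\pi_{0}\C^{'}$, the paper says directly that all $0$-simplices of $\Map_{\C^{'}}(a,b)$ are weak equivalences). Your slightly more detailed justification of the conservativity step is a fair expansion of the paper's ``evident fact,'' not a different argument.
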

\begin{proof}
We have seen by the corollary \ref{corinfini} that if $\C$ is fibrant, then  $\widetilde{\N} \C^{'}\rightarrow \N\iso~\pi_{0}\C$ is a pseudo-fibration between quasi-categories,
 and $\N\iso~\pi_{0}\C$ is a Kan complex.
We must verify that the morphism is conservative, which is an evident fact because all 0-simplices of  $\Map_{\C^{'}}(a,b)$ are weak equivalences by definition.
By the lemma \ref{pseudofib}, we conclude that  $\widetilde{\N} \C^{'}$ is a Kan complex.
\end{proof}
In \cite{Joyalcrm} (Theorem 4.19) , Joyal construct an adjunction between Kan complexes and quasi-categories.
If we note by $\mathbf{Kan}$
 The full subcategory of $\sSet$ of Kan complexes, and by 
$\mathbf{QCat}$  The full subcategory of $\sSet$ of quasi-categories, then the inclusion $\mathbf{Kan}\subset\mathbf{QCat}$ admit a right adjoint noted by $\mathrm{J}$. 
The functor can be interpreted as follow:
for each quasi-category $X$, $\mathrm{J}(X)$ is the quasi-groupoid associated to $X$, and if $X$ is a Kan complex, then $\mathrm{J}(X)=X$.
\begin{lemma}
Let  $X$ a quasi-category (a fibrant object) in $(\sSet,\mathbf{Q})$. The natural transformation $\beta_{X}: k^{!}(X)\rightarrow X$ is factored by
 $ \beta_{X}:k^{!}(X)\rightarrow \mathrm{J}(X)\subset X.$
More over,  $ \beta_{X}:k^{!}(X)\rightarrow \mathrm{J}(X)$ is a trivial Kan fibration.
\end{lemma}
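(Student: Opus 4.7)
The proof has two parts: the factorization $\beta_X : k^!(X) \to \mathrm{J}(X) \hookrightarrow X$, and showing that the resulting map $k^!(X) \to \mathrm{J}(X)$ is a trivial Kan fibration.

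For the factorization, let $f \in k^!(X)_n$ be an $n$-simplex; by definition $f$ is a map $f : \widetilde{\Delta^n} \to X$. Its image under $\beta_X$ is the composite $\Delta^n \hookrightarrow \widetilde{\Delta^n} \xrightarrow{f} X$ obtained by restricting along the inclusion $\Delta^n \subset \widetilde{\Delta^n}$. Since $\widetilde{\Delta^n}$ is the nerve of a free groupoid, every $1$-simplex in it represents an isomorphism in $\Ho(\widetilde{\Delta^n})$. The induced functor $\Ho(f) : \Ho(\widetilde{\Delta^n}) \to \Ho(X)$ preserves isomorphisms, so every edge of $\beta_X(f)$ is a weak equivalence in the quasi-category $X$. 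Because $\mathrm{J}(X)$ is the maximal Kan subcomplex of $X$, characterized as the subcomplex whose simplices have all edges weakly invertible, we conclude $\beta_X(f) \in \mathrm{J}(X)_n$.

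For the trivial fibration property, I check the right lifting property of $\beta_X : k^!(X) \to \mathrm{J}(X)$ against every boundary inclusion $\partial\Delta^n \hookrightarrow \Delta^n$. A lifting problem consists of maps $u : \partial\Delta^n \to k^!(X)$ and $v : \Delta^n \to \mathrm{J}(X)$ that are compatible under $\beta_X$. Via the adjunction $k_! \dashv k^!$ and the inclusion $\mathrm{J}(X) \hookrightarrow X$, this is equivalent to the extension problem: given a map $\varphi : k_!(\partial\Delta^n) \cup_{\partial\Delta^n} \Delta^n \to X$ whose restriction to $\Delta^n$ lands in $\mathrm{J}(X)$, extend it to $\widetilde{\Delta^n} = k_!(\Delta^n) \to X$.

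The extension is built by exploiting that $\widetilde{\Delta^n}$ arises from $\Delta^n$ by formally inverting every edge and filling the resulting higher coherences. Because the restriction of $\varphi$ to $\Delta^n$ factors through $\mathrm{J}(X)$, each edge of $\Delta^n$ is a weak equivalence in $X$. In a quasi-category, inner horns always admit fillers, and an outer horn $\Lambda^n_0$ or $\Lambda^n_n$ admits a filler whenever its distinguished edge is a weak equivalence. Consequently the formal inverse edges and all higher simplices of $\widetilde{\Delta^n}$ not already in $k_!(\partial\Delta^n) \cup \Delta^n$ can be produced inductively by appropriate horn fillings in $X$, with the boundary constraints already encoded by the $k_!(\partial\Delta^n)$-component of $\varphi$. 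Assembling these fillings yields the required extension $\widetilde{\Delta^n} \to X$, hence the desired lift. The main obstacle is the combinatorial verification that the inclusion $k_!(\partial\Delta^n) \cup_{\partial\Delta^n} \Delta^n \hookrightarrow \widetilde{\Delta^n}$ can be built, under the constraint that the $\Delta^n$-part lands in $\mathrm{J}(X)$, from pushouts of inner horns and of the outer horns whose distinguished edges are already mapped to weak equivalences in $X$; carrying this out rigorously ultimately rests on Joyal's analysis of localizations of quasi-categories at equivalences in \cite{Joyalcrm}.
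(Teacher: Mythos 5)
The paper's own ``proof'' of this lemma is nothing but the citation to \cite{Joyalcrm} (Proposition 6.26), so any real argument you supply is automatically a different route. Your first half is correct and complete: an $n$-simplex of $k^{!}(X)$ is a map $\widetilde{\Delta^{n}}\to X$, every edge of $\widetilde{\Delta^{n}}$ is invertible in $\Ho(\widetilde{\Delta^{n}})$ because it is the nerve of a groupoid, hence every edge of $\beta_{X}(f)$ is an equivalence in $X$, and $\mathrm{J}(X)$ is exactly the subcomplex of simplices all of whose edges are equivalences; so $\beta_{X}$ factors through $\mathrm{J}(X)$. The transposition of the lifting problem against $\partial\Delta^{n}\subset\Delta^{n}$ into the extension problem along $k_{!}(\partial\Delta^{n})\cup_{\partial\Delta^{n}}\Delta^{n}\hookrightarrow k_{!}(\Delta^{n})=\widetilde{\Delta^{n}}$ is also fine.

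The gap is exactly where you flag it: the assertion that this relative inclusion is assembled from pushouts of inner horns and of outer horns whose distinguished edge is already an equivalence is the entire content of the statement, and it is not a quotable off-the-shelf lemma in the form you use; moreover the induction is delicate, since the formally inverted edges of $\widetilde{\Delta^{n}}$ are only known to map to equivalences after they have been produced as fillers, so ``assembling these fillings'' needs the special/marked anodyne machinery rather than a remark. A cleaner finish, using only facts already quoted in this paper, avoids horn combinatorics altogether: every edge of $k_{!}(\partial\Delta^{n})\cup_{\partial\Delta^{n}}\Delta^{n}$ is sent to an equivalence of $X$ (edges from $k_{!}(\partial\Delta^{n})$ because they are invertible in a groupoid nerve, edges of $\Delta^{n}$ because $v$ lands in $\mathrm{J}(X)$), so the whole map factors through $\mathrm{J}(X)$, which is a Kan complex since $X$ is a quasi-category; on the other hand the inclusion $k_{!}(\partial\Delta^{n})\cup_{\partial\Delta^{n}}\Delta^{n}\hookrightarrow\widetilde{\Delta^{n}}$ is a monomorphism and a weak homotopy equivalence (its target is the nerve of a contractible groupoid, and its source is weakly contractible because $\alpha_{\partial\Delta^{n}}\colon\partial\Delta^{n}\to k_{!}(\partial\Delta^{n})$ is a weak homotopy equivalence and the pushout along the cofibration $\partial\Delta^{n}\subset\Delta^{n}$ is a homotopy pushout), hence anodyne; ordinary Kan lifting into $\mathrm{J}(X)$ then produces the required extension, and its restriction to $\Delta^{n}$ is $v$ by construction. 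With that replacement your outline becomes a complete proof resting only on the two inputs already cited here: $\mathrm{J}(X)$ is Kan for a quasi-category $X$, and $\alpha$ is a weak homotopy equivalence.
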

\begin{proof}
See \cite{Joyalcrm}, proposition 6.26.
\end{proof}
\begin{corollary}\label{groupoid2}
Let a fibrante category $\C\in\Cat_{\sSet}$, and $G \C$ the associated $\infty$-groupoid. 
Then $k^{!}\widetilde{\N}(\C)\rightarrow \widetilde{\N}(G\C)$ is a trivial Kan fibration.
\end{corollary}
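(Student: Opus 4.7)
The strategy is to assemble three earlier results into a short chain of identifications and apply the proposition about $\beta_X$.

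First I would invoke the lemma (just above) which asserts that for $\C$ fibrant one has the equality $k^{!}\widetilde{\N}\C = k^{!}\widetilde{\N}(G\C)$. This reduces the claim to showing that the natural map $k^{!}\widetilde{\N}(G\C) \to \widetilde{\N}(G\C)$ is a trivial Kan fibration. Note that this map is precisely $\beta_{X}$ for $X = \widetilde{\N}(G\C)$, via the natural transformation $\beta : k^{!} \to \mathrm{id}$ described in the $(\sSet,\mathbf{Q})$ vs $(\sSet,\mathbf{K})$ section.

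Next, I would check that $X = \widetilde{\N}(G\C)$ is a quasi-category, so that the preceding lemma applies. This follows since $G\C$ is fibrant in $\Cat_{\sSet}$ (a consequence of the right properness argument used in the proof of Corollary \ref{corinfini}), so $\widetilde{\N}(G\C)$ is fibrant in $(\sSet,\mathbf{Q})$ by the Quillen equivalence of Theorem \ref{equivalence}; in particular, it is a quasi-category. Better yet, Lemma \ref{inftykan} gives the stronger statement that $\widetilde{\N}(G\C)$ is in fact a Kan complex.

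Now I would apply the preceding lemma to $X = \widetilde{\N}(G\C)$. Since $X$ is a Kan complex, the functor $\mathrm{J}$ acts as the identity on it, so $\mathrm{J}(X) = X$; hence the factorization $\beta_{X} : k^{!}(X) \to \mathrm{J}(X) \subset X$ reduces to $\beta_{X} : k^{!}(X) \to X$, and that map is asserted to be a trivial Kan fibration. Combining this with the initial identification yields that $k^{!}\widetilde{\N}(\C) = k^{!}\widetilde{\N}(G\C) \to \widetilde{\N}(G\C)$ is a trivial Kan fibration, which is what we wanted.

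The only delicate point — and really the main thing to verify — is the hypothesis that $\widetilde{\N}(G\C)$ is a quasi-category (equivalently, a Kan complex), needed to unlock both the preceding lemma and the identification $\mathrm{J}(X) = X$; but this is precisely Lemma \ref{inftykan}, so no genuine new work is required and the corollary is essentially a syllogism from results already in the paper.
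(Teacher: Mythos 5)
Your proposal is correct and follows essentially the same route as the paper's own proof: identify $k^{!}\widetilde{\N}(\C)$ with $k^{!}\widetilde{\N}(G\C)$ using the preceding lemma for fibrant $\C$, apply Joyal's result that $\beta_{X}\colon k^{!}(X)\rightarrow \mathrm{J}(X)$ is a trivial Kan fibration with $X=\widetilde{\N}(G\C)$, and use that $\widetilde{\N}(G\C)$ is a Kan complex (Lemma \ref{inftykan}) to get $\mathrm{J}(X)=X$. Your explicit identification of the map as $\beta_{X}$ and the fibrancy check for $G\C$ only make more precise what the paper leaves implicit.
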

\begin{proof}
Since $\C$ is fibrant, we have seen that $k^{!}\widetilde{\N}(\C)= k^{!}\widetilde{\N}(G\C)$, and by the precedent  lemma  $ k^{!}\widetilde{\N}(G\C)\rightarrow \mathrm{J}(\widetilde{\N}(G\C)) $ is a trivial Kan fibration.
But $\widetilde{\N}(G\C)$ is a Kan complex, since  $G\C$ is a fibrant  $\infty$-groupoid, so $\mathrm{J}(\widetilde{\N}(G\C))=\widetilde{\N}(G\C).$
\end{proof}
Now, we can see the analogy between $\N\iso$ in the case of calssical categories and  the funclor $k^{!}\widetilde{\N}$ in the case of enriched categories over $\sSet$. In fact, if $\C$ is a classical category, then the functor  $\iso$ sends $\C$ to its associated groupoid  $G\C$ and so $\N\iso \C= \N G\C.$ If $\C$ is a category enriched over Kan complexes,( i.e., $\C$ is fibrant in Bergner's model structure),  then the simplicial set $ k^{!}\widetilde{\N}\C$
is equivalent to $ \widetilde{\N} G\C$ by the corollary  \ref{groupoid2}.\\

 
 \section{mapping space}
 The goal of this section is to describe the mapping space of the model category $\Cat_{\Top}$. Before making progress in this direction, we need some introduction to different model on $\sSet$.
 \begin{notation}
 We will note the category of simplicial sets with Kan model structure by $(\sSet,\mathbf{K})$.
 The Joyal model structure of quasi-categories will be noted by  $(\sSet,\mathbf{Q})$.
 \end{notation} 
 
  \begin{theorem}\label{adjmap}[\cite{DH2010}, theorem 2.12.]
   Let a Quillen adjunction of Quillen model categories :

   $$\xymatrix{
\C \ar@<1ex>[r]^-{G} & \D. \ar@<1ex>[l]^-{F} 
}$$
The there is a natural isomorphisme
$$ \map_{\C}(a,\mathrm{R}Fb)\rightarrow\map_{\D}(\mathrm{L}Ga,b)$$
in  $\Ho(\sSet)$
\end{theorem}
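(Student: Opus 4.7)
The plan is to reduce Theorem \ref{adjmap} to the ordinary adjunction isomorphism between hom-sets by computing both homotopy mapping spaces via suitable Reedy resolutions, using the fact that $G$ is a left Quillen functor and $F$ is a right Quillen functor in the adjunction $G \dashv F$.

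First, I would recall that in any model category $\M$, the homotopy mapping space $\map_{\M}(x,y) \in \Ho(\sSet)$ can be modeled as the bisimplicial diagonal of $\mathrm{Hom}_{\M}(Q_{\bullet}x, R y)$, where $Q_{\bullet}x \to x$ is a Reedy cofibrant cosimplicial resolution of $x$ and $y \to Ry$ is a fibrant replacement. This construction is homotopically well-defined: any two such resolutions produce weakly equivalent simplicial sets, and weak equivalences in either variable (between cofibrant or fibrant objects respectively) induce weak equivalences on $\map$.

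Next, fix a Reedy cofibrant cosimplicial resolution $Q_{\bullet}a \to a$ in $\C$ and a fibrant replacement $b \to Rb$ in $\D$. The ordinary adjunction $\mathrm{Hom}_{\C}(-,F(-)) \cong \mathrm{Hom}_{\D}(G(-),-)$ assembles degreewise into an isomorphism of simplicial sets
\begin{equation*}
\mathrm{Hom}_{\C}(Q_{\bullet}a,\, F(Rb)) \;\cong\; \mathrm{Hom}_{\D}(G(Q_{\bullet}a),\, Rb).
\end{equation*}
I then identify each side with one of the mapping spaces in the statement. On the left, since $F$ is right Quillen and $Rb$ is fibrant, $F(Rb)$ is a fibrant model for $\mathrm{R}F b$, so this simplicial set represents $\map_{\C}(a,\mathrm{R}Fb)$. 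On the right, $Rb$ is fibrant and $G(Q_{\bullet}a)$ needs to be a Reedy cofibrant cosimplicial resolution of $\mathrm{L}Ga = G(Q a)$ in $\D$.

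The main obstacle is the verification of this last point: that $G$ sends a Reedy cofibrant resolution to a Reedy cofibrant resolution. Since $G$ is a left adjoint it commutes with the colimits defining the latching objects, so $L_n(G Q_{\bullet}a) \cong G(L_n Q_{\bullet}a)$; since $G$ is left Quillen it preserves the cofibrations $L_n Q_{\bullet}a \hookrightarrow Q_n a$, so $G(Q_{\bullet}a)$ is Reedy cofibrant. Moreover, by Ken Brown's lemma, $G$ preserves weak equivalences between cofibrant objects, so $G(Q_{\bullet}a) \to G(Qa)$ is a levelwise weak equivalence, exhibiting $G(Q_{\bullet}a)$ as a resolution of $\mathrm{L}Ga$. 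Therefore the right-hand side computes $\map_{\D}(\mathrm{L}G a, b)$, and the naturality of the hom-adjunction promotes the resulting weak equivalence to a natural isomorphism in $\Ho(\sSet)$, as required. For full details I would refer, as the paper does, to Theorem 2.12 of \cite{DH2010}.
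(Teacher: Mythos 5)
The paper offers no internal proof of this statement: it is imported directly from \cite{DH2010} (Theorem 2.12), so the only comparison available is with the standard argument that reference supplies, and your proposal is essentially that argument, correctly assembled. The key ingredients are all right: the degreewise adjunction isomorphism $\mathrm{Hom}_{\C}(Q_{\bullet}a,F(Rb))\cong\mathrm{Hom}_{\D}(G(Q_{\bullet}a),Rb)$, fibrancy of $F(Rb)$ as a model for $\mathrm{R}Fb$ because $F$ is right Quillen, preservation of Reedy cofibrancy by $G$ via commutation with the colimits defining latching objects, and Ken Brown's lemma for the levelwise equivalences. Two points of hygiene are worth fixing. First, $\mathrm{Hom}_{\C}(Q_{\bullet}a,F(Rb))$ is already a simplicial set, not a bisimplicial one requiring a diagonal; the diagonal only appears if you also choose a simplicial resolution of the target (two-sided function complexes), and either convention works but you should not mix them. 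Second, for Ken Brown's lemma you need the coaugmentation to consist of maps between cofibrant objects, so take $Q_{\bullet}a$ to be a cosimplicial resolution of a cofibrant replacement $Qa$ (or compare via the codegeneracies to $Q_{0}a$), so that $G(Q_{\bullet}a)\to G(Qa)$ is a levelwise weak equivalence exhibiting a resolution of $\mathrm{L}Ga$; similarly, the claimed naturality in $a$ and $b$ needs functorial resolutions (available here, e.g.\ by cofibrant generation) or an argument that the comparison maps are natural in $\Ho(\sSet)$. With these small repairs your proof is complete and coincides with the route of the cited theorem.
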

 
 \subsection{Mapping space in $\Cat_{\Top}$ and $\Cat_{\sSet}$}
In this paragraph, we compute $\map$ for the model categories $\Cat_{\sSet}$ and $\Cat_{\Top}$. \\
Suppose that $\C$ is a small enriched category on $\Top$. We define the coherent nerve of  $\C$ by $\widetilde{\N}\sing\C$, and we define the corresponding $\infty$- groupoid $\C^{'}$ by

  $$
  \xymatrix{
   G\C=\iso~\pi_{0}\C\times_{\pi_{0}\C} \C \ar[r]\ar[d]  & \C \ar[d] \\
    \iso~\pi_{0}\C \ar[r]& \pi_{0}\C
  }
  $$
  By applying the functor $\sing$ to this diagram, we obtain also a pullbak diagram since $\sing$ since it is a right adjoint. We note that
  $ \sing~ \pi_{0}\C=\pi_{0}\sing~ \C=\pi_{0}\C$ and $\sing~ \iso \pi_{0}\C=\iso~\pi_{0}\C=\iso~\pi_{0}\sing\C$
    $$\xymatrix{
   G~\sing~\C=\sing(\iso\pi_{0}\C\times_{\pi_{0}\C} \C )\ar[r]\ar[d]  &  \sing~\C \ar[d] \\
    \sing ~\iso~\pi_{0}\C \ar[r]&  \sing~\pi_{0}\C
  }
  $$
We conclude that $$\sing ~G\C = G~\sing~\C.$$
  More over $k^{!}~\widetilde{\N}~\sing ~\C$ is weak equivalent to $\widetilde{\N}~\sing ~G\C$. 
The homotopy type of the mapping space  $\map_{\Cat_{\Top}}(\ast, \C)$ is computed easily using the theorem \ref{adjmap},
and the adjunction
$$\xymatrix{
\sSet \ar@<1ex>[r]^-{\Xi k_{!}} & \Cat_{\sSet}. \ar@<1ex>[l]^-{k^{!}\widetilde{\N}} 
}$$
We conclude that for every  (fibrant) small category enriched on $\sSet$,  we have the following isomorphism in $\Ho(\sSet)$
$$k^{!}\widetilde{\N}\C\sim\map_{\sSet}(\ast, k^{!}\widetilde{\N}\C)\sim\map_{\Cat_{\sSet}}(\ast,\C)$$
and by the same wa, if $\D$ is a small category enriched on $\Top$, then
$$\map_{\Cat_{\Top}}(\ast,\D)\sim k^{!}\widetilde{\N}\sing\D.$$

by the corollary \ref{groupoid2}, we conclude that
$$\map_{\Cat_{\sSet}}(\ast,\C)\sim  \widetilde{\N}G\C.$$
et 
$$\map_{\Cat_{\Top}}(\ast,\D)\sim \widetilde{\N}G\sing\D.$$
In the classical setting of $\Cat$, we know  that $\map_{\Cat}(\A,\B)\sim\N\iso \HOM_{\Cat}(\A,\B)$. If $\A$ is the terminal category $\ast$, then  $\map_{\Cat}(\ast,\B)\sim\N\iso\B$.
More generally, we have that:
$$\map_{\Cat_{\Top}}(|\Xi k_{!}(A)|, \C)\sim \map_{\sSet}(A,k^{!}\widetilde{\N}\sing\C)\sim\Map(A,\widetilde{\N}G\sing\C),$$
where $\Map$ is the right adjoint functor to the cartesian product in $\sSet.$
 Now,the similarity between $\Cat$ and $\Cat_{\sSet}$ is evident.\\

 \section{localization}
 In this paragraph we show how to construct localization for a topological category with respect to a morphsim
 or a set of morphisms. In the classical setting of small categories we know how to define the localization in a functorial way. The idea is quite simple, let $\C\in \Cat$ and $f$ a morphism in $\C$, we want to define a funclor  $\C\rightarrow\mathrm{L}_{f}\C$ and having the following universal property: if $F:\C\rightarrow\D$ is a functor such that $F(f)$ is an isomorpism in $\D$ then there is a unique factorization of $F$ as 
 $$\C\rightarrow\mathrm{L}_{f}\C\rightarrow\D.$$ 
 \begin{notation}
 In this section, the category with two objects $x$ and $y$ and with one non trivial morphism from $x$ to $y$ 
 will be denoted $\A$.\\
 The category with the same objects  $x$ and $y$ and an isomorphism from $x$ to $y$ (resp. 
 from $y$ to $x$ ) will be denoted  $\B$.
 \end{notation}
 
 \begin{lemma}
 The category $\mathrm{L}_{f}\C$ is isomorphic to following pushout in $\Cat$:
  $$
  \xymatrix{
  \A \ar[r]^{f}\ar[d]_{inc}  & \C \ar[d]^{i} \\
     \B \ar[r]& \M
  }
  $$
Where $inc$ is the evident inclusion and $f$ sends the unique arrow in $\A$ to the morphism $f$ in $\C$. 
 \end{lemma}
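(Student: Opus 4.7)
The plan is to verify that the pushout $\M$ satisfies the universal property that characterizes $\mathrm{L}_{f}\C$, and then conclude by uniqueness of objects defined by a universal property in $\Cat$.

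First I would unpack the universal property of the pushout in $\Cat$: a functor $\M \rightarrow \D$ is the same data as a pair $(F,G)$ consisting of a functor $F: \C \rightarrow \D$ and a functor $G: \B \rightarrow \D$ whose restrictions along the two maps out of $\A$ coincide. Next I would observe that, because $\B$ is the free groupoid on one invertible arrow $x \rightarrow y$, a functor $G : \B \rightarrow \D$ is precisely a choice of an isomorphism $G(x) \rightarrow G(y)$ in $\D$. The compatibility condition with $F$ then says exactly that this isomorphism must equal $F(f)$; in particular $F(f)$ must be invertible. Conversely, any $F: \C \rightarrow \D$ with $F(f)$ invertible determines a unique such $G$, by sending the generating arrow of $\B$ to the isomorphism $F(f)$ and its inverse to $F(f)^{-1}$.

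Putting this together, functors $\M \rightarrow \D$ are in natural bijection with functors $F: \C \rightarrow \D$ such that $F(f)$ is an isomorphism, and the bijection is given by precomposition with the canonical map $i: \C \rightarrow \M$ from the pushout. This is exactly the universal property of the localization $\mathrm{L}_{f}\C$, so by the Yoneda lemma there is a unique isomorphism $\M \cong \mathrm{L}_{f}\C$ under $\C$.

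There is no genuine obstacle here; the only thing one has to be slightly careful about is to check that $G$ really is determined by specifying the image of the non-trivial arrow of $\B$ (which follows from $\B$ being free as a groupoid on the arrow $x \rightarrow y$) and to verify that the compatibility along $\A$ really does force $G(x\to y) = F(f)$, rather than merely some isomorphism parallel to $F(f)$. Both are routine.
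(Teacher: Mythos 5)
Your proof is correct and follows essentially the same route as the paper: identify functors out of the pushout $\M$ with functors $F:\C\rightarrow\D$ inverting $f$, using that $\B$ is the walking isomorphism, and conclude by the universal property. In fact you are slightly more careful than the paper's own (very terse) argument, since you make explicit both the uniqueness of the factorization and the converse direction of the bijection.
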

 \begin{proof}
 Suppose that we have a functor $F:\C\rightarrow \D$ such that the morphism $f$ is sent to an isomorphism. 
 It induce a functor from $\B\rightarrow \D$. By the pushout property we have a unique functor from $\M$ to $\D$
 which factors the functor $F$. So $\mathrm{L}_{f}\C$ is isomorphic to $\M$.
 \end{proof}
 \begin{corollary}
 For any set $S$ of morphism in $\C$ the category $\mathrm{L}_{S}\C$ exist and it is unique up to isomorphism.
 \end{corollary}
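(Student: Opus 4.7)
The plan is to reduce the case of an arbitrary set $S$ of morphisms to the single-morphism case treated in the previous lemma by taking a single pushout indexed by $S$. Concretely, I would define $\mathrm{L}_{S}\C$ as the pushout
$$
\xymatrix{
\bigsqcup_{f \in S}\A \ar[r]\ar[d] & \C \ar[d] \\
\bigsqcup_{f \in S}\B \ar[r] & \mathrm{L}_{S}\C
}
$$
in $\Cat$, where the top horizontal map is the coproduct of the classifying functors $\A \to \C$ picking out each $f \in S$, and the left vertical map is the coproduct of the inclusions $\A \hookrightarrow \B$. Since $\Cat$ is cocomplete, this pushout exists.

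Next I would verify the universal property. Given any functor $F : \C \to \D$ sending every $f \in S$ to an isomorphism of $\D$, each $F(f)$ extends uniquely to a functor $\B \to \D$ (because $\B$ is the free invertible-arrow category on $\A$), so these assemble into a functor $\bigsqcup_{f \in S}\B \to \D$. Together with $F$ itself, this forms a cocone under the pushout diagram, and the universal property of the pushout delivers the unique $\mathrm{L}_{S}\C \to \D$ factoring $F$. Uniqueness of $\mathrm{L}_{S}\C$ up to isomorphism is then immediate from the usual Yoneda-style argument applied to its universal property.

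Alternatively, one can present $\mathrm{L}_{S}\C$ as an iterated (possibly transfinite) composition of the single-morphism localizations $\mathrm{L}_{f}$ from the previous lemma, well-ordering $S$ and inverting the morphisms one at a time; the two constructions agree up to unique isomorphism by comparing universal properties.

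There is essentially no obstacle here: the main point is simply to observe that the single-morphism construction of the previous lemma is functorial enough to be carried out in parallel over an index set, and the rest is an exercise in manipulating a universal property. The only mild care needed is to track that, when $f$ is iterated, successive pushouts remain well-defined in $\Cat$, which is automatic since $\Cat$ is cocomplete.
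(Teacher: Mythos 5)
Your argument is correct and is exactly the intended one: the paper leaves this corollary without proof, and the natural generalization of the preceding lemma is precisely your single pushout of $\bigsqcup_{f\in S}\A\to\bigsqcup_{f\in S}\B$ along the functor classifying the morphisms of $S$, with existence from cocompleteness of $\Cat$ and uniqueness from the universal property. Your verification of the universal property (using that $\B$ is free on one invertible arrow) is complete, so nothing is missing.
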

 Now, we are interested for the same construction in the enriched setting $\Cat_{\Top}$. The main difference with the classical case is the the existence, we will construct a functorial model for the localization up to homotopy.
 \begin{notation}
 We denote  by $\A^{h}$ the topological category $|\Xi\N\A|$ and by $\B^{h}$ the category $|\Xi\N\B|$
 \end{notation}
 choosing a morphism $f$ in a topological category $\C$ we want to construct a category a category $\mathrm{L}_{f}\C$ with the following property:  given a morphism $F:\C\rightarrow \D$ in $\Cat_{\Top}$ such that 
 $F(f)$ is a weak equivalence in $\D$ then $F$ is factored (unique up to homotopy) as 
 $$\C\rightarrow\mathrm{L}_{f}\C\rightarrow\D$$.
  \begin{lemma}
 The category $\mathrm{L}_{f}\C$ could be taken as  following pushout in $\Cat_{\Top}$:
  $$
  \xymatrix{
  \A^{h} \ar[r]^{f}\ar[d]_{inc}  & \C \ar[d]^{i} \\
     \B^{h} \ar[r]& \M
  }
  $$
  More over $ \pi_{0}\C\rightarrow \mathrm{L}_{\pi_{0}(f)}\pi_{0 }\C$ is a localization in $\Cat$.
  \end{lemma}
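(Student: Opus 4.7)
The plan is to verify two things: the homotopical universal property of the pushout $\M$, and the identification $\pi_{0}\M \cong \mathrm{L}_{\pi_{0}(f)}\pi_{0}\C$.

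For the universal property, suppose $F:\C\rightarrow\D$ in $\Cat_{\Top}$ sends $f$ to a weak equivalence $F(f)$. By the pushout property, factoring $F$ through $\M$ amounts to extending the composite $\A^{h}\xrightarrow{f}\C\xrightarrow{F}\D$ along the inclusion $\A^{h}\hookrightarrow\B^{h}$. Applying the $(|\Xi(-)|,\widetilde{\N}\sing)$-adjunction, this extension is equivalent to extending the $1$-simplex of $\widetilde{\N}\sing\D$ representing $F(f)$ along $\N\A=\Delta^{1}\hookrightarrow k_{!}\Delta^{1}=\N\B$. Since every $\D\in\Cat_{\Top}$ is fibrant, $\widetilde{\N}\sing\D$ is a quasi-category, and because $F(f)$ is a weak equivalence in $\Cat_{\Top}$, the corresponding $1$-simplex is an equivalence in this quasi-category. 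The standard characterization of equivalences in a quasi-category (equivalences are exactly the edges extending along $\Delta^{1}\hookrightarrow k_{!}\Delta^{1}$) then yields the required extension, which transports back along the adjunction to a map $\B^{h}\rightarrow\D$ compatible with $F$, and hence to a map $\M\rightarrow\D$.

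For the identification of $\pi_{0}\M$, recall that $\pi_{0}:\Cat_{\Top}\rightarrow\Cat$ is a left adjoint (its right adjoint is the trivially-enriched inclusion, as noted in the proof that transfinite compositions of weak equivalences are weak equivalences), so it preserves pushouts. A direct computation gives $\pi_{0}\A^{h}=\A$, since $\A^{h}=|\Xi\N\A|$ has discrete mapping spaces (note $\N P_{0,1}=\Delta^{0}$); and $\pi_{0}\B^{h}=\B$, since $\B^{h}$ is a cofibrant replacement of the walking isomorphism and $\pi_{0}$ collapses its contractible hom-spaces to the underlying groupoid. Therefore
$$\pi_{0}\M=\pi_{0}\C\sqcup_{\pi_{0}\A^{h}}\pi_{0}\B^{h}=\pi_{0}\C\sqcup_{\A}\B,$$
which is precisely $\mathrm{L}_{\pi_{0}(f)}\pi_{0}\C$ by the first lemma of this section.

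The main obstacle is making the universal property rigorous up to homotopy, rather than just exhibiting an extension. Existence rests on the quasi-categorical characterization of equivalences recalled above; homotopy-uniqueness of $\M\rightarrow\D$ requires promoting this to a statement about mapping spaces, using the formula $\map_{\Cat_{\Top}}(|\Xi k_{!}(A)|,\D)\sim\Map(A,\widetilde{\N}G\sing\D)$ developed earlier, together with the contractibility of the space of invertibility data for an equivalence in a quasi-category. Once this is in hand, the pushout in $\Cat_{\Top}$ inherits the expected homotopical universal property, justifying the notation $\M=\mathrm{L}_{f}\C$.
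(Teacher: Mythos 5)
Your proposal is correct, and on the key step it takes a genuinely different (though closely related) route from the paper. The paper first factors the induced functor $\A^{h}\rightarrow\D$ through the associated $\infty$-groupoid $G\D$, then solves the extension problem in $(\sSet,\mathbf{K})$: it observes that $\widetilde{\N}\sing G\D$ is a Kan complex (its lemma on nerves of fibrant $\infty$-groupoids) and that $\N\A\rightarrow\N\B$ is a trivial cofibration for the Kan structure, so a lift $\N\B\rightarrow\widetilde{\N}\sing G\D$ exists and transports by adjunction to $\B^{h}\rightarrow G\D\rightarrow\D$. You instead stay with the quasi-category $\widetilde{\N}\sing\D$ itself and invoke Joyal's characterization of equivalences as the edges extending along $\Delta^{1}\hookrightarrow k_{!}\Delta^{1}=\N\B$. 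The content is essentially the same theorem of Joyal in both cases, but the paper's detour through $G\D$ lets it use only the lemmas it has already established ($G$, the Kan-complex lemma, the pseudo-fibration criterion), whereas your version imports the $J$-extension criterion directly and avoids having to check the factorization through $G\D$; either is legitimate, provided you cite the criterion. Two further points of comparison: (i) you actually prove the second assertion, identifying $\pi_{0}\M$ with $\mathrm{L}_{\pi_{0}(f)}\pi_{0}\C$ via the fact that $\pi_{0}$ is a left adjoint hence preserves pushouts, together with $\pi_{0}\A^{h}=\A$ and $\pi_{0}\B^{h}=\B$ (the latter using that $\Xi\N\B\rightarrow\B$ has contractible hom-spaces); the paper states this claim but gives no argument, so this is a genuine addition. (ii) Both treatments leave homotopy-uniqueness of $\M\rightarrow\D$ somewhat implicit; the paper gets it tacitly from the contractibility of the space of lifts of a trivial cofibration against a Kan complex, while you flag the issue and sketch the mapping-space argument via $\map_{\Cat_{\Top}}(|\Xi k_{!}(A)|,\D)\sim\Map(A,\widetilde{\N}G\sing\D)$ — spelling that out would in fact strengthen the paper's own proof.
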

  \begin{proof}
  First, we note that the inclusion $inc$ is a cofibration in $\Cat_{\Top}$. 
  The functor $ \A^{h} \rightarrow \C$ is constructed as follow:
  Let $\A\rightarrow\C$ which sends the only nontrivial morphism of $\A$ to $f\in\C$. It induces a map of simplicial 
  sets $\N\A\rightarrow\widetilde{\N}\sing\C$ and by adjunction a functor $|\Xi\N\A|\rightarrow\C$ which is the functor noted $f:\A^{h}\rightarrow\C$ in the diagram. The functor $inc:\A^{h}\rightarrow\B^{h}$ is induced by the functor $inc:\A\rightarrow\B$. Now suppose that we have a functor $\C\rightarrow \D$ which sends $f$ to a weak equivalence in $\D$. The induced functor $\A^{h}\rightarrow \D$ factors by $\A^{h}\rightarrow G\D\rightarrow \D$
  where $G\D$ is the associated groupoid of $\D$ as seen in previews section.\\
  Consider the diagram:
   $$
  \xymatrix{
  \A^{h} \ar[r]\ar[d]^{inc}  & G\D \ar[d]^{i} \\
     \B^{h} \ar[r]& \star
  }
  $$
   and using the adjunctions we have a corresponding diagram in $\sSet$
    $$
  \xymatrix{
  \N\A \ar[r]\ar[d]^{inc'} & \widetilde{\N}\sing G\D \ar[d]^{i'} \\
     \N\B \ar[r]& \star
  }
  $$
  But now $\sing G\D$ is a Kan complex see \ref{inftykan} and $inc'$ is a trivial cofibration in $\sSet$, so there exist a lifting (not unique) $ \N\B\rightarrow\sing G\D$. By adjunction we have a lifting $\B^{h}\rightarrow G\D\rightarrow \D$. So we can define unique morphism (up to homotopy) $\M\rightarrow \D$ and any functor $\C\rightarrow\D$ as before factors (uniquely up to homotopy) by $\C\rightarrow\M\rightarrow \D$. So a functorial model for $\mathrm{L}_{f}\C$ is $\M$ and the localisation map $\C\rightarrow\mathrm{L}_{f}\C$ is a cofibration and in fact an inclusion of enriched categories.
  \end{proof}
 \begin{corollary}
 For any set $S$ of morphism in a topological category $\C$, the topological category $\mathrm{L}_{S}\C$ exist and it is unique up to homotopy. More over the localization map $\C\rightarrow\mathrm{L}_{S}\C$ is a cofibration.
 \end{corollary}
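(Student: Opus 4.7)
The plan is to generalize the preceding pushout construction from a single morphism to the whole set $S$ by taking a single coproduct indexed by $S$. Concretely, I would define $\mathrm{L}_{S}\C$ as the pushout
$$
\xymatrix{
\bigsqcup_{s\in S}\A^{h} \ar[r]^-{\bigsqcup s}\ar[d]_-{\bigsqcup inc} & \C \ar[d]^{i} \\
\bigsqcup_{s\in S}\B^{h} \ar[r] & \mathrm{L}_{S}\C
}
$$
in $\Cat_{\Top}$, where the top map is the coproduct of the classifying morphisms $s:\A^{h}\to\C$ associated to each element of $S$ (built exactly as in the single-morphism lemma), and the left map is the coproduct of the inclusions $inc:\A^{h}\to\B^{h}$.

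The cofibration statement is then immediate: each $\A^{h}\to\B^{h}$ is a cofibration in $\Cat_{\Top}$, cofibrations in a cofibrantly generated model category are stable under coproducts, and pushouts preserve cofibrations. Hence the localization map $i:\C\to\mathrm{L}_{S}\C$ is a cofibration, and in fact an inclusion of enriched categories, just as in the single-morphism case.

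For the universal property, suppose $F:\C\to\D$ sends every $s\in S$ to a weak equivalence in $\D$. Arguing as in the preceding lemma, the composite $\bigsqcup_{s\in S}\A^{h}\to\C\to\D$ factors through the associated $\infty$-groupoid $G\D$, and the extension problem over $\bigsqcup_{s\in S}\B^{h}$ transports via the adjunction $|\Xi|\dashv\widetilde{\N}\sing$ to the lifting problem
$$
\xymatrix{
\bigsqcup_{s\in S}\N\A \ar[r]\ar[d] & \widetilde{\N}\sing G\D \\
\bigsqcup_{s\in S}\N\B \ar@{.>}[ru] &
}
$$
in $(\sSet,\mathbf{K})$. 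Since trivial cofibrations in the Kan model structure are stable under coproducts and $\widetilde{\N}\sing G\D$ is a Kan complex by lemma \ref{inftykan}, a lift exists. The pushout property of $\mathrm{L}_{S}\C$ then yields a morphism $\mathrm{L}_{S}\C\to\D$ factoring $F$, and that factorization is unique up to homotopy because the chosen lift is.

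The main subtlety is exactly the one already present for a single morphism: the lift in $(\sSet,\mathbf{K})$ is only unique up to homotopy, so $\mathrm{L}_{S}\C$ is correspondingly only determined up to homotopy equivalence. Once one accepts this weakened uniqueness, the argument for an arbitrary $S$ is a direct coproduct-level upgrade of the single-morphism construction, with no genuinely new obstacle. An alternative would be to iterate the single-morphism construction (possibly transfinitely) and invoke that compositions of cofibrations are cofibrations; the coproduct pushout above is cleaner and gives the cofibration property in one step.
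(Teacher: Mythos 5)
Your proposal is correct and is exactly the argument the paper intends: the corollary is stated without proof as the immediate generalization of the single-morphism lemma, and your coproduct pushout $\bigsqcup_{s\in S}\A^{h}\to\bigsqcup_{s\in S}\B^{h}$ glued along the classifying maps, together with stability of (trivial) cofibrations under coproducts and pushouts and the same lifting argument against the Kan complex $\widetilde{\N}\sing G\D$, is the expected proof. No genuinely different route or new obstacle is involved, so nothing further is needed.
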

 























\bibliographystyle{plain} 
\bibliography{Cat-Top}

\begin{thebibliography}{10}

\bibitem{Amrani}
I.~Amrani.
\newblock Categories simpliciales enrichies et k-theorie de waldhausen.
\newblock {\em PhD thesis}, 2010.

\bibitem{Amrani1}
I.~Amrani.
\newblock Model structure the category of strict (symmetric) monoidal enriched
  categories.
\newblock {\em in preparation}, 2011.

\bibitem{bergner}
J.E. Bergner.
\newblock {A model category structure on the category of simplicial
  categories}.
\newblock {\em Transactions-American Mathematical Society}, 359(5):2043, 2007.

\bibitem{DH2010}
W.G. Dwyer and K.~Hess.
\newblock {Long knots and maps between operads}.
\newblock {\em Arxiv preprint arXiv:1006.0874}, 2010.

\bibitem{dwyer1980}
W.G. Dwyer and D.M. Kan.
\newblock {Simplicial localizations of categories}.
\newblock {\em J. Pure Appl. Algebra}, 17(3):267--284, 1980.

\bibitem{EKMM}
AD~Elmendorf, I.~Kriz, MA~Mandell, JP~May, and M.~Cole.
\newblock {\em {Rings, modules, and algebras in stable homotopy theory}}.
\newblock American Mathematical Society, 2007.

\bibitem{goerss1999}
P.G. Goerss and JF~Jardine.
\newblock {\em {Simplicial homotopy theory}}.
\newblock Birkhauser, 1999.

\bibitem{Hovey}
M.~Hovey.
\newblock Model categories.
\newblock {\em Mathematical Surveys and Monographs}, 63, 1999.

\bibitem{Joyalcrm}
A.~Joyal.
\newblock Advanced course on simplicial methods in higher categories.
\newblock {\em CRM}, 2008.

\bibitem{Mac}
S.~Mac Lane.
\newblock Categories for the working mathematician.
\newblock {\em Graduate Texts in Mathematics}, 1969.

\bibitem{lurie}
J.~Lurie.
\newblock {Higher topos theory}.
\newblock {\em Arxiv preprint math.CT/0608040}, 2006.

\bibitem{segal1974}
G.~Segal.
\newblock {Categories and cohomology theories}.
\newblock {\em Topology}, 13(3):293--312, 1974.

\bibitem{worytkiewicz2007model}
K.~Worytkiewicz, K.~Hess, P.E. Parent, and A.~Tonks.
\newblock {A model structurea la Thomason on 2-Cat}.
\newblock {\em J. Pure Appl. Algebra}, 208(1):205--236, 2007.

\end{thebibliography}

\end{document}